\documentclass[11pt]{amsart}

\usepackage[marginratio=1:1,height=600pt,width=374pt,tmargin=110pt]{geometry}

\usepackage{amsthm}
\usepackage{breqn}
\usepackage{amsmath}
\usepackage{amsfonts}
\usepackage{amssymb}
\usepackage{tikz}
\usepackage{hyperref}
\usepackage{microtype} 
\usepackage{mathtools}
\usepackage{mathrsfs}
\usepackage{graphicx}
\usepackage{stmaryrd}
\usepackage{cleveref}

\newcommand{\G}{\mathscr{G}}
\newcommand{\R}{\mathbb{R}} 
\newcommand{\e}{\varepsilon}
\newcommand{\N}{\mathbb N}
\newcommand{\Z}{\mathbb Z}

\newcommand{\M}{\widetilde{M}}

\swapnumbers

\newtheorem{theorem}[subsection]{Theorem}
\newtheorem{lemma}[subsection]{Lemma}
\newtheorem{corollary}[subsection]{Corollary}
\newtheorem{proposition}[subsection]{Proposition}

\newtheorem{definition}[subsection]{Definition}
\newtheorem{remark}[subsection]{Remark}

\newtheorem*{claim*}{Claim}
\newtheorem*{theorem*}{Theorem}
\newtheorem*{remark*}{Remark}

\bibliographystyle{plain}



\title[\resizebox{5in}{!}{Mean curvature flow in Homology and foliations of hyperbolic $3$-manifolds \ \ }]{Mean curvature flow in Homology and foliations of hyperbolic $3$-manifolds}
\author{Marco A. M. Guaraco, Vanderson Lima, Franco Vargas Pallete}
\address{Imperial College London}
\email{guaraco@imperial.ac.uk}

\address{Instituto de Matem\'atica e Estat\'istica\\ 
Universidade Federal do Rio Grande do Sul\\ 
Brazil}
\email{vanderson.lima@ufrgs.br}

\address{Yale University}
\email{franco.vargaspallete@yale.edu}

\thanks{FVP was supported by the Minerva Research Foundation and by NSF grant DMS-2001997.}

\begin{document}
 

\setcounter{tocdepth}{1}
\begin{abstract}
We study global aspects of the mean curvature flow of non-separating hypersurfaces $S$ in closed manifolds. For instance, if $S$ has non-vanishing mean curvature, we show its level set flow converges smoothly towards an embedded minimal hypersurface $\Gamma$. We prove a similar result for the flow with surgery in dimension 2. As an application we show the existence of monotone incompressible isotopies in manifolds with negative curvature. Combining this result with min-max theory, we show that quasi-Fuchsian and hyperbolic $3$-manifolds fibered over $\mathrm{S}^1$ admit smooth entire foliations whose leaves are either minimal or have non-vanishing mean curvature. We also conclude the existence of outermost minimal surfaces for quasi-Fuchsian ends and study their continuity with respect to variations of the quasi-Fuchsian metric. 

\end{abstract}

\maketitle 

\vspace{-28pt}

\section{Main results}

Let $S$ be an embedded hypersurface in a Riemannian 
manifold $M$. It is an interesting question whether one can evolve $S$ by the mean curvature flow towards an embedded minimal hypersurface $\Gamma$. As singularities naturally emerge during the evolution, weak notions of mean curvature flow are necessary in order to continue the process for large times. In some cases, regularity theories for weak flows are developed under the crucial assumption that the evolving hypersurface is a boundary. A natural question in this context is whether those results can be extended to non-separating evolutions.  

We first look at the \textit{level set flow} approach (also called ``biggest flow'' in \cite{IlmanenShort}). In \cite{WhiteSize}, B. White developed a regularity theory for this flow assuming $S$ is the boundary of a compact mean-convex set. One of the key observations in this case is that the evolution is monotone and encloses a region satisfying an outward minimizing property. When $S$ is non-separating the global geometry of $M$ might complicate this description: not only $S$ does not bound a region but its evolution might repeat points of $M$ (see \Cref{figrepeat}).

Throughout this paper we assume that $3\leq \dim M\leq 7$ and say that  $S\subset M$ has \textit{non-vanishing mean curvature} if its mean curvature vector satisfies $\vec{H}(p) \neq 0, \forall\, p \in S$. 

Our first result is:

\begin{theorem}\label{theoremHomology}
Let $S\subset M$ be a non-separating hypersurface with non-vanishing mean curvature. If $t\mapsto F_t(S)$ is the level set flow of $S$, then there exists a stable minimal hypersurface $\Gamma$, such that $F_t(S) \to \Gamma$ smoothly as $t\to\infty$. 
\end{theorem}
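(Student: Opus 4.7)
The plan is to lift the flow to a covering of $M$ in which $S$ becomes the boundary of a mean-convex region, apply White's regularity theory \cite{WhiteSize}, and combine it with the area-decreasing property of mean curvature flow and a monotonicity argument upstairs to obtain smooth convergence.

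Because $\vec H\neq 0$ everywhere on $S$, the hypersurface is two-sided and $\vec H$ selects a transverse orientation. Hence $[S]$ is non-trivial in $H_{n-1}(M;\Z)$ and Poincar\'e dual to the class of a smooth map $f:M\to S^1$ with $f^{-1}(0)=S$. I would pass to the associated infinite cyclic cover $\pi:\widehat M\to M$, lift $f$ to $\widehat f:\widehat M\to\R$, and denote the preimages of $S$ by $\widehat S_k=\widehat f^{-1}(k)$. Orienting so that $\vec H$ points in the direction $+\nabla\widehat f$, the region $\widehat U=\widehat f^{-1}([0,\infty))$ has a single, globally mean-convex boundary $\widehat S_0$, which is a diffeomorphic copy of $S$ since the map $\pi_1(S)\to\Z$ is trivial. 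White's regularity theory then yields that the level set flow $F_t(\widehat S_0)$ is smooth for all $t>0$, satisfies locally uniform curvature estimates, and bounds a nested family of regions $K_t\subset\widehat U$ which shrink monotonically. By the avoidance principle $F_t(\widehat S_0)$ is disjoint from all its non-trivial deck translates $\tau^k F_t(\widehat S_0)=F_t(\widehat S_k)$, so it projects injectively to $M$ and agrees with the smooth level set flow $F_t(S)$ there.

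Next, I would exploit area monotonicity in $M$: since $F_t(S)$ is smooth,
\begin{equation*}
\frac{d}{dt}\Area(F_t(S))=-\!\int_{F_t(S)}|\vec H|^2\,d\Area,
\end{equation*}
and $\Area(F_t(S))$ is bounded below by $\inf\{\Area(S'):[S']=[S]\}>0$, which is positive because $[S]\neq 0$ and $M$ is compact. Hence $\int_0^\infty\!\int_{F_t(S)}|\vec H|^2\,d\Area\,dt<\infty$ and $\|\vec H\|_{L^2(F_{t_n}(S))}\to 0$ along some sequence $t_n\to\infty$. The uniform curvature estimates from \cite{WhiteSize} then produce a smooth embedded subsequential limit $\Gamma\subset M$, which is necessarily minimal. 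The monotone shrinking of $K_t$ in $\widehat U$ gives $K_t\searrow K_\infty$ with $\partial K_\infty=\widehat\Gamma_0$ a single minimal hypersurface projecting to $\Gamma$, upgrading subsequential to full $C^\infty$ convergence $F_t(S)\to\Gamma$. Stability follows from the one-sided approach: the normalized normal graphing function of $F_t(S)$ over $\Gamma$ converges to a positive Jacobi field, precluding any negative eigenvalue of the Jacobi operator.

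The main obstacle I foresee is the interplay with the non-compact cover $\widehat M$: one must verify, using the $\Z$-equivariance and the fixed area bound coming from $M$, that $\partial K_\infty$ stays in a compact piece of $\widehat M$ (i.e., the flow does not translate off to infinity in the $\widehat f$ direction), so that $\widehat\Gamma_0$ is a genuine closed hypersurface whose projection to $M$ is the desired $\Gamma$. If instead one prefers to work in a compact cover, the same argument can be attempted in the $\Z/2$ double cover, but the enclosed slab is only mean-convex along one of its two boundary components, so one must employ the receding translate of $F_t$ under the deck involution as a dynamical barrier.
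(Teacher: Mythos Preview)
Your overall strategy---pass to the infinite cyclic cover determined by $[S]$, view the lift of $S$ as the boundary of a mean-convex half-space, and invoke White's theory---is exactly the paper's approach. However, the proposal has a genuine gap precisely at the point you flag as ``the main obstacle'': you do not establish that the flow in $\widehat M$ is confined to a compact region, and without this White's results (which are stated for compact mean-convex sets) do not apply. Your suggested fixes are not adequate: the area bound from $M$ does not by itself prevent $F_t(\widehat S_0)$ from drifting off to infinity in the $\widehat f$-direction, and the $\Z/2$ cover idea does not produce a separating lift in general. The paper resolves this cleanly by taking an \emph{area-minimizing} hypersurface $\Sigma\in[S]$, lifting it, and using two deck-translates $\rho^{\pm k}(\widetilde\Sigma)$ as stationary barriers that sandwich $\widetilde S$ in a compact slab $X_k$; the level set flow of $\widetilde S^+$ then coincides with that of a compact set union a fixed half-space, and White's theorem applies verbatim. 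This barrier construction is the missing idea.

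There are also two smaller issues. First, your claim that $F_t(\widehat S_0)$ is smooth \emph{for all} $t>0$ is stronger than what White proves: one only gets smoothness for a.e.\ $t$ and for all sufficiently large $t$, so the pointwise area-derivative computation is not justified at every time. In fact the $L^2$-mean-curvature argument is unnecessary: once White's structure theorem is available (items (5)--(6) of the paper's \Cref{theoremWhite}), smooth convergence to a stable minimal $\widetilde\Gamma$ is automatic. Second, you assert that $\widehat\Gamma_0$ projects injectively to $M$, but this is not obvious: different deck-translates $\rho^i(\widetilde\Gamma)$ may well touch (see the paper's \Cref{figmult3}). The paper handles this by observing that the inclusion principle forces any such intersection to be one-sided and tangential, and then invokes the maximum principle for minimal hypersurfaces to conclude that touching components coincide, so that $\bigcup_i\rho^i(\widetilde\Gamma)$, and hence its projection $\Gamma$, is embedded.
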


Next, we study in more detail the evolution of a 2-dimensional surface $S$. For this, we look at \textit{the mean-convex mean curvature flow with surgery} from \cite{HaslhoferKetover,HaslhoferKleiner2} (a similar theory was developed by \cite{BrendleHuisken3m}). First, we extend the results from \cite{HaslhoferKetover,BrendleHuisken3m} to the non-separating case:

\begin{theorem}\label{theoremSurgery}
Let $M$ be a closed $3$-manifold and $S\subset M$ a non-separating surface with non-vanishing mean curvature. Let $\Gamma$ be the limit of the level set flow of $S$. Then, there exists a mean curvature flow with surgery starting at $S$ such that, after finitely many surgeries, converges smoothly to $\Gamma$ with the same multiplicity as the level set flow.
\end{theorem}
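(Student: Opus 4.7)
The plan is to reduce the non-separating case to the separating one by passing to the infinite cyclic cover dual to $[S]$, apply the surgery construction of \cite{HaslhoferKetover} or \cite{BrendleHuisken3m} there, and descend to $M$, using \Cref{theoremHomology} both as the source of global convergence and as a compactness barrier.

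Since $S$ has non-vanishing mean curvature it is two-sided, and being non-separating it determines a nontrivial homomorphism $\pi_1(M) \to \Z$ by intersection number with $S$. Let $\pi\colon \M \to M$ be the corresponding infinite cyclic cover. A chosen lift $\widetilde{S}$ separates $\M$, is mean-convex, and bounds a domain $\widetilde{\Omega}$ on the side of its mean curvature vector. Equivariance of the level set flow combined with \Cref{theoremHomology} gives smooth convergence of $F_t(\widetilde{S})$ to a lift $\widetilde{\Gamma}$ of $\Gamma$, confining the evolution to a relatively compact region $W \subset \M$ situated between $\widetilde{S}$ and $\widetilde{\Gamma}$.

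Within $W$ the standard separating mean-convex setting applies, so the flow with surgery of \cite{HaslhoferKetover,BrendleHuisken3m} produces a flow $(\widetilde{S}_t)$ with finitely many surgery times; finiteness follows exactly as in the cited works, using the area of $\widetilde{\Gamma}$ as a uniform lower barrier and the standard surgery-scale budget. Each surgery is performed inside a small ball determined by the surgery scale, so by taking these scales sufficiently small that the balls embed under $\pi$, the flow descends to a flow with surgery $(S_t)$ on $M$. Past the final surgery time the evolution is smooth and mean-convex, and the avoidance principle sandwiches it between inward and outward perturbations of the level set flow of $S$. By \Cref{theoremHomology} both barriers converge smoothly to $\Gamma$ with a well-defined multiplicity, which forces $(S_t)$ to do the same.

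The main obstacle I foresee is checking that the Haslhofer--Kleiner surgery machinery runs in the non-compact ambient $\M$: one must verify that the curvature estimates, pseudolocality and canonical neighborhood theorems depend only on local ambient geometry and on the fixed compact region $W$ traversed by the flow, rather than on global compactness of the ambient manifold. A secondary technical point is to certify that the multiplicities agree, which should follow from preservation of the $\Z/2$ homology class of the evolving surface under the neck surgeries together with the smooth convergence of both flows in a tubular neighborhood of $\Gamma$.
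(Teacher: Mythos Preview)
Your overall strategy matches the paper's: pass to the infinite cyclic cover $\M$, run the mean-convex flow with surgery on the lifted domain $\widetilde{S}^+$, and project back to $M$. The paper disposes of your stated obstacle about non-compact ambient via \Cref{closedcompact}: the flow of $\widetilde{S}^+$ agrees with a genuinely compact flow modulo a fixed region $\rho^i(\widetilde{\Sigma}^+)$ bounded by area-minimizing barriers, so all of the Haslhofer--Kleiner machinery applies without modification.

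The genuine gap is in your projection step. Arranging that each surgery ball is small enough to embed under $\pi$ only guarantees that the \emph{surgery operations themselves} are local; it does not show that the \emph{smooth evolution between surgery times} projects to an embedded surface in $M$. After the first surgery the evolving hypersurface is no longer the level set flow of $\widetilde{S}$, so the avoidance-of-deck-translates argument behind \Cref{projectswell} is not available, and there is no a priori reason $\partial K_t$ and $\rho(\partial K_t)$ stay disjoint. The paper fills this gap with Lauer's approximation result (\Cref{theoremLauer}): for surgery parameters fine enough one has the sandwich
\[
F_{t+\e_j}(\widetilde{S}^+)\ \subset\ K_t^j\ \subset\ F_t(\widetilde{S}^+),
\]
so that $\partial K_t^j$ (and the pre-/post-surgery slices at singular times) lies in the thin shell $\bigcup_{s\in[t,t+\e_j]} F_s(\widetilde{S})$, on which $\pi$ is a diffeomorphism by \Cref{diffeoclose}. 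This handles the smooth pieces and the surgery regions simultaneously (\Cref{surgeryprojects}), and the same sandwich, carried past the last surgery time in \Cref{theoremContinuation}, gives the multiplicity statement directly---your $\Z/2$-homology argument is not needed.
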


The mean-convex mean curvature flow with surgery has been used in \cite{HaslhoferKetover} to construct smooth mean-convex foliations of $3$-balls by $2$-spheres (see also \cite{BuzanoSpheres,BuzanoTori,LiokumovichMaximo} for other applications). Building on these ideas, our next goal is to prove the existence of strictly monotone mean-convex isotopies of incompressible surfaces, both in the separating and non-separating case (see \Cref{defisotopy}). More precisely, we show:

\begin{theorem}\label{theoremIncompressible}
Let $M$ be a closed $3$-manifold and $S\subset M$ a connected incompressible surface with non-vanishing mean curvature. Let $\Gamma$ be the limit of the level set flow of $S$. 

Then, $\Gamma$ contains an incompressible component $\Gamma'$ such that: 
\begin{enumerate}
\item [(a1)] $\Gamma'$ has multiplicity $1$ and is diffeomorphic to $S$, or
\item [(a2)] $\Gamma'$ has multiplicity $2$, is one-sided and the boundary of a tubular neighborhood of $\Gamma'$ is diffeomorphic to $S$. 
\item [(b)] $\Gamma\setminus \Gamma'$ is a (possibly empty) union of copies of $\mathrm{S}^2$ and $\mathbb{RP}^2$.
\end{enumerate}

Moreover, if $(a1)$ holds and $\Gamma\setminus \Gamma'=\emptyset$, then there exists a strictly monotone mean-convex isotopy between $S$ and $\Gamma$.

\end{theorem}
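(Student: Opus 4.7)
The plan is to upgrade the mean curvature flow with surgery from \Cref{theoremSurgery} into a smooth strictly monotone mean-convex isotopy, using incompressibility of $S$ together with the hypothesis $\Gamma \setminus \Gamma' = \emptyset$. By \Cref{theoremSurgery} there is a mean curvature flow with surgery $\{S_t\}_{t\geq 0}$ starting at $S$ that, after finitely many surgery times $t_1 < \cdots < t_N$, converges smoothly with multiplicity $1$ to $\Gamma = \Gamma'$. On each open interval $(t_{i-1}, t_i)$ the evolution is a smooth mean curvature flow of mean-convex surfaces, and by the strong maximum principle this piece is automatically a strictly monotone mean-convex isotopy on that interval. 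It therefore suffices to replace each surgery instant by a strictly monotone mean-convex isotopy on a short time interval and to concatenate everything after a time reparametrization.

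Before carrying this out, one first uses incompressibility to classify the surgeries that can occur. On each smooth interval the main component is a smooth deformation of $S$ and hence still incompressible in $M$. A neck-pinch surgery on an incompressible surface can only happen along a core loop that is null-homotopic on the surface, so the pinching disk together with a disk on the surface forms a $2$-sphere; near the neck this sphere bounds a small $3$-ball. Thus every surgery discards, now or later, a spherical component sitting inside a small ball. Combined with $\Gamma \setminus \Gamma' = \emptyset$, every such sphere must disappear under subsequent flow before $t = \infty$, so in particular no surgery genuinely changes the topology of the main component up to isotopy.

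For each surgery time $t_i$ one then constructs an explicit local replacement in the spirit of \cite{HaslhoferKetover}. The canonical neighborhood theorem for mean-convex flow with surgery gives $\varepsilon$-cylindrical control of the neck, and the previous step produces a small ball in $M$ in which the discarded component is contained. Inside this ball one writes down a one-parameter family of embedded mean-convex surfaces interpolating strictly monotonically in the direction of $\vec{H}$ between the pre- and post-surgery main components, effectively filling the ball from one side with mean-convex caps; a standard rounding construction ensures that embeddedness, mean-convexity and strict monotonicity are preserved throughout, and that the family matches the smooth MCF at the two endpoints of the short replacement interval. Gluing these local replacements to the smooth MCF pieces and reparametrizing time yields the desired strictly monotone mean-convex isotopy from $S$ to $\Gamma$.

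The main technical difficulty lies in the local replacement step: one must exhibit an interpolating family that (i) glues smoothly with the smooth MCF at its endpoints, (ii) remains embedded and mean-convex throughout, and (iii) is strictly monotone in time, with no accidental tangency either internally or with the outside flow. This depends on the quantitative description of neck singularities provided by the surgery theorem and on the observation from the second step that, thanks to incompressibility, each surgery takes place inside a small $3$-ball in $M$, thereby reducing the construction to a mean-convex filling problem for a ball, which is precisely the setting handled in \cite{HaslhoferKetover}.
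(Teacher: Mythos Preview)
Your overall strategy---run the flow with surgery, observe that incompressibility forces each surgery to split off a $2$-sphere, then replace each surgery by an isotopy and concatenate---matches the paper's. But there is a genuine gap in the key step. You write that ``each surgery takes place inside a small $3$-ball in $M$'' and that the split-off spherical component ``sits inside a small ball.'' This is not true: the \emph{neck} itself is small, but the region $B^j_{t_i}$ enclosed by the split-off $2$-sphere can be arbitrarily large (think of \Cref{figmult3}, or a long tube wrapping around the manifold). So the problem does \emph{not} reduce to a local cap construction in a small ball, and the ``standard rounding'' you invoke is not sufficient.

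What the paper does instead is use the hypothesis $\Gamma\setminus\Gamma'=\emptyset$ to show that the level set flow of each $B^j_{t_i}$ becomes extinct; combined with \Cref{corollarycanonical} this forces $B^j_{t_i}$ to be a (possibly large) $3$-ball. One then invokes the full marble-tree isotopy of Haslhofer--Ketover (\Cref{balltree}) on that ball, together with the gluing map (\Cref{firstisot}) to attach it back to the incompressible component, yielding \Cref{ourisot}. This is substantially more than a local filling. You also do not address the non-separating case: there one must work in the covering $\M$ and check that all pieces of the isotopy---in particular the marble-tree isotopies of the $B^j_{t_i}$---project injectively back to $M$; the paper verifies this via a separate inductive argument at the end of \Cref{proofIncompressible}. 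Finally, items (a1), (a2), (b) deserve an explicit sentence: they follow from \Cref{incompressiblecomponents}, \Cref{corollarycanonical}, and item (6) of \Cref{theoremWhite}.
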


This result suits particularly well the context of negatively curved $3$-manifolds. In this case, minimal surfaces cannot be diffeomorphic to $\mathrm{S}^2$ or $\mathbb{RP}^2$ as a consequence of the Gauss formula and the Gauss-Bonnet Theorem. Therefore, $\Gamma\setminus \Gamma'=\emptyset$ holds automatically. We show:

\begin{theorem}\label{theoremNegative}
Let $M$ be a $3$-manifold with negative sectional curvature and $S\subset M$ an incompressible surface with non-vanishing mean curvature. Assume that one of the following holds:
\begin{enumerate}
\item $M$ is closed and $S$ is non-separating, or
\item $K\subset M$ is a mean-convex component of $\partial K$, $K\subset M$ is compact and the components of $ \partial K\setminus S \neq \emptyset$ are either minimal or mean-convex.
\end{enumerate} Then, there exists a strictly monotone mean-convex isotopy between $S$ and the stable minimal surface $\Gamma$.
\end{theorem}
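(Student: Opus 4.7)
The plan is to reduce Theorem \ref{theoremNegative} to Theorem \ref{theoremIncompressible}: I would use the rigidity of negative curvature to guarantee that the hypothesis $\Gamma\setminus \Gamma' = \emptyset$ of its final clause holds, and that alternative (a1) must occur; and I would extend the preceding theorems to the bounded setting of case (2) by a barrier argument.

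First I would run a Gauss--Bonnet argument. For any closed minimal surface $\Sigma \subset M$, the Gauss equation and minimality yield
\[
K_\Sigma = K_M(T\Sigma) + \det A \leq K_M(T\Sigma) < 0,
\]
since $\det A = -\tfrac12|A|^2 \leq 0$ at points of a minimal surface in a $3$-manifold. Integrating and applying Gauss--Bonnet gives $\chi(\Sigma) < 0$, so no component of the minimal limit $\Gamma$ from Theorem \ref{theoremHomology} is an $\mathrm{S}^2$ or $\mathbb{RP}^2$. Conclusion (b) of Theorem \ref{theoremIncompressible} then forces $\Gamma\setminus \Gamma' = \emptyset$.

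Next I would rule out alternative (a2). In (a2), $S$ is diffeomorphic to $\partial N(\Gamma')$ for a one-sided $\Gamma'$, and in particular $S$ separates a tubular neighborhood of $\Gamma'$ from its complement in $M$. In case (1) this contradicts the non-separating hypothesis on $S$. In case (2), since $\Gamma' \subset K$ (the flow's limit) and $K$ sits on one side of $S$, one gets $K \subset \overline{N(\Gamma')}$; combined with the fact that the monotone flow from $S$ sweeps through $N(\Gamma') \setminus \Gamma'$ and must remain in $K$, this forces $K = \overline{N(\Gamma')}$ and hence $\partial K \setminus S = \emptyset$, contradicting the hypothesis. Thus alternative (a1) holds in both cases, and in case (1) Theorem \ref{theoremIncompressible} directly provides the desired strictly monotone mean-convex isotopy.

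Finally, in case (2) I would adapt Theorems \ref{theoremHomology}, \ref{theoremSurgery} and \ref{theoremIncompressible} to the flow of $S$ confined to $K$. The minimal or mean-convex components of $\partial K \setminus S$ act as barriers for weak mean curvature flow (maximum principle), so the level set flow and the flow with surgery remain in $K$ for all time and converge to a stable minimal surface $\Gamma \subset K$; the two preceding steps then apply verbatim to deliver the isotopy inside $K$. The main obstacle is precisely this extension: verifying that every step of the surgery construction and of the incompressibility tracking of Theorem \ref{theoremIncompressible} is compatible with the boundary barrier $\partial K \setminus S$. In particular, one needs the necks where surgeries occur to stay a definite distance from $\partial K \setminus S$ (via mean-convex avoidance), and one must propagate incompressibility of $F_t(S)$ through surgeries in the relative setting of the pair $(K, \partial K \setminus S)$.
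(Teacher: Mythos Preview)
Your overall strategy---reduce to Theorem~\ref{theoremIncompressible}, use Gauss--Bonnet to get $\Gamma\setminus\Gamma'=\emptyset$, then rule out (a2)---matches the paper. The Gauss--Bonnet step is identical. The difference is in how (a2) is excluded. The paper does not argue topologically; instead it shows directly that $\operatorname{int}K_\infty\neq\emptyset$, which by Theorem~\ref{theoremWhite}(6) forces multiplicity one. In case~(1) this is immediate from the covering construction: $\rho^k(\widetilde{\Sigma}^+)\subset K_\infty$. Your homological argument for case~(1) is morally correct but imprecisely stated: ``diffeomorphic to $\partial N(\Gamma')$'' does not by itself make $S$ separating---you need that the flow preserves $[S]\in H_{2}(M,\Z)$ and that in (a2) the limit is null-homologous.

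The genuine gap is in case~(2). Your claim that $K\subset\overline{N(\Gamma')}$ does not follow from the hypotheses you list, and the subsequent chain of implications is unclear. The paper's device here is simple and you are missing it: attach caps to the components of $\partial K\setminus S$ on the \emph{outside} of $K$, producing a larger compact region $\hat K$ with $\partial\hat K=S$. One then runs all of Theorems~\ref{theoremWhite}, \ref{theoremContinuation}, \ref{theoremIncompressible} on $\hat K$ with no boundary issues at all. Since the flow of $S$ never crosses the original barriers $\partial K\setminus S$ (avoidance principle), the caps $\hat K\setminus K$ stay inside $F_t(\hat K)$ for all $t$, hence $\operatorname{int}\hat K_\infty\neq\emptyset$ and (a1) holds. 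This capping trick simultaneously removes the obstacle you flag at the end---compatibility of surgery with the boundary barrier---because after capping there is no boundary other than $S$ and the standard theory applies verbatim.
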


Combining this result with min-max theory we obtain:

\begin{theorem}\label{theoremFoliation}
Let $M$ be a hyperbolic $3$-manifold and $S$ a surface of genus at least $2$. Assume that:
\begin{enumerate}
\item $M\simeq S\times \R$, with a quasi-Fuchsian metric or
\item $M$ fibers over $\mathrm{S}^1$, with fiber $S$.
\end{enumerate} 
Then, $M$ admits a smooth foliation by surfaces on the isotopy class of the fiber, such that each leaf is either minimal or has non-vanishing mean curvature. 
\end{theorem}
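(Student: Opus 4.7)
The strategy is to combine min-max theory (to produce minimal surfaces in the isotopy class of $S$) with Theorem \ref{theoremNegative} (to foliate the complementary regions by strictly monotone mean-convex isotopies). The minimal surfaces will form the ``backbone'' of the foliation, and the mean-convex isotopies will fill in between them and, in case (1), extend the foliation out to infinity.

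I would first produce minimal surfaces isotopic to the fiber. In case (2), apply a Simon--Smith / Almgren--Pitts min-max to the canonical sweep-out by fibers $\{S_t\}_{t\in\mathrm{S}^1}$; its width is positive, and incompressibility of $S$, the assumption genus at least $2$, and the absence of minimal $\mathrm{S}^2$ or $\mathbb{RP}^2$ in negative curvature force the resulting stationary varifold to be a smooth closed minimal surface $\Gamma$ satisfying the alternative of Theorem \ref{theoremIncompressible}: either two-sided of multiplicity $1$ and isotopic to $S$, or one-sided of multiplicity $2$ whose tubular neighborhood boundary is isotopic to $S$. In case (1), the outermost minimal surfaces $\Gamma_+, \Gamma_-$ in the two ends, referenced in the abstract, are two-sided and isotopic to the fiber.

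Next I would foliate a neighborhood of each minimal leaf $\Gamma$ using its first Jacobi eigenfunction $\phi > 0$: the normal graphs $\Gamma_t := \exp_\Gamma(t\phi\,\nu)$ are incompressible and have non-vanishing mean curvature for $0 < |t| \ll 1$, supplying mean-convex initial data for Theorem \ref{theoremNegative}. In case (2), cut $M$ along $\Gamma$ (or along the boundary of a tubular neighborhood of $\Gamma$ in the one-sided case) to obtain a compact manifold $N$ whose boundary consists of two copies of a two-sided surface isotopic to $S$. Applying Theorem \ref{theoremNegative}(2) to the sub-region $K \subset N$ bounded by a mean-convex graph $\Gamma_\delta$ close to one boundary copy and the opposite boundary copy produces a strictly monotone mean-convex isotopy from $\Gamma_\delta$ to a stable minimal surface $\Gamma' \subset K$. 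If $\Gamma'$ is the far boundary copy the foliation descends to $M$; otherwise iterate in each complementary sub-region, with termination ensured by the finiteness of incompressible minimal surfaces of bounded area in a negatively curved $3$-manifold. Smooth matching across each minimal leaf follows from the smooth convergence of Theorem \ref{theoremHomology}.

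In case (1), the same argument foliates the compact region between $\Gamma_+$ and $\Gamma_-$. Each non-compact end $E_\pm$ beyond $\Gamma_\pm$ is handled using convex-cocompactness of the quasi-Fuchsian metric: outside a large compact set, equidistant surfaces to a smoothing of the convex core boundary are strictly convex and exhaust $E_\pm$ by mean-convex leaves. By the outermost property of $\Gamma_\pm$, no minimal surface in the fiber isotopy class lies beyond $\Gamma_\pm$, so Theorem \ref{theoremNegative}(2) applied to each compact region bounded by such an equidistant and $\Gamma_\pm$ produces a monotone isotopy terminating precisely at $\Gamma_\pm$. I expect the main obstacle to lie here: verifying that the monotone isotopies in the nested compact subregions of $E_\pm$ assemble into a \emph{single} smooth foliation rather than into mutually incompatible local ones. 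Resolving this will require monotonicity of the level-set flow with respect to the choice of outer barrier together with compactness and continuous-dependence properties of the mean-convex flow from Theorems \ref{theoremHomology}--\ref{theoremSurgery}.
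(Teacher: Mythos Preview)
Your overall architecture---minimal surfaces as backbone, Theorem \ref{theoremNegative} to fill in between---matches the paper, but the execution differs in two places, one of which is a genuine gap.

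\textbf{The ends in case (1).} You correctly identify the main obstacle: assembling the monotone isotopies on nested compact subregions of each end into a single smooth foliation. The paper does not attempt this. Instead it invokes the CMC foliation of Mazzeo--Pacard on each end of a quasi-Fuchsian manifold: the ends are already smoothly foliated by constant mean curvature surfaces isotopic to the fiber, with mean curvature pointing toward the convex core. One then applies Theorem \ref{theoremNegative} to a single CMC leaf to produce a foliation that terminates at a stable minimal surface, and repeats inward finitely many times (Anderson's finiteness) until reaching a stable surface whose inner halved neighborhood is contracting. This sidesteps your assembly problem entirely; without Mazzeo--Pacard or an equivalent input, your nested-barrier argument is incomplete.

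\textbf{Organization of the compact part.} In case (2) you begin with min-max on the sweep-out by fibers. The paper instead first produces an \emph{area-minimizing} (hence stable) fiber via Meeks--Simon--Yau, cuts along it, and only then invokes min-max---and only inside sub-cylinders bounded by stable surfaces with \emph{contracting} halved neighborhoods, where the min-max output is guaranteed to have index exactly $1$ and hence an expanding neighborhood on both sides. Concretely, the paper takes a \emph{maximal} disjoint family of stable minimal fibers (finite by Anderson), and on each complementary sub-cylinder argues by dichotomy: if some boundary component has an expanding halved neighborhood, flow from it; if both are contracting, produce an index-$1$ surface by min-max and flow outward from it. Your plan of running min-max first and then iterating is workable in spirit, but as written it does not distinguish the stable and unstable cases for the min-max output $\Gamma$: if $\Gamma$ happens to be stable, the push-off $\Gamma_\delta$ has mean curvature pointing \emph{toward} $\Gamma$, so it is not mean-convex with respect to the region $K$ you describe, and Theorem \ref{theoremNegative}(2) does not apply as stated. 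The paper's ordering (stable surfaces first, min-max only where needed) avoids this ambiguity.
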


The construction of these foliations was one of the original motivations we had in mind when we began this work. The ends of a general quasi-Fuchsian manifold admit a unique foliation by constant mean curvature surfaces isotopic to the fiber \cite{MazzeoPacard}. One might wonder if this constant mean curvature foliation can be extended globally. This is known to be impossible in certain cases \cite{HuangWang4}. A weaker question is whether one can construct global foliations such that the mean curvature of each leaf does not change sign. \Cref{theoremFoliation} answers this  affirmatively.

As a byproduct, we obtain the existence of outermost minimal surfaces in quasi-Fuchsian ends. We finish this work by studying the continuity properties of these surfaces with respect to perturbations in the family of quasi-Fuchsian metrics. We summarize our findings as:

\begin{theorem}\label{theoremOutermost}
A marked end of a quasi-Fuchsian manifold $(S\times \R,h)$ admits a minimal surface $\Sigma_{out}(h)$ which is outermost with respect to that end. The map $h\mapsto \Sigma_{out}(h)$ is continuous on an open set of total measure of the family of quasi-Fuchsian metrics. Discontinuities of the map $h\mapsto \Sigma_{out}(h)$ exist and in those cases $\Sigma_{out}(h)$ is degenerate stable and not locally area-minimizing. Finally, the function $h\mapsto \operatorname{Area}(\Sigma_{out}(h))$ is upper semi-continuous.
\end{theorem}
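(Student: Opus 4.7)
The plan is to construct $\Sigma_{out}(h)$ by pushing mean-convex barriers inward from the chosen end via mean-convex isotopies, and then to read off upper semi-continuity of area and the local structure of $h\mapsto\Sigma_{out}(h)$ from how this construction varies with $h$.

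\emph{Existence.} Near the chosen end, the Mazzeo--Pacard foliation supplies a family of CMC leaves $\{S_H\}_{H\in(0,1)}$ isotopic to $S$, each strictly mean-convex with mean curvature vector pointing inward. Each $S_H$ is incompressible with non-vanishing mean curvature and, together with any fixed mean-convex leaf from the opposite end, bounds a compact region satisfying the hypotheses of \Cref{theoremNegative}(2). That theorem produces a strictly monotone mean-convex isotopy from $S_H$ to a stable minimal surface $\Gamma_H$. The strong maximum principle makes the family $\{\Gamma_H\}$ nested and moving outward as $H\to 0^+$; standard curvature and area bounds for stable minimal surfaces produce the smooth monotone limit $\Sigma_{out}(h):=\lim_{H\to 0^+}\Gamma_H$. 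By construction no minimal surface in the fiber isotopy class can lie strictly between $\Sigma_{out}(h)$ and the chosen end, so it is outermost there.

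\emph{Upper semi-continuity of area.} Fix $h$ and $\varepsilon>0$. Push $\Sigma_{out}(h)$ slightly into the chosen end---along a positive Jacobi field if $\Sigma_{out}(h)$ is degenerate stable, or along the first leaf of the one-sided mean-convex foliation provided by strict stability otherwise---to obtain a smooth surface $S_\varepsilon$ just outside $\Sigma_{out}(h)$, strictly mean-convex for $h$ with $\operatorname{Area}(S_\varepsilon)\leq\operatorname{Area}(\Sigma_{out}(h))+\varepsilon$. Since mean-convexity is an open condition, $S_\varepsilon$ stays strictly mean-convex for every $h'$ in a $C^2$-neighborhood of $h$; using a fixed mean-convex barrier on the opposite end to cut out a compact region, \Cref{theoremNegative}(2) yields a mean-convex isotopy in $(M,h')$ from $S_\varepsilon$ to a stable minimal limit $\Gamma(h')$. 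Outermostness of $\Sigma_{out}(h')$ forces $\Gamma(h')=\Sigma_{out}(h')$ as a set (possibly with multiplicity two in the one-sided case), and area monotonicity along the isotopy gives
\[
\operatorname{Area}(\Sigma_{out}(h'))\leq\operatorname{Area}(S_\varepsilon)\leq\operatorname{Area}(\Sigma_{out}(h))+\varepsilon.
\]
Sending $h'\to h$ and then $\varepsilon\to 0$ finishes this step.

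\emph{Continuity locus and discontinuities.} Let $\mathcal{U}$ denote the set of quasi-Fuchsian metrics for which $\Sigma_{out}(h)$ is strictly stable. The implicit function theorem applied to the minimal surface equation makes $\mathcal{U}$ open and $h\mapsto\Sigma_{out}(h)$ smooth on it; since the first Jacobi eigenvalue is real-analytic on the finite-dimensional quasi-Fuchsian parameter space, its zero set is a proper real-analytic subvariety of measure zero, so $\mathcal{U}$ has total measure. For $h\notin\mathcal{U}$, a positive Jacobi field $\varphi>0$ on $\Sigma_{out}(h)$ generates a one-parameter family of outward normal graphs with vanishing first and second area-variation; the sign of the higher-order terms, pinned down by the outermostness condition (which forbids actual minimal surfaces outside $\Sigma_{out}(h)$) and negative curvature, forces these graphs to have strictly smaller area, so $\Sigma_{out}(h)$ is not locally area-minimizing. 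Genuine discontinuities are then produced by choosing a curve $h_t$ of quasi-Fuchsian metrics crossing $\mathcal{U}^c$ transversally at $t=0$: the marginal stability of $\Sigma_{out}(h_0)$ is destroyed on one side of the transversal, $\Sigma_{out}(h_t)$ must jump to a strictly inner outermost surface for $t$ on that side, and upper semi-continuity of area detects the jump. The main obstacle is this last step---rigorously producing such transversal crossings and pinning down the asymmetry of the zero-mode Jacobi direction, which requires combining negative curvature with the one-sided barrier construction to rule out symmetric bifurcations and to control the resulting discontinuous outermost surface.
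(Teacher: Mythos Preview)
Your existence argument is essentially the paper's, and your upper semi-continuity argument is a different, more direct route (push $\Sigma_{out}(h)$ slightly outward to a mean-convex barrier $S_\varepsilon$ and flow inward for nearby metrics) than the paper's (which instead analyzes the mixed-neighborhood structure at discontinuity points and chases the flow down to an inner area-minimizing surface). Your route can be made to work, but you have omitted the compactness argument showing $\Sigma_{out}(h')$ actually lies inside $S_\varepsilon$ for $h'$ near $h$, without which $\Gamma(h')=\Sigma_{out}(h')$ does not follow.

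There are two genuine errors. First, your sign is wrong in the ``not locally area-minimizing'' step: outermostness forces the \emph{outer} half-neighborhood of $\Sigma_{out}(h)$ to be contracting, so pushing outward along the Jacobi field \emph{increases} area. The paper obtains the correct conclusion by contraposition: it proves that whenever $\Sigma_{out}(h)$ is locally area-minimizing the map is continuous there, so discontinuity forces failure of local minimization (and, since strict stability implies local minimization, degeneracy as well). Second, your total-measure argument is circular: asserting that the first Jacobi eigenvalue of $\Sigma_{out}(h)$ is real-analytic in $h$ presupposes that $\Sigma_{out}(h)$ depends analytically on $h$, which is the regularity in question. The paper avoids this by invoking an independent genericity theorem (Taubes/Uhlenbeck): for an open set $\mathcal{M}^*$ of full measure, \emph{every} minimal fiber is non-degenerate, so in particular $\Sigma_{out}(h)$ is strictly stable and Proposition~\ref{continuous} applies.

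The largest gap is the existence of discontinuities, which you acknowledge. Your transversal-crossing sketch does not even establish that degenerate outermost surfaces occur. The paper's argument is entirely different and uses ingredients absent from your outline: it encodes $\Sigma_{out}(h)$ by its conformal class and holomorphic quadratic differential $(\mu(h),\alpha(h))$ via Uhlenbeck's parametrization, proves this assignment is injective, and then applies invariance of domain. If $h\mapsto(\mu(h),\alpha(h))$ were continuous near a degenerate $\Sigma_{out}(h_0)$ it would be open, but Uhlenbeck's analysis shows no stable minimal surface exists with data $(\mu(h_0),(1+\varepsilon)\alpha(h_0))$, a contradiction. The existence of a degenerate $\Sigma_{out}(h_0)$ in the first place comes from a separate continuity argument along a path from the Fuchsian locus to the Huang--Wang examples with many disjoint area-minimizing fibers.
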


Although we explicitly consider the case of quasi-Fuchsian manifolds of the form $S\times \R$, where $S$ is a closed surface of genus $g\geq 2$, our existence and continuity results concerning the outermost minimal surface are valid for more general hyperbolic 3-manifolds. For instance, these results apply to expanding ends with incompressible boundary in convex cocompact hyperbolic 3-manifolds (see \cite{CanaryEnds}).

\begin{remark}
\Cref{theoremOutermost} shows complete hyperbolic metrics can contain degenerate stable minimal surfaces that do not look like strictly stable or unstable minimal surfaces, but rather as ``saddle'' type surface. In the notiation of \Cref{defcontracting}, these surfaces have a tubular neighborhood of ``mixed type'' (see also surface IV in \Cref{figQF}).
\end{remark}

\noindent
{\bf Acknowledgments}: We are grateful to the Institute for Advanced Study where this work was initiated during the Special Year of Variational Methods in Geometry.  We thank R. Haslhofer for his kindness and patience during consultations and for suggesting the problem of the mean curvature flow with homology. We thank F. Schulze for helpful discussions and I. Agol, D. Calegari, B. Farb, F. C. Marques, L. Mazet, A. Neves, H. Rosenberg and K. Uhlenbeck for their interest in this work.

\section{Examples, ideas and organization}

The flow of a non-separating surface might repeat points of $M$, even when the surface has non-vanishing mean curvature. This is illustrated in \Cref{figrepeat}. 
\begin{figure}[h]
\centering
  \includegraphics[width=130pt]{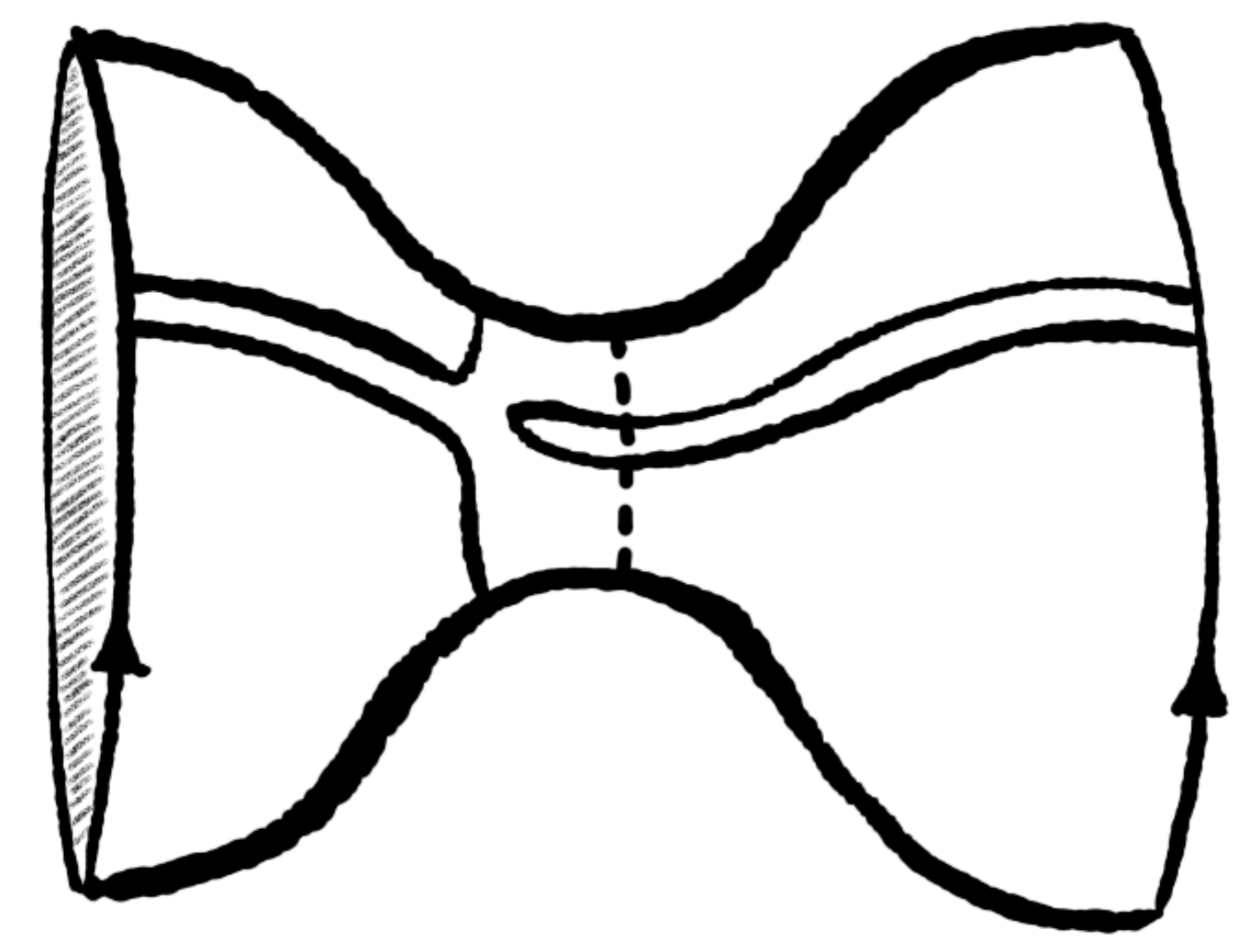}
  \caption{A hypersurface with a narrow tube (continuous line) that intersects an area-minimizing hypersurface (dashed line). The tube disappears quickly along the flow and the hypersurface eventually converges to the area-minimizing. This evolution repeats points.}
  \label{figrepeat}
 \end{figure}
 To overcome this problem, we consider a covering space $\pi: \M\to M$ with the property that the class $[S]\subset H_{n-1}(M,\Z)$ lifts to a separating class in $\M$. This covering is illustrated in \Cref{figCovering}. Its main properties can be found in \Cref{coveringdef}, \Cref{lifts}, \Cref{remarklift}. The main observation is that the lift of area-minimizing hypersurfaces in $[S]$ produce minimal barriers restricting level set flows to compact regions of $\M$. This is discussed in \Cref{minimalbarriers}. Regularity theories for compact evolutions can be applied to the region bounded by one of the lift of $S$, which is an unbounded closed set of $\M$. This is explained in \Cref{closedcompact}. The inclusion principle then implies that the boundary of this flow on $\M$ projects well back to $M$. This is the content of \Cref{projectswell} and \Cref{diffeoclose}.  We emphasize that this is a general construction and up to that point there are no curvature assumptions on $S$.  Finally, the proof of \Cref{theoremHomology} is in \Cref{proofhomology}.

 \begin{figure}[h]
\centering
  \includegraphics[width=260pt]{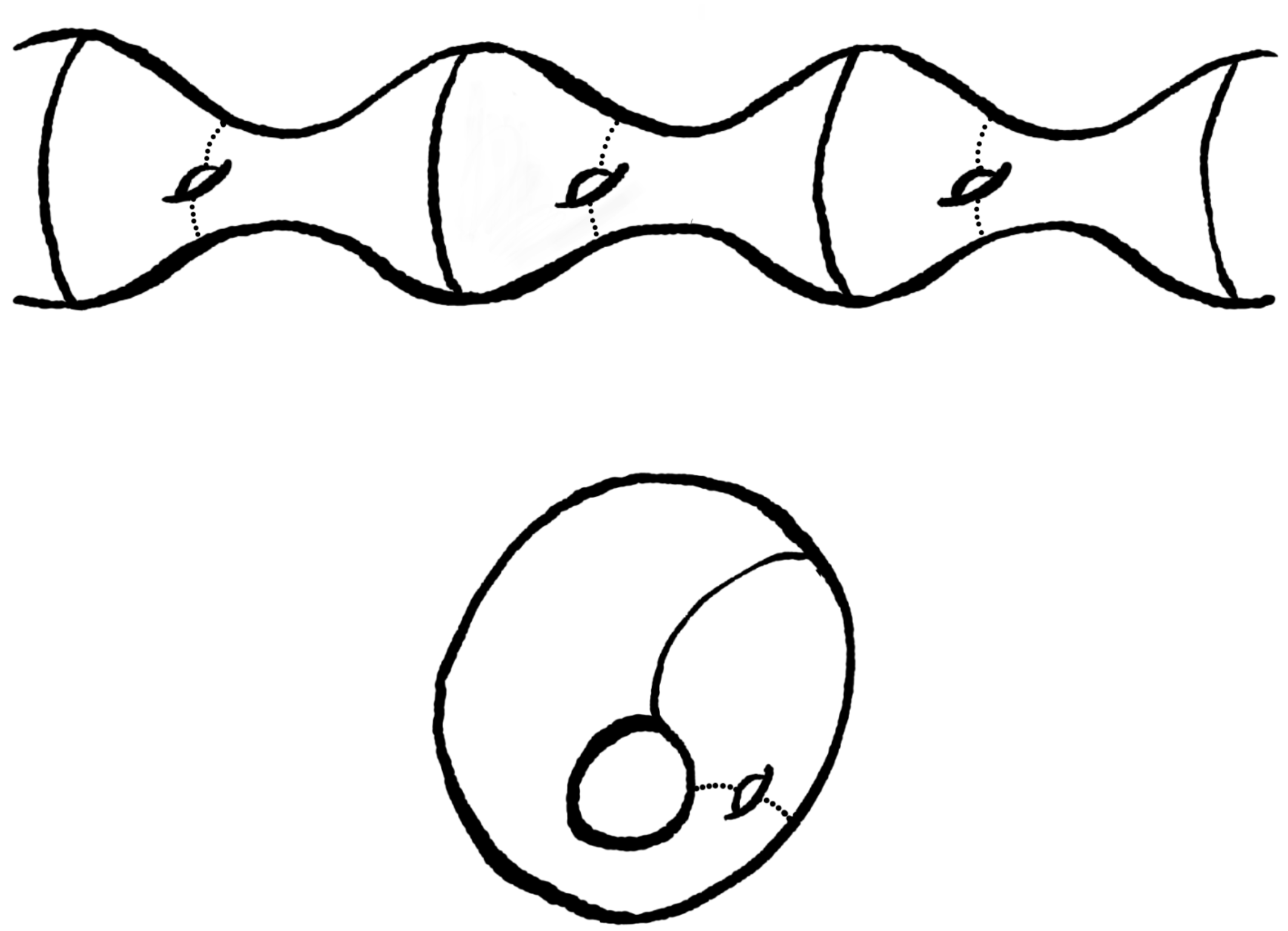}
  \caption{A covering space (top) that depends only on the homology class of the surfaces shown in the base space (bottom). The continuous line represents a surface with non-vanishing mean curvature and the dotted lines a disconnected area-minimizing hypersurface in the same homology class. For simplicity the surfaces in the figure are disjoint, but this does not have to be the case.}
  \label{figCovering}
 \end{figure}

A difference with the separating case is that components of $\Gamma$ might occur with high multiplicity. This can happen even when the initial condition has non-vanishing mean curvature. An example with multiplicity 3 is illustrated in \Cref{figmult3}. The same idea can be used to produce limits with arbitrarily high multiplicity. However, one can still  control the multiplicity of some components in some situations, e.g. \Cref{theoremIncompressible},\Cref{theoremNegative} or, if $M$ is irreducible then one automatically has that $\Gamma'$ has multiplicity one if $S$ is non-separating and incompressible.

\begin{figure}[h]

\hbox{\centering\hspace{5pt} \includegraphics[width=210pt]{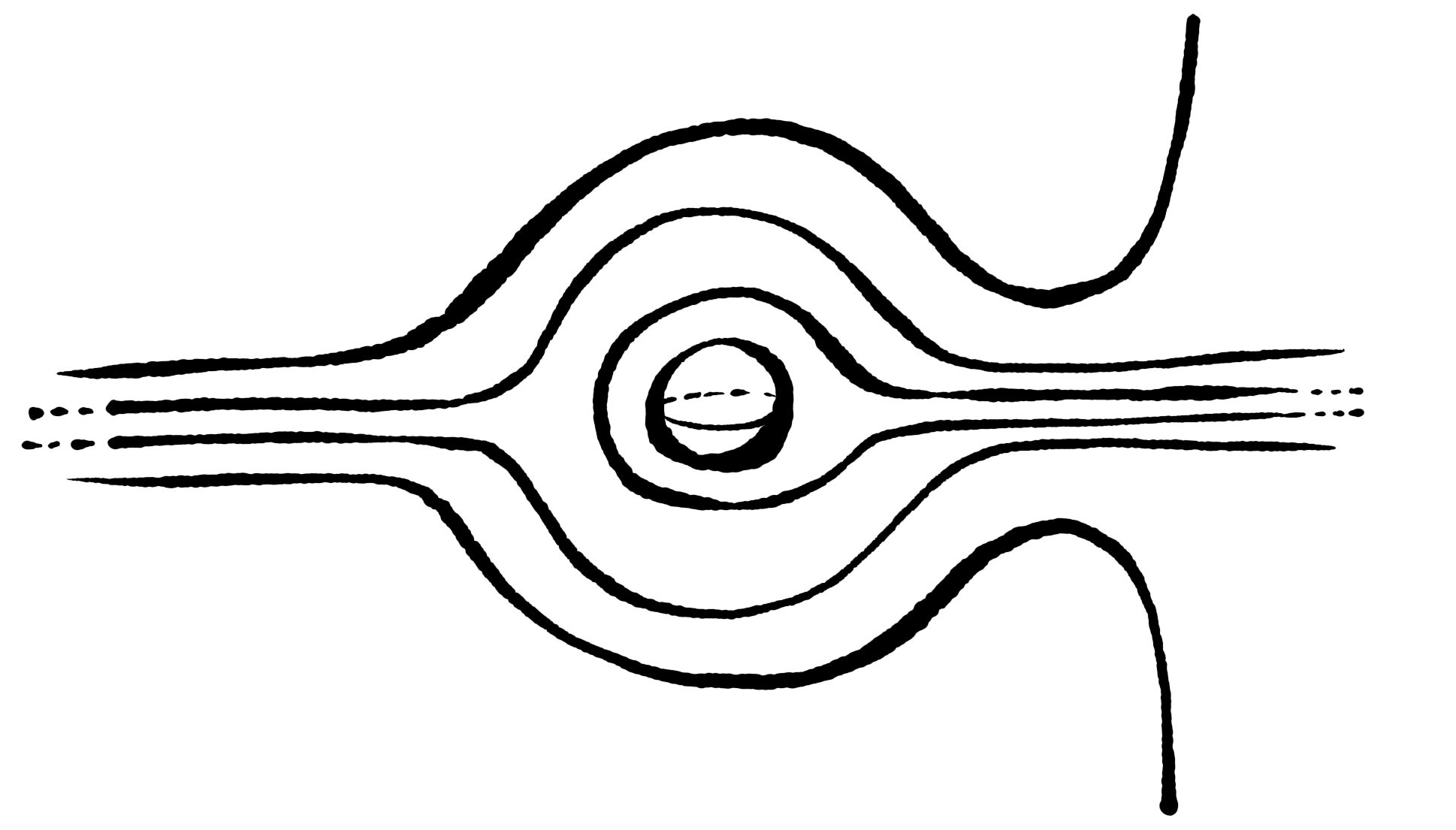}\hspace{0pt} \includegraphics[width=130pt]{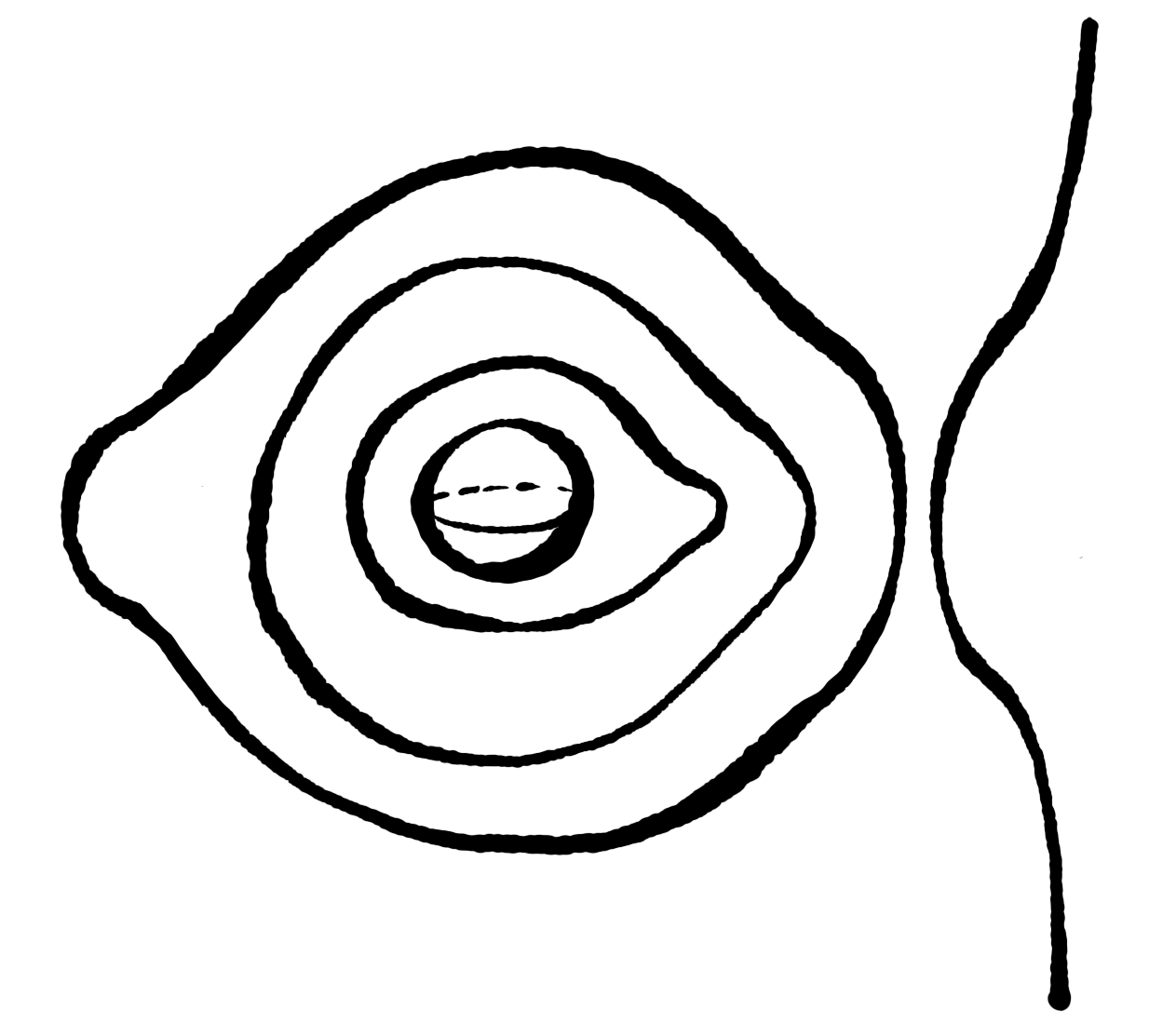}}
  \caption{The first figure (left) represents a connected surface with non-vanishing mean curvature that has a long tube going around the ambient manifold 3 times. Each time it surrounds the same minimal 2-sphere. Its evolution (right) has components accumulating with multiplicity 3 around this 2-sphere.}
  \label{figmult3}
 \end{figure}

Next, we study the mean curvature flow with surgery in dimension 2. This flow requires choosing parameters that control the dimensions of the surgeries and the possibility of discarding components after surgery. To prove \Cref{theoremSurgery}, we first notice that if these parameters are fine enough, then the flow with surgery can be continued smoothly towards $\Gamma$. This is the content of \Cref{theoremContinuation} (a similar result was proved in \cite{BrendleHuisken3m}). Then we use that the level set flow projects well (\Cref{diffeoclose}) in combination with a result of Lauer (\Cref{theoremLauer}) to conclude that surgeries also project well as long as the surgery parameters are fine enough. This is in \Cref{surgeryprojects}. Finally, the proof of \Cref{theoremSurgery} can be found in \Cref{proofSurgery}.

We then focus on the situation in which $S$ is incompressible. A key observation in the proof of \Cref{theoremIncompressible} is that despite surgery and discarding, the flow always has exactly one incompressible component (\Cref{incompressiblecomponents}). This is because, on one hand, neck surgeries correspond to compressing disks (\Cref{deftrivial}), and on the other, incompressible components are never discarded as a consequence of the canonical neighborhood theorem (\Cref{corollarycanonical}). 

In negative curvature, the flow of an incompressible surface only splits mean-convex $3$-balls which eventually become extinct. Haslhofer-Ketover \cite{HaslhoferKetover} constructed monotone mean-convex isotopies from such balls to a marble tree (\Cref{marbles}, left). To construct the isotopy from \Cref{theoremIncompressible} we first show that, instead of performing surgeries on the incompressible component, one can use the isotopy from \cite{HaslhoferKetover} to obtain an incompressible surface with marble trees attached to it (\Cref{marbles}, right). One then retracts the marble trees as in \cite{HaslhoferKetover}, first contracting balls in loose ends, then retracting the tubes attached to them into the next balls on the three, and so on. After retracting all the trees one can continue the flow normally until hitting a new surgery time. This is how we obtain a monotone isotopy between $S$ and $\Gamma$ in \Cref{theoremIncompressible}. Then \Cref{theoremNegative} is obtained after checking that the multiplicity conditions apply in negative curvature. 

\begin{figure}[h]
\hbox{\hspace{20pt} \includegraphics[width=130pt]{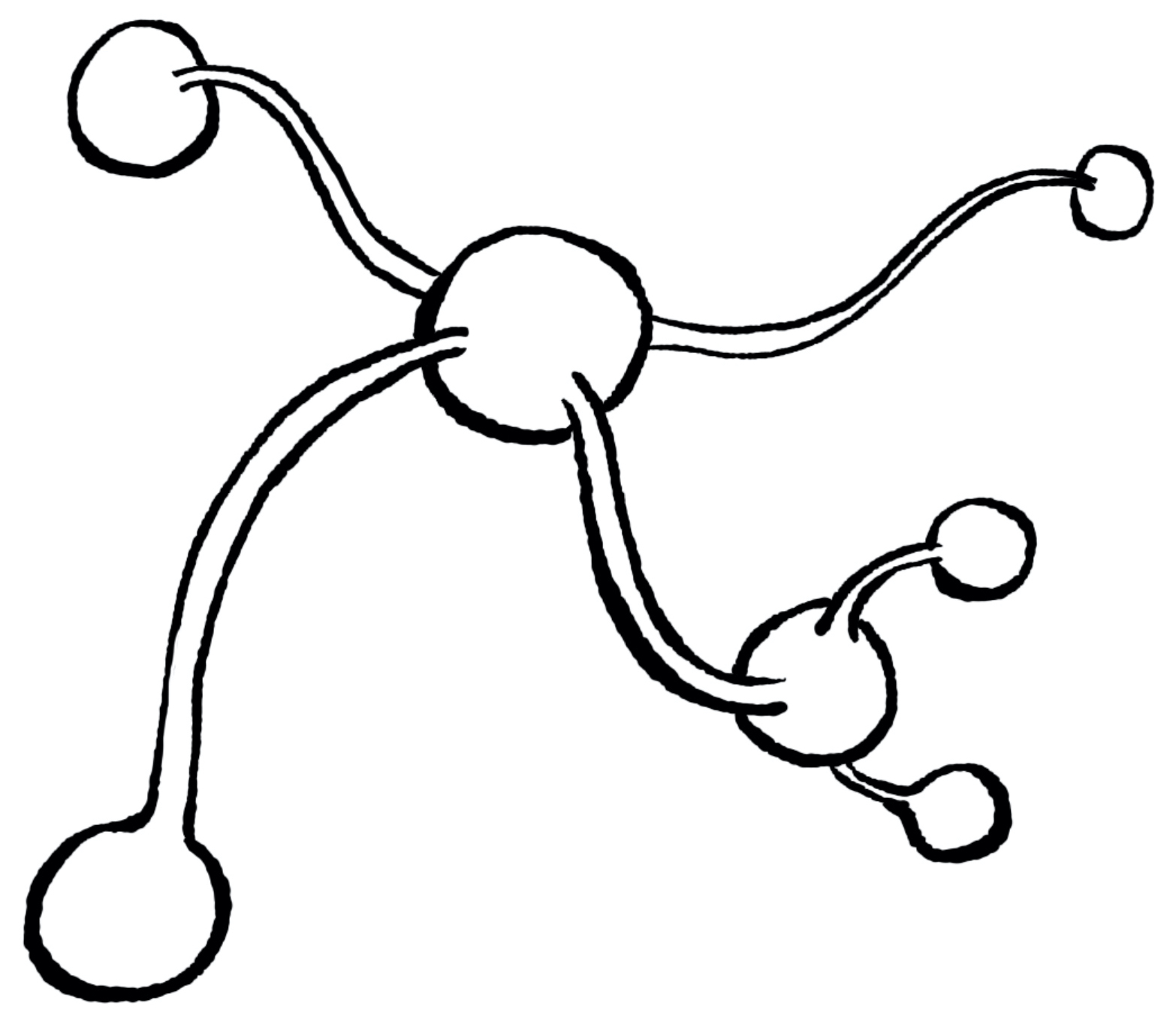}
\hspace{55pt} \includegraphics[width=130pt]{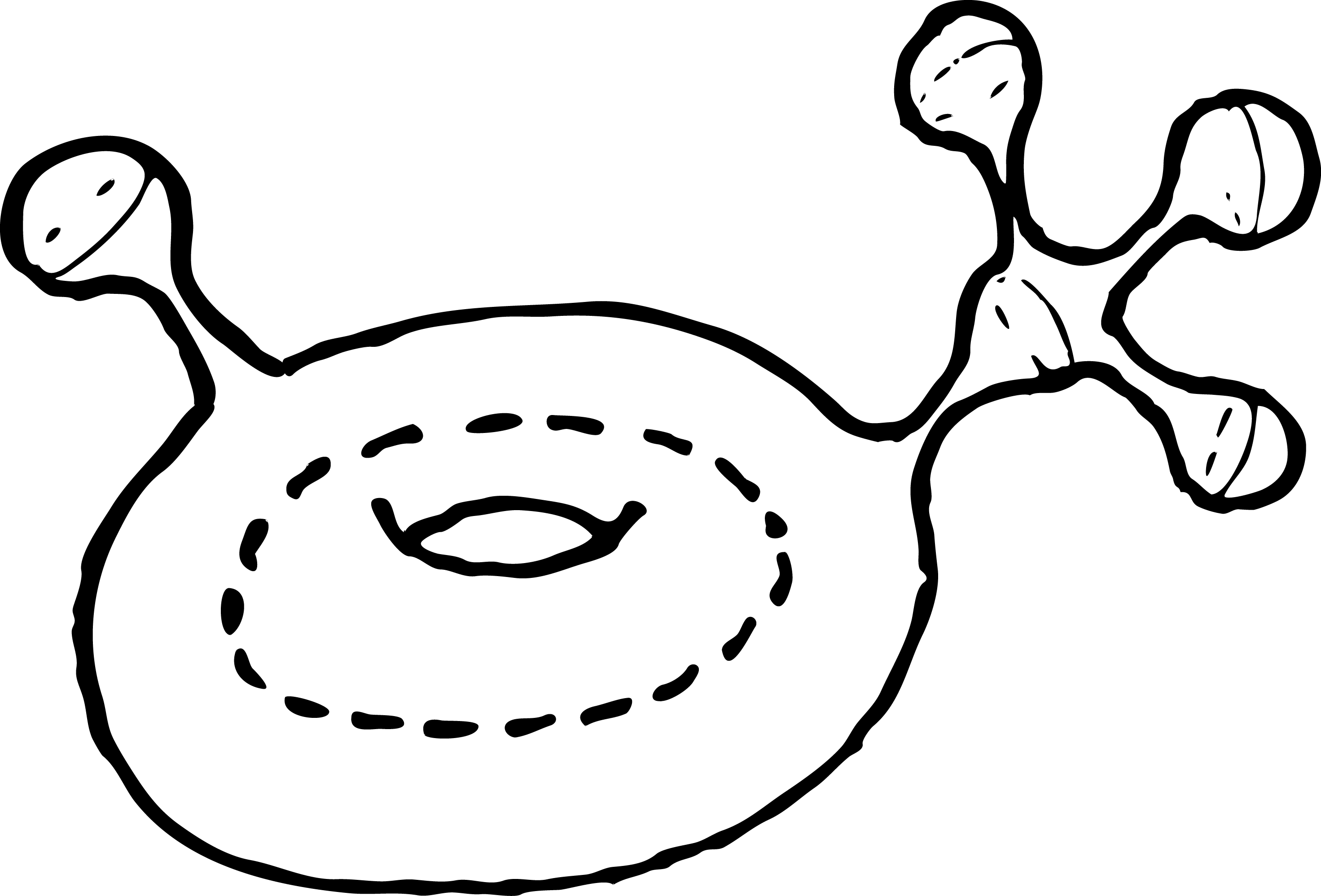}
}
  \caption{A marble tree (left) and an incompressible torus with marble trees attached (right)}
  \label{marbles}
 \end{figure}

Next, we construct global foliations on quasi-Fuchsian manifolds and hyperbolic manifolds that fiber over the circle. The idea is that one can always partition such manifolds by a maximal set of disjoint stable minimal surfaces in the isotopy class of $S\times \{t\}$. This partition can be further refined by adding minimal surfaces of index one via a min-max construction on each region bounded by two locally area-minimizing surfaces. Finally, one can use \Cref{theoremNegative} to foliate the region from an unstable minimal surface towards a stable one. 

\begin{figure}[h]
\centering \includegraphics[width=200pt]{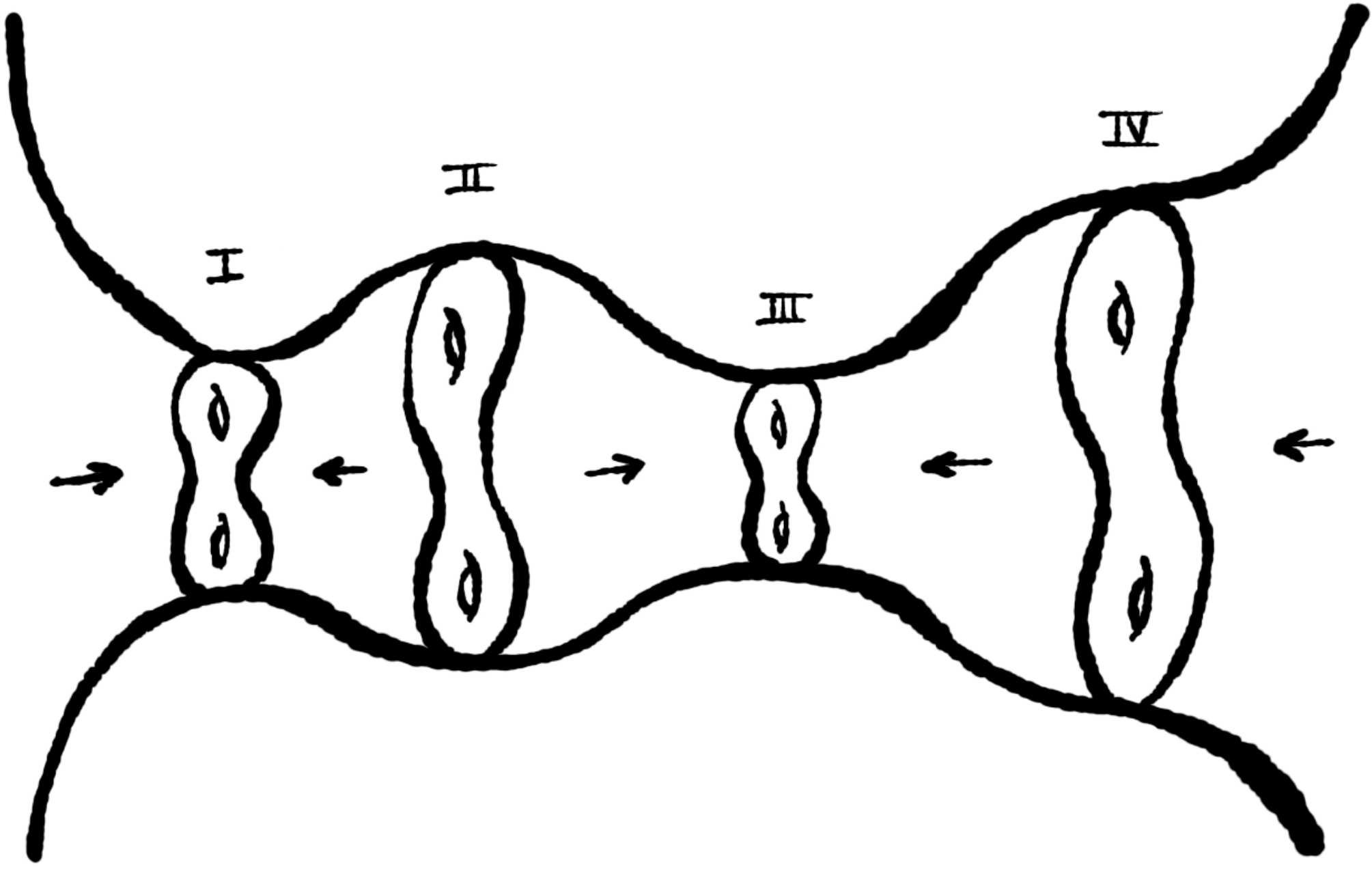}
  \caption{A quasi-Fuchsian manifold. I-IV are minimal surfaces. The arrows indicate the direction of the mean curvature vector of the foliation we construct on the complement of the minimal surfaces. I and IV are outermost. IV is not locally area-mininizing (we show that this is rare, but it happens). II is constructed via a min-max of sweeoputs joining I and III.}
  \label{figQF}
 \end{figure}

In the last section, we study the continuity of outermost minimal surfaces  in quasi-Fuchsian manifolds (I and IV in \Cref{figQF}) with respect to variations of the quasi-Fuchsian metric. These metrics are parametrized by the two conformal classes on $S$ at the boundary at infinity, modulo diffeomorphisms isotopic to the identity. This space is represented as the product of Teichmuller space $\mathcal{T}=\mathcal{T}(S)$ with itself and is diffeomorphic to $\R^{12g-12}$, where $g$ is the genus of $S$. We prove \Cref{theoremOutermost} in several stages in \Cref{secOutermost}. The main observation for the continuity is that a large set of quasi-Fuchsian metrics are what we call \textit{bumpy along the fiber}. This means that there are no degenerate minimal surfaces in the isotopy class of $S\times\{t\}$.
In addition, we derive the existence of discontinuities by combining the analysis from \cite{Uhlenbeck} with the theorem of invariance of domain. As a result we conclude that there are outermost surfaces which are stable but not locally-area minimizing (see surface IV in \Cref{figQF}).

\section{The level set flow}

Let $M$ be a closed manifold and $K\subset M$ a closed set. The \textit{level set flow} starting at $\Gamma$ (also called ``biggest flow'' in \cite{IlmanenShort}) consists of the maximal one-parameter family of closed sets $$t\mapsto F_t(K)$$ starting at $F_0(K)=K$ and satisfying the avoidance principle when compared with any compact classical mean curvature flow (see item (4) below). This property is also described by saying that $F_t(K)$ is \textit{the maximal set-theoretic subsolution of the mean curvature flow} starting at $K$ (see \cite[Subsections 4A and 4H]{IlmanenShort}). 

The following are standard properties of the level set flow. For their proofs, we refer the reader to: \cite{Ilmanen,IlmanenShort,WhiteSize}.

\subsection{Properties}\label{properties} Let $S,K,$ and $K'$ compact subsets of $M$, then:
\begin{enumerate}
\item $\cup_{t\geq 0}F_t(K)\times\{t\}$ is a closed subset of $M\times \R_{\geq 0}$.
\item If $S=\partial K$ is an embedded hypersurface then $\partial F_t(K)$ coincides with the classical mean curvature flow of $S$ up to its first singular time. 
\item $F_t(F_s(K))=F_{t+s}(K)$, for all $t,s\geq 0$.
\item \textit{Avoidance principle}: If $K\cap K'=\emptyset$, then $$F_t(K)\cap F_t(K')=\emptyset \text{ and}$$ $$F_t(K)\cup F_t(K')=F_t(K\cup K'),$$ for all $t\geq 0$.
\item \textit{Inclusion principle}: $K\subset K' \implies F_t(K)\subset  F_t(K')$, for all $t\geq 0$. 
\item \textit{Interior inclusion principle}: $K\subset \operatorname{int} K'\implies  F_t(K)\subset \operatorname{int}  F_t(K'),$ for all $t\geq 0$.
\item \textit{Finite speed}: Let $r_0>0$ and assume $\operatorname{Ric}\geq -C$ on $B(p,r_0)\subset M$. There exists $\tau=\tau(C)>0$, such that $$ F_{t_0}(K)\cap B(p,r_0)=\emptyset \implies F_t(K)\cap \{p\}=\emptyset,$$ for all $t\in[t_0,t_0+\tau]$.
\item \textit{Homology classes}: If $S$ is a non-separating embedded hypersurface, then $F_t(S)$ never vanishes.
\end{enumerate}

Following \cite{WhiteSize}, we say that a compact set $K\subset M$ is \textit{mean-convex} if $$F_t(K)\subset \operatorname{int} K$$ for all small $t$. This is the case if $\partial K$ is \textit{strictly mean-convex}, i.e. smooth and its mean curvature vector is never zero and always points towards the interior of $K$. 

A regularity and structure theory for the level set flow of mean-convex sets was developed by B. White \cite{WhiteSize,WhiteLocal,WhiteTopology} (see also \cite{HaslhoferKleiner1,HaslhoferSingular}). The following theorem summarizes some aspects from \cite[Theorem 1]{WhiteTopology} and \cite[Theorem 11.1]{WhiteSize}.
 
\begin{theorem}\label{theoremWhite}
Let $K \subset M$ be a compact set such that $\partial K$ is smooth and strictly mean-convex. Then
\begin{enumerate} 
\item $F_s(K)\subset \operatorname{int} F_t(K)$, for all $0\leq t< s$.
\item $F_t(\partial K)=\partial F_t( K)$, for all $t\geq 0$.
\item \textit{Outward minimizing property:} $\operatorname{Area}(\partial F_t(K))\leq \operatorname{Area}(\partial S)$, for all $ F_t(K)\subset S\subset \overline{S}\subset \operatorname{int}K$, and $t>0$.
\item $F_t(K)$ is smooth for a.e. $t \geq 0$ and $t$ large enough. 
\item The limit set $K_\infty=\cap_{t\geq 0}F_t(K)$ has finitely many connected components and its boundary $\partial K_\infty$ is a disjoint union of embedded stable minimal surfaces.  
\item As $t\to\infty$, $ F_t(\partial K)$ converges smoothly towards $\partial K_\infty$ with multiplicity $1$ or $2$. More precisely, if $C$ is a component of $K_\infty$, then the convergence towards $\partial C$ is one-sheeted if $\operatorname{int} C\neq \emptyset$ and two-sheeted if $\operatorname{int} C=\emptyset$.
\end{enumerate}
\end{theorem}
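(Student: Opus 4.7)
The plan is to establish properties (1)--(6) in order, combining short-time classical MCF with the avoidance and inclusion principles recalled in \Cref{properties}, and invoking White's regularity and structure theory for mean-convex level-set flows as the main analytic input.

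I would first prove strict monotonicity (1). Since $\partial K$ is smooth and strictly mean-convex, classical short-time MCF pushes $\partial K$ strictly into $\operatorname{int} K$ on a uniform time interval $[0,\tau]$, and this yields $F_\tau(K)\subset\operatorname{int} K$ via property (2) of \Cref{properties}. To upgrade this to $F_s(K)\subset\operatorname{int} F_t(K)$ for all $0\le t<s$, I would foliate a collar of $\partial K$ by nested smooth strictly mean-convex hypersurfaces and use the interior inclusion principle together with the semigroup property $F_s = F_{s-t}\circ F_t$ to propagate the strict nesting past singular times. Property (2) then follows: the inclusion $\partial F_t(K)\subseteq F_t(\partial K)$ holds in general by the avoidance principle applied to small disjoint balls on either side of $\partial K$, and the reverse inclusion (non-fattening) is a consequence of (1), since any interior generated by the flow would contradict strict nesting against a sandwiching pair of classical inner and outer barriers.

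The outward minimizing property (3) I would prove by comparison: one foliates $K\setminus K_\infty$ by (a.e.) smooth level sets of the arrival-time function $u(x)=\sup\{t: x\in F_t(K)\}$, each of which is mean-convex, and uses these as calibrations to bound $\operatorname{Area}(\partial F_t(K))$ by $\operatorname{Area}(\partial S)$ for any competitor sandwiched between $F_t(K)$ and $K$. Properties (4)--(6) are the deep analytic heart of the theorem. White's $C^\infty$ partial regularity theorem bounds the parabolic Hausdorff dimension of the spacetime singular set by $n-1$, which immediately gives smoothness of $\partial F_t(K)$ for a.e.\ $t$ (property (4)). For (5)--(6), Brakke compactness together with the monotonicity formula extracts subsequential limits of $\partial F_t(K)$ as $t\to\infty$ as stationary integer rectifiable varifolds; the strict inward motion forces the convergence to be one-sided on any component $C$ of $K_\infty$ with $\operatorname{int} C\neq\emptyset$ (giving multiplicity $1$) and two-sided when $\operatorname{int} C=\emptyset$ (giving multiplicity $2$). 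Stability of $\partial K_\infty$ follows from the foliation by $\partial F_t(K)$ and the second variation formula, while finiteness of components is obtained from the a priori area bound inherited from (3).

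The main obstacle is (4)--(6): ruling out wild singular behavior of the limit and promoting varifold convergence to smooth multi-sheeted convergence requires the full classification of mean-convex tangent flows and the corresponding uniform curvature estimates from \cite{WhiteSize,WhiteLocal}. Once that classification is in hand, each tangent flow to a point of $\partial K_\infty$ at $t=\infty$ must be a flat multiplicity-one or multiplicity-two plane, and a standard Allard-type upgrade converts this into smooth one-sheeted or two-sheeted convergence.
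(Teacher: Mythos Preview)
The paper does not prove this theorem at all: it is stated as a summary of known results from White's work, with the explicit attribution ``The following theorem summarizes some aspects from \cite[Theorem 1]{WhiteTopology} and \cite[Theorem 11.1]{WhiteSize}'' and no proof given. So there is no ``paper's own proof'' to compare your proposal against; the paper simply cites \cite{WhiteSize,WhiteLocal,WhiteTopology} (and also \cite{HaslhoferKleiner1,HaslhoferSingular}) and moves on.

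Your outline is a reasonable high-level sketch of how White's theory is organized, and you correctly identify that the real content lies in (4)--(6), which require the full tangent-flow classification and curvature estimates from \cite{WhiteSize,WhiteLocal}. A couple of points where your sketch is looser than it should be: your argument for non-fattening in (2) (``any interior generated by the flow would contradict strict nesting'') is not quite the mechanism---non-fattening for mean-convex initial data is usually deduced either from the one-sided minimization/elliptic regularization argument in \cite{WhiteSize} or from the equivalence of inner and outer flows, not directly from (1). Similarly, your calibration argument for (3) presupposes the a.e.\ smoothness from (4), so the logical order is not quite as you present it; in White's treatment the outward-minimizing property is obtained first via elliptic regularization and then feeds into the regularity theory. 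But since the paper treats this theorem as a black box, none of this affects anything downstream.
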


\section{The level set flow in homology}

In this section, we assume that $S\subset M$ is an oriented embedded hypersurface that is non-separating, i.e. $M\setminus S$ is connected. Our first goal is to find a canonical way of relating the evolution $F_t(S)$ to the level set flow of a boundary. As it turns out, this is possible with the aid of a covering space $\pi:\M\to M$ which is associated to the homology class of $S$. This covering space was previously used by Ilmanen in the proof of item (8) from Properties \ref{properties}, see \cite{Ilmanen}. We now construct this covering.

First, one thinks of $\overline{M}=M\setminus S$ as a compact manifold with boundary. Using the orientation of $S$ we divide the boundary components of $\overline{M}$ into two groups $\partial \overline{M}=S'\cup S''$. Components in $S'$ (resp. $S''$) are those with normal vector pointing towards the interior (resp. exterior) of $\overline{M}$. Notice that both $S'$ and $S''$ are diffeomorphic to $S$. Next, we label as $\overline{M}_i$, for $i\in\Z$, disjoint identical copies of $\overline{M}$. We form $\M$ by gluing together these pieces. More precisely:

\begin{definition}\label{coveringdef}
Given $S\subset M$ as above, we denote by $\M$ the connected manifold obtained from $\cup_{i\in \Z}\overline{M}_i$ by gluing together $S''_{i-1}$ with $S'_{i}$, for all $i\in \Z$. The projection given by the inclusion $\overline{M}_i\to M$ is denoted as $$\pi:\M\to M.$$ There is also a natural isometry given by the identity $\overline{M}_i \to \overline{M}_{i+1}$, which we denote as $$\rho: \M\to \M.$$ 
\end{definition} 

Before initiating the discussion on the level set flow, we need to establish some basic properties of $\M$. The following lemma will be useful for producing canonical lifts of disconnected hypersurfaces in the class $[S]$ (see \Cref{remarklift}).

\begin{lemma}\label{lifts}
The map $\pi:\M\to M$ is a regular covering space and $\rho$ generates its group of deck transformations. Moreover, $\M$ depends only on $[S]\in H_{n-1}(M,\Z)$. By this we mean that repeating the construction from \Cref{coveringdef} using any non-separating hypersurface $\Sigma\subset [S]$ yields the same $\M$, up to isometries. \end{lemma}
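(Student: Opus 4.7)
The plan is to check the covering claims directly from the gluing construction, and then to identify $\M$ topologically with the cover classified by a homomorphism $\phi_S : \pi_1(M) \to \Z$ that depends only on $[S] \in H_{n-1}(M,\Z)$.

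First I would verify that $\pi$ is a covering map by exhibiting evenly covered neighborhoods. Away from $S$ this is immediate, since each $\overline{M}_i \setminus (S'_i \cup S''_i)$ maps homeomorphically onto $M \setminus S$. For a point $p \in S$, a small tubular neighborhood $U \subset M$ splits as a union of an ``$S'$-side'' half and an ``$S''$-side'' half meeting along $S \cap U$; in $\M$ these are identified via $S''_{i-1}=S'_{i}$, reproducing a homeomorphic copy of $U$ for each integer $i$, so $\pi^{-1}(U)$ is a disjoint union of open sets each mapped homeomorphically to $U$. The shift $\rho : \overline{M}_i \to \overline{M}_{i+1}$ is a well-defined isometry of $\M$ covering the identity on $M$, and $\langle \rho \rangle \cong \Z$ acts freely and transitively on every fiber of $\pi$; hence the covering is regular with deck group $\langle \rho \rangle$.

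Next I would identify $\pi_1(\M) \subset \pi_1(M)$ as the kernel of a signed-intersection homomorphism. Using the given orientation of $S$, for a smooth loop $\gamma$ in $M$ transverse to $S$ set $\phi_S([\gamma]) := \gamma \cdot S \in \Z$, the algebraic intersection number. A direct lifting analysis along the $\overline{M}_i$ shows that each positively oriented crossing of $S$ by $\gamma$ increases the sheet index by one, so $\gamma$ lifts to a loop in $\M$ iff $\phi_S([\gamma])=0$. Standard transversality yields that $\phi_S$ is a well-defined homomorphism which factors as
\[
\pi_1(M) \twoheadrightarrow H_1(M,\Z) \xrightarrow{\,\cdot\,[S]\,} \Z,
\]
the second arrow being intersection pairing with $[S]$ (equivalently, evaluation of the Poincar\'e dual of $[S] \in H^1(M,\Z) \cong \Hom(H_1(M,\Z),\Z)$). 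In particular $\phi_S$ depends only on the homology class $[S]$.

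For the final clause I would invoke the classification of connected regular covers: two such covers of $M$ are canonically isomorphic as covering spaces iff they correspond to the same normal subgroup of $\pi_1(M)$. If $\Sigma$ is any other non-separating representative of $[S]$, the parallel construction yields a regular $\Z$-cover classified by $\phi_\Sigma = \phi_S$, so there is a deck-equivariant diffeomorphism onto $\M$. Both covers carry the pullback Riemannian metric from $M$, hence this diffeomorphism is automatically an isometry. The only mildly delicate step in the argument is the sign analysis relating the sheet shift to the algebraic intersection number; everything else reduces to the standard classification of $\Z$-covers by $H^1(M,\Z)$ via Poincar\'e duality.
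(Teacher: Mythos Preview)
Your proof is correct and follows essentially the same route as the paper: identify $\pi_*(\pi_1(\M))$ with the kernel of the algebraic-intersection homomorphism $\phi_{[S]}:\pi_1(M)\to\Z$ (which factors through $H_1$ and depends only on $[S]$ via Poincar\'e duality), then invoke the classification of regular covers. You are a bit more explicit than the paper in directly checking the evenly-covered neighborhoods and in remarking that the covering isomorphism is an isometry because both sides carry the pullback metric, but the argument is the same.
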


\begin{proof} Every class $[S]\in H_{n-1}(M,\Z)$ has an associated $\Phi_{[S]}\in H^1(M,\Z)$ via  Poincar\'e duality. The homomorphism $$\Phi_{[S]}:H_1(M,\Z) \to \Z$$ is the \textit{algebraic intersection number} of the class $[S]$. To compute $\Phi_{[S]}([\gamma])$, for $[\gamma]\in H_1(M,\Z)$, one first chooses smooth and transversal representatives $S$ and $\gamma$, so that $S\cap \gamma$ is a discrete set of points with signs determined by the orientations of $M$, $S$ and $\gamma$. It is a classical result that adding these signs together produces an integer $\Phi_{[S]}([\gamma])$ that depends only on the classes $[S]$ and $[\gamma]$, but not on the representatives $S$ and $\gamma$. 

Since $H_1(M,\Z)$ is the abelianization of $\pi_{1}(M)$, this induces an homomorphism $\phi_{[S]}:\pi_1(M)\to \Z$. The subgroup $\ker \phi_{[S]} \subset \pi_1(M)$ consists of exactly those closed loops that have intersection number zero with $[S]$, as 1-cycles.

\begin{claim*}
$\pi_*(\pi_1(\M))=\ker \phi_{[S]}.$
\end{claim*}

Let $\widetilde{\gamma}:[0,1]\to \M$ be a continuous path with end points on the same fiber, i.e. $\pi(\widetilde{\gamma}(0))=\pi(\widetilde{\gamma}(1))$, so that $\gamma=\pi(\widetilde{\gamma})$ is a closed loop on $M$.  Then, if $\widetilde{\gamma}(0) \in \overline{M}_{i_0}$ and $\widetilde{\gamma}(0) \in \overline{M}_{i_1}$, the intersection number $\phi_{[S]}([\gamma])$ is exactly $i_1-i_0$. For closed loops in $\M$, we must have $i_0=i_1$. This shows $\pi_*(\pi_1(\M))\subset \ker \phi_{[S]}$. For the same reason, if $\phi_{[S]}([\gamma])=0$, any lift of $\gamma$ is a closed loop in $\M$. This shows the other contention and proves the claim.

Finally, notice that $\ker \phi_{[S]}$ is a normal subgroup of $\pi_1(M)$, so it determines a unique regular covering $\M$ with deck transformation group $\pi_1(M)/\ker \phi_{[S]}\simeq\Z$. Since $\ker \phi_{[S]}$ depends only on $[S]$, the classification of covering spaces implies that repeating the construction above with another non-separating hypersurface in $[S]$ would produce the same covering space, up to an isometry. 
\end{proof}

\begin{remark}\label{remarklift}
When considering a lift $\widetilde{S}$ of a non-separating hypersurface $S$, we have to be careful where different connected components go to if we want lifts to be homologous on $\M$. The construction above gives us
$\pi^{-1}(S)$ as the disjoint union of $S''_{i-1}=S'_i$, $i\in\Z$, all diffeomorphic to $S$. From now on, we assume $$\widetilde{S}:=S'_0.$$ We also define the closed ends $$\widetilde{S}^+:=\cup_{i\geq 0} \overline{M}_i$$ and
$$\widetilde{S}^-:=\cup_{i< 0} \overline{M}_i.$$ If $\Sigma \in [S]$ is non-separating, then we define $\widetilde{\Sigma}$ as the lift given by the construction above when applied to $\Sigma$ and shifted through multiples of $\rho$ so that it intersects $M_0$ but not $M_i$, $i< 0$. We define the closed ends $\widetilde{\Sigma}^+$ and $\widetilde{\Sigma}^-$ accordingly, so that their boundary is $\widetilde{\Sigma}$.
\end{remark}

\begin{lemma}\label{minimalbarriers}
Let $\Sigma$ be a hypersurface minimizing the area in $[S]$. Then, $\Sigma$ is non-separating and lifts to $\widetilde{\Sigma}\subset \M$ in the sense of \Cref{remarklift}. Moreover, for $k\in\N$ large enough, $$X_{k}:=\rho^{-k}( \widetilde{\Sigma}^+)\cap \rho^{k}(\widetilde{\Sigma}^-)$$ is a smooth compact with minimal boundary and $$\widetilde{S}\subset \operatorname{int} X_k.$$
\end{lemma}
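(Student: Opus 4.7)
The lemma decomposes into two quite different pieces. The main constructive piece is the slab $X_k$: once we know $\widetilde{\Sigma}$ exists, it occupies only finitely many layers $\overline{M}_i$ of the $\Z$-cover $\M$, so pushing it down by $\rho^{-k}$ and up by $\rho^{k}$ eventually pulls the two copies apart and bounds a compact region. The preliminary piece is to show $\Sigma$ is smooth and non-separating, so that Remark~\ref{remarklift} applies.

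\textbf{Step 1 (regularity and non-separating).} Since $3\leq \dim M\leq 7$, Federer's regularity theorem for codimension-one area-minimizing integer currents makes $\Sigma$ a smooth minimal hypersurface. For non-separating, suppose $M\setminus\Sigma$ splits into components $U_1,\dots,U_m$ with $m\geq 2$, and consider the finite oriented multigraph $G$ whose vertices are the $U_j$ and whose edges are the components of $\Sigma$ directed by the chosen orientation of $\Sigma$. Using $[\Sigma]=[S]\neq 0$, one locates a non-empty proper subset $V$ of vertices such that every edge of $G$ meeting $\partial V$ is directed from $V$ to $V^{c}$ (take $V$ to be a source in the condensation of $G$; otherwise the graph being strongly connected with the prescribed orientation leads to a forbidden multiplicity structure of $[\Sigma]$). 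Replacing $\Sigma$ by the current $\Sigma-\partial\bigl(\bigcup_{U\in V}U\bigr)$ then preserves the class $[S]$ while cancelling the mass on the outgoing edges from $V$ and adding nothing elsewhere, strictly reducing area and contradicting minimality. Hence $M\setminus\Sigma$ is connected, and Remark~\ref{remarklift} yields $\widetilde{\Sigma}\subset\M$ with its closed ends $\widetilde{\Sigma}^{\pm}$.

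\textbf{Step 2 (the slab $X_k$).} The lift $\widetilde{\Sigma}$ is the compact isometric copy of $\Sigma$ produced by applying Definition~\ref{coveringdef} to $\Sigma$. By the normalization of Remark~\ref{remarklift}, $\widetilde{\Sigma}$ meets $\overline{M}_{0}$ but no $\overline{M}_{i}$ with $i<0$, so compactness gives
\[\widetilde{\Sigma}\ \subset\ \overline{M}_{0}\cup\overline{M}_{1}\cup\cdots\cup\overline{M}_{N}\]
for some integer $N\geq 0$. Since $\rho$ is an isometry sending $\overline{M}_{i}$ to $\overline{M}_{i+1}$,
\[\rho^{-k}(\widetilde{\Sigma})\subset\bigcup_{i=-k}^{N-k}\overline{M}_{i},\qquad \rho^{k}(\widetilde{\Sigma})\subset\bigcup_{i=k}^{N+k}\overline{M}_{i},\]
and these two hypersurfaces are disjoint as soon as $k>N$. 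For such $k$, $X_{k}$ is the closed slab in $\M$ bounded on one side by $\rho^{-k}(\widetilde{\Sigma})$ and on the other by $\rho^{k}(\widetilde{\Sigma})$; it sits inside $\bigcup_{i=-k}^{N+k}\overline{M}_{i}$ and is therefore compact, while its boundary $\partial X_{k}=\rho^{-k}(\widetilde{\Sigma})\sqcup\rho^{k}(\widetilde{\Sigma})$ is a disjoint union of smooth minimal hypersurfaces (minimality transported by the isometry $\rho$). Finally $\widetilde{S}$ lies on the shared boundary of $\overline{M}_{-1}$ and $\overline{M}_{0}$, which for $k>N$ is disjoint from $\partial X_{k}$ and contained in the interior of the slab, so $\widetilde{S}\subset\operatorname{int}X_{k}$.

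\textbf{Main obstacle.} Step~2 is clean bookkeeping once one knows that $\widetilde{\Sigma}$ is confined to finitely many layers and that $\rho$ shifts them by one; the real work is Step~1, where the non-separating property requires a careful combinatorial analysis of how the components of $\Sigma$ could cut $M$, combined with the area-minimizing decomposition for integer currents to exclude ``redundant'' separating configurations.
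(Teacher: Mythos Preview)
Your proof is correct and follows the same route as the paper, but with the emphasis inverted. The paper disposes of the non-separating claim in one line: if some nonempty subcollection of components of $\Sigma$ were null-homologous one could delete it and strictly decrease area in $[S]$, so no such subcollection exists and $M\setminus\Sigma$ is connected. Your directed-graph/condensation apparatus is an elaboration of exactly this idea---your ``source in the condensation'' is precisely a region $W=\bigcup_{U\in V}U$ whose current boundary carries only $+1$ coefficients on components of $\Sigma$, i.e.\ the deletable subcollection. The extra machinery does not actually settle the point you flag as the ``main obstacle'' (the strongly-connected case is still hand-waved), so it buys no rigor beyond the paper's terse version. Conversely, your Step~2 spells out carefully what the paper dismisses with ``follows directly from Remark~\ref{remarklift} and Definition~\ref{coveringdef}''; that part is correct and more informative. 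In short, you have identified Step~1 as ``the real work'' whereas the paper treats it as the routine observation and leaves Step~2 implicit.
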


\begin{proof}
Although $\Sigma$ might be disconnected, no combination of its components forms a boundary. Otherwise we would produce a competitor for the area in the same homology class after deleting those components. Therefore, $M\setminus \Sigma$ is connected, i.e. $\Sigma$ is non-separating. The rest of the statement follows from directly from \Cref{remarklift} and \Cref{coveringdef}.
\end{proof}

In \cite{Ilmanen}, Ilmanen showed that the level set flow $F_t(S)$  does not vanish (see (8) in Properties \ref{properties}). The following result implies that its lift to $\M$ is restricted to a compact region. Roughly speaking, this can be interpreted as saying that the evolution of $S$ cannot spin around $M$ indefinitely. 

\begin{proposition}\label{projectswell}
 $\pi$ is a diffeomorphism on a neighborhood of $F_t(\widetilde{S})$. Moreover, $\pi(F_t(\widetilde{S}))=F_t(S)$ and $F_t(\widetilde{S})\subset \operatorname{int}X_k$.
\end{proposition}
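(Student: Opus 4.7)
The plan is to first confine $F_t(\widetilde{S})$ inside $\operatorname{int} X_k$ using the minimal barrier $\partial X_k$, then to use equivariance under the deck action $\rho$ to pass to a neighborhood on which $\pi$ is injective, and finally to identify $\pi(F_t(\widetilde{S}))$ with $F_t(S)$ via naturality of the level set flow under local isometries.

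\textbf{Confinement.} By \Cref{minimalbarriers}, $\partial X_k = \rho^{-k}\widetilde{\Sigma} \cup \rho^k\widetilde{\Sigma}$ is a smooth minimal hypersurface, so the classical mean curvature flow of $\partial X_k$ is stationary. Property (2) therefore gives $\partial F_t(X_k) = \partial X_k$ for all $t \geq 0$; combining this with $F_0(X_k) = X_k$, the semigroup property (3), and short-time continuity of the level set flow forces $F_t(X_k) = X_k$ for all $t$. The interior inclusion principle (Property (6)) applied to $\widetilde{S} \subset \operatorname{int} X_k$ now yields
$$F_t(\widetilde{S}) \subset \operatorname{int} F_t(X_k) = \operatorname{int} X_k, \quad t \geq 0,$$
which is the third claim. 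In particular, $F_t(\widetilde{S})$ is compact.

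\textbf{Injectivity on a neighborhood.} For each $j \neq 0$, $\widetilde{S} = S'_0$ and $\rho^j \widetilde{S} = S'_j$ are distinct (hence disjoint) components of $\pi^{-1}(S)$. Since $\rho$ is an isometry of $\widetilde{M}$, equivariance of the level set flow together with the avoidance principle (Property (4)) give
$$F_t(\widetilde{S}) \cap \rho^j F_t(\widetilde{S}) = F_t(\widetilde{S}) \cap F_t(\rho^j \widetilde{S}) = \emptyset \quad \text{for every } j \neq 0.$$
Because $F_t(\widetilde{S})$ is compact and $\Z$ acts properly discontinuously on $\widetilde{M}$, one finds an open neighborhood $U$ of $F_t(\widetilde{S})$ with $U \cap \rho^j U = \emptyset$ for every $j \neq 0$: positive distance between compact sets handles the finitely many translates meeting a fixed compact enlargement of $X_k$, while the remaining ones lie uniformly far from $F_t(\widetilde{S})$. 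Hence $\pi|_U$ is an injective local diffeomorphism, and therefore a diffeomorphism onto $\pi(U)$.

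\textbf{Identification with $F_t(S)$.} Since $\pi|_U$ is an isometry, $t \mapsto \pi(F_t(\widetilde{S}))$ inherits avoidance against every compact classical MCF in $M$ and is therefore a set-theoretic subsolution of MCF in $M$ starting at $S$; maximality of $F_t(S)$ gives $\pi(F_t(\widetilde{S})) \subset F_t(S)$. For the reverse inclusion, by naturality of the level set flow under the local isometry $\pi$, one has $\pi^{-1}(F_t(S)) = F_t(\pi^{-1}(S))$; using that $\pi^{-1}(S) = \bigsqcup_{j} \rho^j \widetilde{S}$ is a disjoint union of compact lifts, Property (4) together with equivariance yield $F_t(\pi^{-1}(S)) = \bigsqcup_{j} \rho^j F_t(\widetilde{S})$, and projecting gives $F_t(S) = \pi(F_t(\widetilde{S}))$. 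The principal subtlety is precisely this final transfer of the level set flow across the non-compact cover $\widetilde{M}$: the compactness secured in Step 1 and the diffeomorphism neighborhood of Step 2 are what localize the comparison so that avoidance for compact classical test flows suffices to identify the two flows.
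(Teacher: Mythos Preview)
Your proof is correct and follows essentially the same approach as the paper: disjointness of deck translates of $F_t(\widetilde{S})$ via avoidance, confinement in $\operatorname{int} X_k$ via the interior inclusion principle against the minimal barrier $\partial X_k$, and identification of the projection with $F_t(S)$ by uniqueness of the level set flow. The paper orders the steps differently (confinement last) and is terser at the identification step, simply asserting that the projection through the local isometry is \emph{a} level set flow and invoking uniqueness; your more explicit Step~3 is fine, though your appeal to Property~(4) and to $F_t(\pi^{-1}(S))$ for the non-compact set $\pi^{-1}(S)$ tacitly uses the standard extension of avoidance and of the level set flow beyond the compact setting stated in Properties~\ref{properties}.
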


\begin{proof}
The hypersurfaces $\rho^{i}(\widetilde{S})$, for $i\in \Z$, are disjoint by construction. Then, by the avoidance principle, $F_t(\rho^{i}(\widetilde{S}))=\rho^{i}(F_t(\widetilde{S}))$ are also disjoint. This implies that $\pi$ is a diffeomorphism on a neighborhood of $F_t(\widetilde{S})$. Therefore, its projection to $M$ is a level set flow, and by uniqueness $$\pi(F_t(\widetilde{S}))=F_t(S).$$  \Cref{minimalbarriers} implies $\widetilde{S}\subset \operatorname{int}X_k$ and by construction $X_k$ is a smooth compact region with minimal boundary, so $F_t(X_k)=X_k$. The last statement then follows from the interior inclusion principle.
\end{proof}

From the proof above and a simple compactness argument, we deduce the following observation that will be useful when proving that neck surgeries on $\M$ project well to $M$ as long as the neck scale is small enough (see \Cref{surgeryprojects}).

\begin{corollary}\label{diffeoclose}
For every $T>0$, there exists $\e>0$ such that $\pi$ is a diffeomorphism on a neighborhood of $\cup_{s\in[t,t+\e]}F_{s}(\widetilde{S})$, for all $0 \leq t \leq T.$
\end{corollary}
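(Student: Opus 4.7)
The plan is to quantify the argument used in the proof of \Cref{projectswell} so that the neighborhood on which $\pi$ is a diffeomorphism can be chosen uniformly over a short time window. By construction, the hypersurfaces $\rho^i(\widetilde{S})$ are pairwise disjoint, so the avoidance principle gives $F_s(\rho^i(\widetilde{S})) = \rho^i(F_s(\widetilde{S}))$ and these are pairwise disjoint for every $s\geq 0$. Consequently, injectivity of $\pi$ on an open set $U \subset \M$ is equivalent to $U \cap \rho^i(U) = \emptyset$ for all $i \neq 0$, and it suffices to produce $\eta > 0$ with
\[
\operatorname{dist}_\M\bigl(F_s(\widetilde{S}),\, \rho^i(F_s(\widetilde{S}))\bigr) \geq \eta \quad \text{for all } s \in [0,T+1],\ i \neq 0,
\]
together with a uniform control on how fast the flow can spread in $\M$ over a short time.

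For the uniform separation, I would combine the compact containment $F_s(\widetilde{S}) \subset X_k$ from \Cref{projectswell} with the closedness of the space-time track (item~(1) of \Cref{properties}). Only finitely many translates $\rho^i(X_k)$ intersect $X_k$, say for $i \in I$, since for $|i|$ large $\rho^i(X_k)$ is pushed arbitrarily far; the remaining indices give a positive lower bound directly. For each of the finitely many $i \in I \setminus \{0\}$, the fiber product
\[
B_i = \bigl\{ (p,q,s) \in \M \times \M \times [0,T+1] : p \in F_s(\widetilde{S}),\ q \in \rho^i(F_s(\widetilde{S})) \bigr\}
\]
is a closed subset of the compact set $X_k \times \rho^i(X_k) \times [0,T+1]$, hence itself compact. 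The continuous function $(p,q,s) \mapsto \operatorname{dist}_\M(p,q)$ is strictly positive on $B_i$ by avoidance, so it attains a positive minimum $\eta_i > 0$. Taking $\eta$ to be the minimum of these finitely many $\eta_i$ together with the lower bound coming from the indices outside $I$ produces the desired uniform gap.

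For the time control, I would invoke the finite speed property (item~(7) of \Cref{properties}) within the compact region $\bigcup_{i \in I} \rho^i(X_k)$, where the Ricci curvature is uniformly bounded below by some $-C$. Fix a scale $r_0 < \eta/3$ and let $\tau = \tau(C) > 0$ be the associated time. The contrapositive of (7) applied to the compact sets $F_s(\widetilde{S})$ says that every point of $F_{s'}(\widetilde{S})$ lies within distance $r_0$ of $F_s(\widetilde{S})$ whenever $0 \leq s' - s \leq \tau$. Setting $\varepsilon := \tau$ and
\[
U := \Bigl\{ p \in \M : \operatorname{dist}\Bigl(p,\, \bigcup_{s \in [t,t+\varepsilon]} F_s(\widetilde{S})\Bigr) < \eta/3 \Bigr\},
\]
a triangle inequality argument using that $\rho^i$ is an isometry shows $U \cap \rho^i(U) = \emptyset$ for every $i \neq 0$, so $\pi|_U$ is injective and hence a diffeomorphism onto its image. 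Since both $\eta$ and $\tau$ were chosen uniformly over $[0,T+1]$, the resulting $\varepsilon$ is independent of $t \in [0,T]$.

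The main obstacle, in my view, is the uniform separation step: a pointwise-in-$s$ disjointness statement coming from avoidance must be upgraded to a positive uniform gap, and this requires simultaneously using the compact containment inside $X_k$ (from \Cref{projectswell}) and the closedness of the space-time track (from \Cref{properties}). Once that is in place, the finite speed property packages cleanly into the uniform $\varepsilon$.
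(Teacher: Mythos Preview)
Your argument is correct and is precisely the kind of compactness argument the paper alludes to (the paper gives no detailed proof here, only the phrase ``from the proof above and a simple compactness argument''). One numerical slip: with $r_0 < \eta/3$ and $U$ the $\eta/3$-neighborhood, the triangle inequality only yields $\operatorname{dist}\bigl(F_t(\widetilde S),\rho^i(F_t(\widetilde S))\bigr) < 4\eta/3$, which does not contradict the lower bound $\eta$; taking both $r_0$ and the neighborhood radius below $\eta/4$ fixes this immediately.
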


Another useful consequence of the minimal barriers from \Cref{minimalbarriers}, is that the level set evolution of certain non-compact closed sets behaves exactly like that of compact sets. More precisely:

\begin{proposition}\label{closedcompact}
Properties \ref{properties} and \Cref{theoremWhite} hold on the category of closed sets $K\subset \M$ such that $$\rho^{-i}(\widetilde{\Sigma}^+) \supset K \supset \rho^i(\widetilde{\Sigma}^+),$$ for some $i\in \N$.
\end{proposition}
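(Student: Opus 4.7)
The plan is to exploit the minimal hypersurfaces $\partial \rho^{\pm i}(\widetilde{\Sigma}^+)$ from \Cref{minimalbarriers} as fixed barriers that confine any such evolution to a compact region. Being smooth closed minimal hypersurfaces in $\M$, they are stationary solutions of classical mean curvature flow, and the usual avoidance applied to compact classical approximations yields the sandwich
\[
\rho^{i}(\widetilde{\Sigma}^+) \subset F_t(K) \subset \rho^{-i}(\widetilde{\Sigma}^+), \qquad t \geq 0.
\]
Thus $F_t(K)$ agrees with $\rho^{i}(\widetilde{\Sigma}^+)$ outside the compact set $X_i = \rho^{-i}(\widetilde{\Sigma}^+)\cap\rho^{i}(\widetilde{\Sigma}^-)$ for every $t$, and the whole evolution is determined by what happens inside a compact region whose outer and inner boundaries are stationary minimal hypersurfaces.

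With this confinement in hand, each item of Properties \ref{properties} reduces to the compact case. Items $(1)$--$(3)$ are definitional or local; the inclusion and avoidance principles $(4)$--$(6)$ compare closed sets that only differ inside $X_i$, so the standard compact proofs apply there; $(7)$ is already local; and $(8)$ is vacuous in $\M$ since lifts of non-separating hypersurfaces are separating. For \Cref{theoremWhite}, assuming $\partial K$ is smooth and strictly mean-convex, mean convexity is a local condition, and $K\cap X_i$ is a mean-convex compact set whose boundary consists of the strictly mean-convex piece $\partial K$ together with a stationary minimal piece along $\partial X_i$. The conclusions of \Cref{theoremWhite} -- strict monotonicity, the identity $F_t(\partial K) = \partial F_t(K)$, the outward minimizing property, a.e.\ smoothness, the finite-component limit $K_\infty$ with minimal boundary, and smooth $1$- or $2$-sheeted convergence to $\partial K_\infty$ -- all describe behaviour localized in $X_i$, where they follow from White's compact theory applied to $K\cap X_i$.

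The main obstacle is the interaction of White's arguments with the stationary minimal portions of $\partial(K\cap X_i)$ lying along $\partial X_i$. I would handle this either by regarding $\partial X_i$ as a fixed barrier and invoking the boundary-avoidance form of White's regularity theory, or by approximating the barrier with a one-parameter family of strictly mean-convex hypersurfaces $\partial X_i^{\varepsilon}$ obtained from a small inward perturbation, applying the compact theory to $K\cap X_i^{\varepsilon}$, and passing to the limit $\varepsilon\to 0$ using the semigroup property and smooth convergence away from $\partial X_i$. Both routes are standard once the flow has been trapped, so the proposition is essentially a bookkeeping statement: the minimal barriers turn a non-compact problem into a compact one to which the established theory applies verbatim.
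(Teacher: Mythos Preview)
Your overall strategy coincides with the paper's: use the minimal barriers $\rho^{\pm i}(\widetilde\Sigma)$ to trap the evolution in the compact slab $X_i$ and then invoke the compact theory. The difference is in how the reduction is made precise. You establish only the sandwich $\rho^{i}(\widetilde\Sigma^+)\subset F_t(K)\subset \rho^{-i}(\widetilde\Sigma^+)$ and then assert that White's conclusions for $F_t(K\cap X_i)$ transfer to $F_t(K)$. But the sandwich alone does not give you the identity $F_t(K)\cap X_i = F_t(K\cap X_i)$; one inclusion follows from the inclusion principle, but the reverse inclusion does not. The paper closes this gap by observing that $t\mapsto F_t(K)\setminus\operatorname{int}\rho^{i}(\widetilde\Sigma^+)$ is itself a set-theoretic subsolution starting at $K\cap X_i$, hence contained in the \emph{biggest} such flow $F_t(K\cap X_i)$. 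This yields the exact decomposition
\[
F_t(K)=F_t(K\cap X_i)\cup \rho^{i}(\widetilde\Sigma^+),
\]
after which everything really is a statement about a single compact level set flow. Your perturbation/barrier workarounds could in principle be made to yield the same identity, but they are considerably heavier than the one-line subsolution argument, and as written they address a different issue (the minimal piece of $\partial(K\cap X_i)$) rather than the missing inclusion.

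On that second issue you are in fact more scrupulous than the paper: White's theorem as stated asks for $\partial K$ smooth and strictly mean-convex, while $\partial(K\cap X_i)$ carries the stationary minimal piece $\rho^{i}(\widetilde\Sigma)$. The paper simply says the proposition ``follows easily'' from the decomposition. The honest justification is that the two boundary pieces are disjoint and the minimal one is fixed for all time, so one may enlarge $i$ to push it arbitrarily far from the evolving piece; White's regularity and structure statements are local near $\partial F_t(K)=F_t(\partial K)$, which never meets $\rho^{i}(\widetilde\Sigma)$. Your approximation by strictly mean-convex $\partial X_i^\varepsilon$ would also work, but once the decomposition above is in hand it is unnecessary.
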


\begin{proof}
The level set flow $F_t(K)$ of a closed set $K$ is well-defined, unique and exists for all time, as long as its boundary is a compact region (see \cite{IlmanenShort,Ilmanen}). This is the case in the class of closed sets under consideration. 

Since $\partial \widetilde{\Sigma}^+=\widetilde{\Sigma}$ is a smooth minimal hypersurface, we have $F_t(\widetilde{\Sigma}^+)=\widetilde{\Sigma}^+$. Let $X_i$ be as in \Cref{minimalbarriers}. From the definition of the level set flow, it is not hard to see that $F_t(K)\setminus \operatorname{int} \rho^{i}(\widetilde{\Sigma}^+)$ is a set-theoretic subsolution of the mean curvature flow, in the sense of \cite{IlmanenShort}, starting at $K\cap X_i$. In particular, $F_t(K)\setminus \operatorname{int} \rho^{i}(\widetilde{\Sigma}^+)\subset F_t(K\cap X_i).$ However, from the inclusion principle and the definition of $X_i$, we also have $F_t(K\cap X_i) \subset F_t(K)\setminus \operatorname{int} \rho^{i}(\widetilde{\Sigma}^+)$. Together, these imply $$F_t(K)=F_t(K\cap X_i) \cup \rho^i(\widetilde{\Sigma}^+).$$ In other words, the level set flow of $K$ is the union of a compact level set flow and a fixed closed set. The proposition now follows easily.
\end{proof}

This has consequences for the level set flow of non-separating hypersurfaces in $M$ with non-vanishing mean curvature vector. 

\subsection{Proof of Theorem \Cref{theoremHomology}}\label{proofhomology}
\begin{proof}
We choose the orientation of $S$ given by the mean curvature vector so that $\widetilde{S}^+$ is a smooth strictly mean-convex closed set. From \Cref{closedcompact} and \Cref{theoremWhite}, $F_t(\widetilde{S})=\partial F_t(\widetilde{S}^+)$ is smooth for a.e $t$ and $t$ large enough and converges to an embedded stable minimal hypersurface $\widetilde{\Gamma}\subset \M$. 
From \Cref{projectswell} and \Cref{diffeoclose}, this implies that $F_t(S)=\pi(F_t(\widetilde{S}))$ is smooth for a.e $t$ and $t$ large enough. 

We claim that $\Gamma=:\pi(\widetilde{\Gamma})$ is also an embedded minimal hypersurface. Remember $\widetilde{\Gamma}=\partial(\cap_{t\geq 0}F_t(\widetilde{S}^+)).$ Let $i<j\in \Z$, then $\rho^i(\widetilde{S}^+)\subset \rho^j(\widetilde{S}^+)$ and by the inclusion principle $$\rho^i(F_t(\widetilde{S}^+))\subset \rho^j(F_t(\widetilde{S}^+))$$ holds for all $t\geq 0$. Therefore, $$\rho^i(\cap_{t\geq 0}F_t(\widetilde{S}^+)) \subset \rho^{j}(\cap_{t\geq 0}F_t(\widetilde{S}^+)).$$ In particular, components of $\rho^i(\widetilde\Gamma)$ are contained in regions bounded by components of $\rho^j(\widetilde\Gamma).$ It follows that they are either disjoint or intersect tangentially, locally staying on one side from each other. In the latter case, the maximum principle for minimal hypersurfaces implies they must coincide. In other words, when we translate $\Gamma$ by multiples of $\rho$, intersections might occur, but they are always embedded (e.g. this situation is illustrated in \Cref{figmult3}). It follows that $$\cup_{i\in\Z}\rho^i(\widetilde\Gamma)$$ is an embedded hypersurface of $\M$. Therefore, so is $\Gamma=\pi(\widetilde{\Gamma})$.

Finally, to see that $F_t(S)$ does not vanish, notice that $\partial F_t(\widetilde{S}^+)$ does not vanish because $\rho^{k}(\widetilde{\Sigma}^+)\subset \widetilde{S}^+\subset \rho^{-k}(\widetilde{\Sigma}^+)$ (notice this also follows from item (8) of Properties \ref{properties} by a different argument, see \cite{Ilmanen}).

\end{proof}

\section{Mean-convex mean curvature flow with surgery}

We begin covering fundamental aspects of the mean-convex mean curvature flow with surgery for surfaces due to Haslhofer-Kleiner \cite{HaslhoferKleiner2} (another version of this flow was developed around the same time by Brendle-Huisken in \cite{BrendleHuiskenR3,BrendleHuisken3m}). Our presentation follows the one in \cite{HaslhoferKetover} which is made in the context of Riemannian manifolds (see also \cite{LiokumovichMaximo}).

The existence theory of \cite{HaslhoferKleiner2,HaslhoferKetover} is formulated using a class of flows called $(\alpha,\delta,\mathbb{H})$-flows.  We know explain these parameters. Roughly speaking, $\alpha$ is determined by the extrinsic geometry of the initial domain. The other two parameters can be chosen. The number $\delta$ denotes the fineness/regularity of the surgery necks. The triple $\mathbb{H}=(H_{trig},H_{neck},H_{th})$, with $H_{trig}>H_{neck}>H_{th}>1$, specifies when and where surgeries are performed. More precisely, when the maximum of the mean curvature reaches $H_{trig}$, surgeries are performed on a disjoint collection of necks with curvature $H_{neck}$. These necks separate the \textit{trigger part} $\{H=H_{trig}\}$ from the \textit{thick part} $\{ H=H_{th}\}$. After surgeries are performed components containing the trigger part are discarded. The flow is then able to continue until the maximum of the mean curvature reaches $H_{trig}$ once more. 

We present the definitions necessary to describe the surgeries performed on the flow with surgery of \cite{HaslhoferKetover}.

\begin{definition}\label{strongneck}(see Definition 7.2,  \cite{HaslhoferKetover})
Let $U$ be an open region of $M$. We say that a smooth family of smooth domains moving my mean curvature flow $\{K_t\subset U\}_{t\in I}$ has a \textit{strong $\delta$-neck} with center $p$ and radius $s$ at time $t_0$, if $4s/\delta \leq \operatorname{inj}(M)$ and the following condition holds:\\
\begin{itemize}
\item \emph{$\{s^{-1}\cdot \exp_p^{-1}(K_{t_0+s^2t}-p)\cap B_{2s/\delta}(p)\cap U\}_{t\in(-1,t_0]}$ is $\delta$-close in the $C^{[1/\delta]}(V)$-norm to the evolution of a solid round cylinder $D^{2}\times \R$ whose radius is equal to $1$ at $t=0$,}
\end{itemize}
where $V=B_{1/\delta}\cap \big[s^{-1}\cdot \exp_p^{-1}(B_{2s/\delta}\cap U)\big] \subset \mathbb{R}^3$.
\end{definition}

\begin{definition}\label{caps}(see Definition 2.8, \cite{BuzanoSpheres})
A \textit{standard cap} is a smooth convex domain $K^{st}\subset \mathbb{R}^3$ such that 
\begin{enumerate}
\item $K^{st}\cap\{x_1>0.01\}=B_1(0)\cap \{x_1>0.01\}$.
\item $K^{st}\cap\{x_1<-0.01\}$ is a solid round half-cylinder of radius 1.
\item $K^{st}$ is given by revolution of a function $u^{st}:(-\infty,1]\to \R_{+}.$
\end{enumerate}
\end{definition}

\begin{definition}\label{replacing}(see Definition 2.4 \cite{HaslhoferKleiner2} and Definition 6.2 \cite{BuzanoSpheres})
Let $K^-$ be the final slice of a smooth flow having a strong $\delta$-neck with center $p$ and radius $s$ contained on an open set $U$, and $K^\sharp \subset K^-$ a region with smooth boundary. We say that  \textit{$K^\sharp$ is obtained from $K^-$ by replacing the strong $\delta$-neck for a pair of standard caps}, if the following holds:
\begin{enumerate}
\item $K^-\setminus K^\sharp$ is contained in a ball $B=B(p,5\Gamma s)$.
\item there are bounds for the second fundamental form of $\partial K^\sharp$ and its derivatives:
$$\sup_{\partial K^\sharp\cap B}|\nabla^\ell A| \leq C_\ell s^{-1-\ell}, \ \ \forall\ \ell\geq 0.$$ 
\item If $B\subset U$, then for every point $p_\sharp \in \partial K^\sharp\cap B$, with $\lambda_1(p_\sharp)<0$, there is a point $p_-\in \partial K^- \cap B$ with $\frac{\lambda_1}{H}(p_-)\leq \frac{\lambda_1}{H}(p_\sharp)+\delta'(s)$.
\item If $B(p,10\Gamma s)\subset U$, then $s^{-1}\cdot \exp_p^{-1} (K^\sharp)$ is $\delta'(\delta)$-close in $B(0,10\Gamma)\subset \mathbb{R}^3$ to a pair of disjoint standard caps, which are at distance $\Gamma$ from the origin.
\end{enumerate} Where in (3) and (4), the function $\delta'$ is positive and $\lim_{x\to 0}\delta'(x)=0$. Whenever we are in the situation above, we refer to $K^-$ and $K^\sharp$ as the \textit{pre-surgery} and \textit{post-surgery} domains, respectively. We also say that the pair of standard caps are \textit{facing surgery caps}.   
\end{definition}

To define the mean curvature flow with surgery we must first introduce the notions of \textit{$\alpha$-Andrews flow} and \textit{$(\alpha,\delta)$-flow}.

\begin{definition}\label{alphaandrews}(see Definition 7.1, \cite{HaslhoferKetover})
Let $\alpha>0$. \textit{A smooth $\alpha$-Andrews flow} $\{K_t \subset U\}_{t\in I}$ in an open set $U\subset M$ of a closed Riemannian $3$-manifold $M$, is a smooth family of mean convex domains moving by mean curvature flow with $\inf H \geq 4\alpha/\operatorname{inj}(M)$, such that for every $p\in \partial K_t$, the two closed balls $\bar B^{\pm}(p)$ with radius $r(p)=\alpha/H(p)$ and center $c^\pm(p)=\exp_p(\pm r(p\nu(p))$, where $\nu$ is the inward unit normal of $\partial_t K$ at p, satisfy $\bar B^+(p)\cap U\subset K_t$ and $\bar B^+(p)\cap U\subset U\setminus \operatorname{Int}(K_t)$, respectively.
\end{definition}

\begin{definition}\label{alphadelta}
An $(\alpha,\delta)$-flow $\mathcal{K}$ is a collection of finitely many smooth $\alpha$-Andrews flow $\{K_t^i\subset U\}_{t\in[t_{i-1},t_i]}$ $(i=1,\dots,k-1)$ in an open set $U\subset M$ such that the following statements hold:
\begin{enumerate}
\item For each $i=1,\dots,k-1$, a finite collection of disjoint strong $\delta$-necks (Definition \Cref{strongneck}) of the final time slice $K_{t_i}^i = : K_{t_i}^-$ are replaced by pairs of standard caps (as described in Definition \Cref{replacing}). The resulting post-surgery domain is denoted by $K_{t_i}^\sharp \subset K_{t_i}^-$.
\item For $i\geq 1$, the initial time slice $K_{t_i}^{{i+1}}=:K^+_{t_i}$, is obtained from $K_{t_i}^\sharp$ by discarding some connected components.
\item There exists $s_\sharp=s_\sharp(\mathcal{K})>0$, which depends on $\mathcal{K}$, such that all the necks in item (1) have radius $s\in[\mu^{-1/2}s_\sharp,\mu^{1/2}s_\sharp]$ (here $\mu\in[1,\infty)$, see Convention 1.2 \cite{HaslhoferKleiner1}).
\end{enumerate}
\end{definition}

Finally, we present the definition of the mean-convex mean curvature flow with surgery:

\begin{definition}
An \textit{$(\alpha,\delta,\mathbb{H})$-flow}, $\mathbb{H}=(H_{th},H_{neck},H_{trig})$, is an $(\alpha,\delta)$-flow $\{K_t\subset U\}_{t\geq 0}$ such that the following statements hold.
\begin{enumerate}
\item $4\alpha/\operatorname{inj}(M)\leq H\leq H_{trig}$ everywhere, and surgery and/or discarding occurs precisely at times $t$ when $H=H_{trig}$ somewhere.
\item The collection of necks in the definition of $(\alpha,\delta)$-flow is a minimal collection of solid $\delta$-necks of curvature $H_{neck}$ which separate the set $\{ H=H_{trig}\}$ from $\{ H\leq H_{th}\}$ in the domain $K_t^{-}$.
\item $K_t^+$ is obtained from $K^\sharp_t$ by discarding precisely those connected components with $H>H_{th}$ everywhere. In particular, of each pair of facing surgery caps, precisely one is discarded. 
\item If a strong $\delta$-neck from item (2) also is a strong $\hat \delta$-neck for some $\hat\delta<\delta$, then item (4) of the surgery (i.e. replacing a strong $\delta$-neck with a pair of standard caps), also holds with $\hat \delta$ instead of $\delta$. 
\end{enumerate}
\end{definition}

For any smooth mean-convex domain $K_0$ which is mean-convex, there is a choice of parameters $\alpha$ so that the mean curvature flow for short time is an $(\alpha,\delta,\mathbb{H})$-flow. Curvature and convexity estimates, canonical neighborhood theorems, as well as the long time existence theory for $(\alpha,\delta,\mathbb{H})$-flows, are developed for $\mathbb{R}^3$ in \cite{HaslhoferKleiner2} and for general $3$-manifolds in \cite{HaslhoferKetover} (see also \cite{LiokumovichMaximo}). 

We recall the following existence result:

\begin{theorem}\label{mcfexistence} \cite[Theorem 7.7]{HaslhoferKetover}.
Let $K\subset M^3$ be a mean convex domain. Then for every $0<T<\infty$, choosing $\delta>0$ small enough and $H_{trig}\gg H_{neck}\gg H_{th}\gg 1$, there exists a $(\alpha,\delta,\mathbb{H})$-flow $\{K_t\}_{t\in[0,T]}$, with initial condition $K_0=K$.
\end{theorem}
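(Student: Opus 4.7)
The plan is to construct the $(\alpha,\delta,\mathbb{H})$-flow by induction in time, alternating smooth $\alpha$-Andrews evolution with discrete surgeries performed whenever the mean curvature reaches the trigger level $H_{trig}$. Start with the classical mean curvature flow of $K_0$: since $K_0$ is smooth, compact, and mean-convex, one can read off an $\alpha>0$ from the extrinsic geometry of $\partial K_0$, and the Riemannian version of Andrews' non-collapsing estimate guarantees that the $\alpha$-Andrews condition of \Cref{alphaandrews} is preserved along the smooth flow (with a controlled loss in $\alpha$ absorbable by strengthening the global constants). This produces a smooth $\alpha$-Andrews flow on $[0,t_1)$, where $t_1$ is either $T$ (and we are done) or the first time at which $\max_{\partial K_{t_1}} H = H_{trig}$.

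At such a time $t_1$ I invoke the \emph{canonical neighborhood theorem} for $\alpha$-Andrews flows: after fixing $\delta$ and taking $H_{trig}/H_{neck}$ and $H_{neck}/H_{th}$ sufficiently large (depending on $\alpha$, $\operatorname{inj}(M)$, and $\|\operatorname{Rm}_M\|$), every point of $\partial K_{t_1}$ with sufficiently large mean curvature has a parabolic neighborhood which, after parabolic rescaling by $H(p)$, is $\delta$-close in $C^{[1/\delta]}$ to either a strong $\delta$-neck, a standard capped region, or a piece of a closed convex ancient $\alpha$-Andrews solution. This is proved by contradiction via a pointwise blow-up argument combined with compactness and classification of ancient $\alpha$-Andrews solutions. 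Granting it, one finds on $\partial K_{t_1}$ a minimal disjoint collection of strong $\delta$-necks of curvature in $[\mu^{-1/2}H_{neck},\mu^{1/2}H_{neck}]$ separating $\{H=H_{trig}\}$ from $\{H\leq H_{th}\}$. Performing the replacement of \Cref{replacing} on each such neck, then discarding every component on which $H>H_{th}$ everywhere (which includes exactly one of each pair of facing caps), yields $K^+_{t_1}$ with $\max H\leq H_{th}<H_{trig}$, from which the smooth $\alpha$-Andrews flow is restarted.

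To reach time $T$, one needs that only finitely many surgeries occur on $[0,T]$. Each surgery either discards a component which, being $\alpha$-Andrews with $H\geq H_{th}$, contains a ball of radius $\alpha/H_{th}$ and hence has volume at least $v_0=v_0(\alpha,H_{th})>0$, or excises a region between two facing caps which likewise has volume bounded below in terms of $H_{neck}$. Since $\operatorname{Vol}(K_t)$ is nonincreasing along the smooth portions and drops by at least $v_0$ at every surgery, the total number of surgeries is at most $\operatorname{Vol}(K_0)/v_0$, and concatenation yields the desired $(\alpha,\delta,\mathbb{H})$-flow on $[0,T]$. The main obstacle is unquestionably the canonical neighborhood theorem: it rests on global pointwise curvature and convexity estimates for $\alpha$-Andrews flows that must survive surgery, which in turn requires showing that the replacement of \Cref{replacing} degrades the non-collapsing and convexity constants only by amounts reabsorbable once $H_{trig}$ is taken large. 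Adapting the Euclidean arguments of \cite{HaslhoferKleiner2} to a Riemannian ambient introduces error terms of order $\operatorname{inj}(M)^{-1}$ and $\|\operatorname{Rm}_M\|^{1/2}$, which is precisely why the definitions are phrased relative to $\operatorname{inj}(M)$; handling these errors uniformly is the substantive content of \cite{HaslhoferKetover}.
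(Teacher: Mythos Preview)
The paper does not prove this statement at all: \Cref{mcfexistence} is quoted verbatim as \cite[Theorem 7.7]{HaslhoferKetover} and used as a black box, so there is no ``paper's own proof'' to compare your proposal against.

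That said, your sketch is a faithful outline of the argument in \cite{HaslhoferKleiner2,HaslhoferKetover}: smooth $\alpha$-Andrews evolution until $H$ hits $H_{trig}$, invoke the canonical neighborhood theorem to locate separating $\delta$-necks at scale $H_{neck}$, replace by caps and discard high-curvature components, then iterate; finiteness of surgeries on $[0,T]$ comes from a definite volume drop at each surgery. You also correctly identify the substantive difficulty, namely that the curvature, convexity, and non-collapsing estimates must be shown to survive surgery with controlled deterioration, and that the Riemannian adaptation introduces lower-order terms governed by $\operatorname{inj}(M)$ and $\|\operatorname{Rm}_M\|$. One small inaccuracy: the volume-drop bound for discarded components is not simply ``contains a ball of radius $\alpha/H_{th}$''; rather, discarded components are classified via the canonical neighborhood theorem (cf.\ \Cref{corollarycanonical}) as diffeomorphic to $\overline{D}^3$ or $\overline{D}^2\times S^1$ and covered by $\varepsilon$-canonical neighborhoods, from which the uniform lower volume bound is read off. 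But as a high-level summary your proposal is accurate; it is just that the present paper takes all of this for granted.
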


The following Canonical Neighborhood Theorem implies a classification of discarded components:

\begin{theorem}\label{canonical}\cite[Theorem 7.6]{HaslhoferKetover}
For every $\e>0$ there exists $H_{can} <\infty$ such that if $\mathcal{K}$ is an $(\alpha,\delta,\mathbb{H})$-flow with $\delta$ small enough and $H_{trig}\gg  H_{neck}\gg H_{th}\gg1$, then every $(p,t)\in \partial \mathcal{K}$ with $H(p,t)\geq H_{can}$ is $\e$-close to either (a) an ancient $\alpha$-Andrews flow in $\R^3$ or (b) the evolution of a standard cap preceded by the evolution of a round cylinder $\overline{D}^2\times \R\subset \R^3$.
\end{theorem}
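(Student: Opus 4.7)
The plan is to argue by contradiction using parabolic rescaling, following the blow-up strategy of Hamilton--Perelman as adapted to mean-convex flows with surgery by White and Haslhofer--Kleiner. First I would suppose the conclusion fails: fix $\e_0>0$ and extract parameters $\delta_j\to 0$, $H_{th,j}\to\infty$, $H_{neck,j}/H_{th,j}\to\infty$, $H_{trig,j}/H_{neck,j}\to\infty$, $(\alpha,\delta_j,\mathbb{H}_j)$-flows $\mathcal{K}_j$, and points $(p_j,t_j)\in\partial\mathcal{K}_j$ with $\lambda_j:=H(p_j,t_j)\to\infty$ such that no parabolic neighborhood of $(p_j,t_j)$ is $\e_0$-close to a flow of type (a) or (b).

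Next I would parabolically rescale around $(p_j,t_j)$ by $\lambda_j$: pull back through $\exp_{p_j}$, then dilate space by $\lambda_j$ and time by $\lambda_j^2$. Since $\lambda_j\to\infty$ while $\operatorname{inj}(M)$ is fixed, the ambient metric smoothly converges to Euclidean $\R^3$. The rescaled flows $\tilde{\mathcal{K}}_j$ satisfy $H=1$ at the spacetime origin; they remain $\alpha$-Andrews flows because the noncollapsing condition in \Cref{alphaandrews} is scale-invariant; and the rescaled thresholds $\tilde H_{th,j},\tilde H_{neck,j},\tilde H_{trig,j}$ are well-separated with $\tilde H_{trig,j}\geq 1$.

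The argument then splits into two cases according to whether surgeries occur near the origin in the rescaled picture. In the smooth case, where for every $R<\infty$ the parabolic ball of rescaled radius $R$ about the origin is surgery-free for all large $j$, I would invoke the curvature and convexity estimates for $\alpha$-Andrews flows (proved in \cite{HaslhoferKleiner2} and transferred to the Riemannian setting in \cite{HaslhoferKetover}) to obtain uniform $C^k$ bounds; together with noncollapsing, this yields a subsequential smooth limit $\tilde{\mathcal{K}}_\infty$ on $(-\infty,0]$ that is an ancient $\alpha$-Andrews flow in $\R^3$. For large $j$ this forces $(p_j,t_j)$ to be $\e_0$-close to (a), a contradiction. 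In the surgery case, there exist, after passing to a subsequence, a surgery time $\tau_j\leq t_j$ and a surgery point $q_j$ within bounded rescaled distance of $p_j$. The separation $H_{trig,j}\gg H_{neck,j}\gg H_{th,j}$, combined with $\alpha$-noncollapsing and convexity, forces the neck radius to satisfy $s_j\sim\lambda_j^{-1}$. \Cref{strongneck} then guarantees that the rescaled pre-surgery flow is $\delta_j$-close to a shrinking round cylinder on a ball of rescaled radius $2/\delta_j\to\infty$, and \Cref{replacing} guarantees that the rescaled post-surgery domain is $\delta'(\delta_j)$-close to a pair of standard caps on increasingly large balls. Continuation of the $\alpha$-Andrews flow after surgery produces a subsequential limit equal to the evolution of a standard cap in $\R^3$ preceded by the evolution of a round cylinder, forcing $\e_0$-closeness to (b), again a contradiction.

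The hardest part will be the bootstrapping required to make noncollapsing, curvature, and convexity estimates uniform in time and persistent across surgeries; this is the principal technical advance of \cite{HaslhoferKleiner2} and its extension to Riemannian $3$-manifolds in \cite{HaslhoferKetover}, and is precisely what ensures that the compactness step in the smooth case produces a well-defined ancient $\alpha$-Andrews limit. A secondary obstacle is ruling out that surgeries at a wrong (rescaled) scale influence the blow-up, so that the two cases above are genuinely exhaustive: this requires exploiting the strict separation of $(H_{trig},H_{neck},H_{th})$ together with the fact that surgery necks always separate the trigger part from the thick part, in the sense prescribed by the definition of an $(\alpha,\delta,\mathbb{H})$-flow.
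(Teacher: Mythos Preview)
The paper does not supply its own proof of this statement: \Cref{canonical} is quoted verbatim from \cite[Theorem 7.6]{HaslhoferKetover} and used as a black box, so there is no argument in the paper to compare against. Your sketch is a faithful outline of the contradiction-and-rescaling strategy that Haslhofer--Kleiner and Haslhofer--Ketover actually use to prove such canonical neighborhood theorems, and as a proof sketch it is essentially correct; but since the present paper treats the result as an external citation, no proof is expected here.
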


The following is a very useful consequence of the previous theorem:

\begin{corollary}\label{corollarycanonical}\cite[Corollary 1.25]{HaslhoferKleiner2} \cite[Corollary 3.4]{LiokumovichMaximo}
For $\e>0$ small enough, any $(\alpha,\delta,\mathbb{H})$-flow satisfying the hypothesis of \Cref{canonical} has discarded components diffeomorphic to the $3$-ball or solid torus $\overline{D}^2\times S^1$.
\end{corollary}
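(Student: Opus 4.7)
My plan is to combine the Canonical Neighborhood Theorem (\Cref{canonical}) with the classification of ancient $\alpha$-Andrews flows in $\R^3$ in order to control the topology of each discarded component. I first fix the surgery parameters so that $H_{th}\geq H_{can}(\e)$ for some sufficiently small $\e>0$. A component $C$ of $K^\sharp_t$ is discarded precisely when $H>H_{th}$ everywhere on $C$, so in particular every point of $\partial C$ admits a canonical neighborhood of one of the two types listed in \Cref{canonical}.

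Next I invoke the classification of ancient noncollapsed (i.e.\ $\alpha$-Andrews) flows in $\R^3$: in the noncompact case these reduce to the shrinking round cylinder $\bar D^2\times \R$ and the translating bowl soliton, while in the compact case they reduce to shrinking round spheres and the rotationally symmetric ancient ovals. Together with the explicit evolution of a standard cap, this shows that every point $p\in\partial C$ lies in an open region of $\partial C$ that is $\e$-close either to a piece of a round cylinder (a \emph{neck region}, diffeomorphic to $S^1\times I$) or to a strictly convex piece with a single boundary circle (a \emph{cap region}, diffeomorphic to $D^2$). Covering $\partial C$ by such pieces and passing to a minimal refinement, the dual graph has vertices of degree one (caps) or two (necks). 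Since $\partial C$ is a closed surface, this graph is either an arc of necks capped at both ends, forcing $\partial C\cong S^2$, or a cycle of necks with no caps, forcing $\partial C\cong T^2$. The same decomposition extends into $C$: each cap region bounds a ball-like region in $C$ modeled on a convex $\R^3$ cap, and each neck region bounds a solid tube $\bar D^2\times I$. Gluing these pieces along disks yields $C\cong D^3$ in the spherical case and $C\cong \bar D^2\times S^1$ in the toroidal case.

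The main obstacle is the topological step: ruling out ``branched'' pieces in the dual graph, such as pair-of-pants-shaped regions that would allow more complicated handlebodies. This is excluded by the classification above, since none of the ancient $\alpha$-Andrews models, nor the standard cap model, contains a region locally modeled on a pair of pants. Mean-convexity further guarantees that every strong $\delta$-neck admits a transverse, locally separating disk inside $C$, so the neck-cap decomposition of $\partial C$ genuinely extends to a compatible handlebody decomposition of $C$ without producing any additional topology. This yields exactly the two possibilities claimed.
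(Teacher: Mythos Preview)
The paper does not give its own proof of this corollary; it simply cites \cite[Corollary 1.25]{HaslhoferKleiner2} and \cite[Corollary 3.4]{LiokumovichMaximo}. Your argument follows essentially the same neck/cap decomposition used in those references and is correct in outline.

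One comment on the comparison: you invoke the full classification of ancient $\alpha$-Andrews flows in $\R^3$ (round sphere, round cylinder, bowl soliton, ancient oval), which is the Brendle--Choi result and postdates \cite{HaslhoferKleiner2}. The original Haslhofer--Kleiner argument does not need this; it relies only on the convexity and cylindrical estimates built into the $(\alpha,\delta,\mathbb{H})$-flow framework. Their dichotomy is: either some point of $\partial C$ has $\lambda_1/H$ bounded away from zero, in which case a continuity argument (using the local curvature estimate) propagates strict convexity to all of $\partial C$ and forces $C$ to be a convex ball; or every point of $\partial C$ is $\e$-close to a round cylinder, in which case the neck decomposition yields $C\cong D^3$ or $C\cong \bar D^2\times S^1$ directly. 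Your route via the full classification is perfectly valid and perhaps conceptually cleaner, but it imports a much deeper theorem than is needed. Also, your ``dual graph'' step, while correct in spirit, is somewhat informal; to make it rigorous one typically fixes a finite cover by $\e$-canonical neighborhoods, uses the uniform curvature scale to ensure adjacent neck charts overlap on genuine product annuli, and then reads off the topology from the resulting chain of solid tubes and caps.
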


It is possible to approximate a level set flow by a sequence of flows with surgery (see \cite{Lauer, Head}). We recall the following result from \cite{Lauer} which we only state in dimension 2 (see also \cite[Proposition 1.27]{HaslhoferKleiner2}):

\begin{proposition}\cite[Claim in Proof of Theorem A]{Lauer}\label{theoremLauer} Let $K$ be a 3-dimensional compact mean-convex region. For each $T>0$, there exists a sequence of mean curvature flows with surgery $\{K_t^j,0\leq t \leq T\}$, with $K_0^j=K$ and a sequence $\e_j\to 0$, such that $F_{t+\e_j}(K)\subset K_t^j\subset F_t(K)$, for all $t \in 0\leq T$. At a surgery time this means $F_{t+\e_j}(K)\subset K_t^{j,+} \subset K_t^{j,\sharp}  \subset K_t^{j,-} \subset F_t(K)$. 
\end{proposition}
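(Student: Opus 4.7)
The plan is to construct the approximating surgery flows by taking the surgery parameters progressively finer, then verify the sandwich inclusions separately: the outer inclusion $K_t^j\subset F_t(K)$ follows by induction on surgery times using the inclusion principle, while the inner inclusion $F_{t+\e_j}(K)\subset K_t^j$ is the heart of the argument and requires a quantitative control of how much surgery and discarding can remove from the level set flow.

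For the setup, fix $T>0$ and invoke \Cref{mcfexistence} with sequences $\delta_j\searrow 0$ and $H_{th}^j\ll H_{neck}^j\ll H_{trig}^j\to\infty$ satisfying the hypotheses of that theorem. This yields an $(\alpha,\delta_j,\mathbb{H}^j)$-flow $\{K_t^j\}_{t\in[0,T]}$ starting at $K_0^j=K$. The claim is that with a suitable choice $\e_j\to 0$, these flows satisfy the stated sandwich inclusions.

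For the outer inclusion, I argue by induction on the surgery times $0=t_0^j<t_1^j<\cdots<t_{N_j}^j\leq T$. On each interval $(t_i^j,t_{i+1}^j)$, the flow $t\mapsto K_t^j$ is a smooth classical mean curvature flow of a mean-convex domain, which by item $(2)$ of \Cref{properties} coincides with the level set flow of its initial slice $K_{t_i^j}^{j,+}$. Combined with the inductive hypothesis $K_{t_i^j}^{j,+}\subset F_{t_i^j}(K)$ and the inclusion principle (item $(5)$ of \Cref{properties}), this propagates to $K_t^j\subset F_t(K)$ for all $t\in(t_i^j,t_{i+1}^j]$. At the surgery time $t_{i+1}^j$, \Cref{replacing}(1) guarantees $K_{t_{i+1}^j}^{j,\sharp}\subset K_{t_{i+1}^j}^{j,-}$, and discarding only further shrinks the domain, so that $K_{t_{i+1}^j}^{j,+}\subset K_{t_{i+1}^j}^{j,\sharp}\subset K_{t_{i+1}^j}^{j,-}\subset F_{t_{i+1}^j}(K)$, closing the induction.

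For the inner inclusion, the idea is that the total ``loss'' $F_t(K)\setminus K_t^j$ comes from two sources, both of which shrink as the parameters become finer: the modification region at each surgery lies in a ball of radius $O(1/H_{neck}^j)$, and by the Canonical Neighborhood Theorem \ref{canonical} together with \Cref{corollarycanonical}, each discarded component is either a small $3$-ball or solid torus with curvature at least $H_{th}^j$, whose own level set evolution becomes extinct within time $O\bigl(1/(H_{th}^j)^2\bigr)$. Using the finite speed property (item $(7)$ of \Cref{properties}) and the set-theoretic subsolution characterization of $F_t$, one compares $K_t^j$ to the level set flow $F_{t+\e_j}(K)$ after a small time delay $\e_j$ tailored to absorb these local modifications. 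The main obstacle is precisely this quantitative accounting: one needs to show that the delay $\e_j$ is \emph{uniform} across the (possibly many) surgeries up to time $T$, rather than compounding additively. The way to handle this is not to track delays surgery-by-surgery but to use the interior inclusion principle together with the fact that after the delay $\e_j$, the level set flow of $K$ is strictly interior to every neck and every component that will later be modified or discarded; combined with a compactness argument this gives a single $\e_j\to 0$ that works for all $t\in[0,T]$, which is the key observation in Lauer's argument.
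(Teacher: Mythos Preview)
The paper does not prove this proposition: it is quoted verbatim as a claim from Lauer's paper (with a parallel reference to \cite[Proposition 1.27]{HaslhoferKleiner2}) and used as a black box. There is therefore no ``paper's own proof'' to compare against.

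As for your sketch on its own merits: the outer inclusion $K_t^j\subset F_t(K)$ is fine and is exactly the standard argument---each smooth piece of the surgery flow is a set-theoretic subsolution and surgery/discarding only shrinks the domain, so maximality of the level set flow gives the containment. Your treatment of the inner inclusion, however, remains a heuristic. You correctly identify the two sources of discrepancy (neck replacements in balls of radius $O(1/H_{neck}^j)$ and discarded high-curvature components), but the final paragraph does not actually carry out the comparison. In particular, the sentence ``after the delay $\e_j$, the level set flow of $K$ is strictly interior to every neck and every component that will later be modified or discarded'' is not justified and is close to assuming what you want to prove: you need an argument that the level set flow, shifted by $\e_j$, has already vacated the neck and discarded regions \emph{before} the surgery flow does, and this requires a genuine barrier or Hausdorff-convergence argument (as in Lauer or Haslhofer--Kleiner), not just the finite-speed property. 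The compactness remark at the end is too vague to close the gap. If you intend to supply a proof rather than cite one, the inner inclusion needs substantially more detail.
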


We now show that in a compact $3$-manifold, if the surgery parameters are fine enough, it is possible to continue a mean-curvature flow with surgery smoothly after finitely many surgeries so that it converges smoothly to a minimal surface. A similar result was proved in \cite[Theorem 1.2]{BrendleHuisken3m}.

\begin{theorem}\label{theoremContinuation}
Let $M$ be a $3$-manifold and $K\subset M$ a smooth compact region with mean-convex boundary. Assume that $$K_\infty=\cap_{t\geq 0}F_t(K)\neq \emptyset.$$ Then, there exists $T>0$ and a mean curvature flow with surgery $\{K_t,0\leq t\leq T\}$ such that $K_0=K$, $K_T$ is a regular time of the flow and $F_t(K_T)$ converges to $\Gamma=\partial K_\infty$ as $t\to\infty$, with the same multiplicity as $\lim_{t\to\infty}F_t(K)$.
\end{theorem}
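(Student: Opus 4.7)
The plan is to approximate the level set flow by a mean-convex mean curvature flow with surgery whose parameters are chosen so fine that after a uniformly bounded window in time all surgeries cease, leaving a smooth mean-convex domain which then evolves classically to $\Gamma$ with the correct multiplicity. For the setup, \Cref{theoremWhite}(4)--(6) provides a time $T_0 > 0$ after which $\partial F_t(K)$ is smooth and converges in $C^\infty$ to $\Gamma = \partial K_\infty$, with multiplicity one above components of $K_\infty$ of nonempty interior and multiplicity two above components of empty interior. Fix a tubular neighborhood $N$ of $\Gamma$ and a time $T > T_0$ so that for every $t \in [T_0, T+2]$ the surface $\partial F_t(K)$ lies in $N$, is graphical over $\Gamma$ with the correct sheet count, satisfies a uniform bound $\sup |H| \leq \eta$ with $\eta$ as small as we wish (by taking $T_0$ large), and has uniform bounds on higher derivatives of $A$.

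Invoke \Cref{theoremLauer} on $[0, T+2]$ (with underlying existence supplied by \Cref{mcfexistence}) to obtain a sequence of mean curvature flows with surgery $\{K^j_t\}_{0 \leq t \leq T+2}$ starting at $K$, together with $\e_j \searrow 0$, satisfying
\[
F_{t+\e_j}(K) \subset K_t^{j,+} \subset K_t^{j,\sharp} \subset K_t^{j,-} \subset F_t(K), \quad t \in [0,T+2].
\]
The main claim is that for $j$ sufficiently large no surgery of $K^j$ occurs on $[T_0, T+1]$. Let $H_{can}$ be the canonical-neighborhood constant from \Cref{canonical}. If a surgery occurred at some $t^* \in [T_0, T+1]$, then the maximum of $H$ on $\partial K_{t^*}^{j,-}$ would exceed $H_{trig}^j \gg H_{can}$, so by continuity there would exist $p \in \partial K_{t^*}^{j,-}$ with $H(p) = H_{can}$. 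By \Cref{canonical}, a neighborhood of $p$ in $\partial K_{t^*}^{j,-}$ coincides, up to a small perturbation, with a piece of a round cylinder or standard cap at the fixed scale $1/H_{can}$, and in particular has transverse extent bounded below by a constant $c = c(H_{can}) > 0$. But the sandwich inclusion confines $\partial K_{t^*}^{j,-}$ to the slab between the two smooth graphical surfaces $\partial F_{t^*}(K)$ and $\partial F_{t^* + \e_j}(K)$, whose Hausdorff distance is at most $\eta \e_j$ by integrating the uniform bound $|H| \leq \eta$ along the flow. Choosing $\eta$ sufficiently small once and taking $j$ so large that $\eta \e_j \ll c$, the slab is thinner than the transverse extent of the cap or cylinder, a contradiction.

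Fix such a $j$ and set $K_T := K_T^j$. Since $T$ is not a surgery time, $K_T$ is a smooth mean-convex domain $C^k$-close to the graphical configuration of $\partial F_T(K)$ over $\Gamma$. By the inclusion principle in \Cref{properties},
\[
F_{t+\e_j}(K) \subset F_t(K_T) \subset F_{t+T}(K), \quad t \geq 0,
\]
so $\cap_{t \geq 0} F_t(K_T) = K_\infty$; applying \Cref{theoremWhite}(6) to the smooth mean-convex region $K_T$ then yields smooth convergence of $F_t(K_T)$ to $\Gamma$, with the same multiplicity as $F_t(K)$ (multiplicity being determined componentwise by whether $K_\infty$ has nonempty interior). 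The main obstacle lies in the middle step---converting the Hausdorff sandwich supplied by \Cref{theoremLauer} into curvature control strong enough to forbid surgery necks---which \Cref{canonical} resolves by providing a rigid geometric model of high-curvature regions whose spatial scale cannot be compressed into a slab of vanishing thickness.
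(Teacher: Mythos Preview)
Your approach differs from the paper's. The paper never tries to rule out surgeries directly; instead it takes a regular slice $G^j = K^j_{t_0}$ with $t_0 > T$, notes that $\partial G^j$ is outward minimizing in $F_{t_0}(K)$ (inherited from the surgery flow), and then studies the \emph{level set flow} of $G^j$. The sandwich $F_{t+\e_j}\subset G^j_t\subset F_t$ together with outward minimizing gives multiplicity-one measure convergence $\mathcal{H}^2\llcorner\partial G^j_t \to \mathcal{H}^2\llcorner\partial F_t$, and White's local regularity theorem then forces the first singular time $t_j$ of $\partial G^j_t$ to diverge. The key input is an area bound, not a curvature bound.

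Your argument has a genuine gap at the ``by continuity'' step. To invoke the intermediate value theorem and locate $p\in\partial K^{j,-}_{t^*}$ with $H(p)=H_{can}$, you need a point on the same connected component with $H<H_{can}$. The Hausdorff sandwich $F_{t+\e_j}(K)\subset K^j_t\subset F_t(K)$ from \Cref{theoremLauer} gives no pointwise control on the mean curvature of $\partial K^j_t$. After earlier surgeries in $[0,T_0]$ the freshly attached caps carry curvature of order $H^j_{neck}\gg H_{can}$, and nothing forces $H$ to drop below $H_{can}$ before the next trigger time; a component could sit entirely in the regime $H\geq H_{can}$. In that case every canonical neighborhood occurs at scale $\leq 1/H(p)$, which can be arbitrarily small, and your slab (of thickness $\eta\e_j$, compared only against the \emph{fixed} scale $c(H_{can})$) gives no contradiction. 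The geometric picture you describe---a cylinder or cap of definite size poking through a thin slab---only materializes at a point where $H$ is of order $H_{can}$, and you have not produced one.

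A secondary issue: even if you succeed in showing no surgery occurs on $[T_0,T+1]$, you only obtain that $K_T$ is smooth. Invoking \Cref{theoremWhite}(6) then gives convergence of $F_t(K_T)$ to $\Gamma$, but not that $F_t(K_T)$ is a classical flow for all $t\geq 0$---and that stronger conclusion is precisely what the paper extracts from White's local regularity and uses downstream (see the proof of \Cref{theoremIncompressible}). Also, your sandwich in the last paragraph should read $F_{t+T+\e_j}(K)\subset F_t(K_T)\subset F_{t+T}(K)$.
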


\begin{proof}
Let $F_t:=F_t(K)$. From \Cref{theoremWhite} there exists some time $T>0$ after which $F_t$ is a classical smooth mean curvature flow and $F_t\to \Gamma$ smoothly, as $t\to\infty$. Fix $t_0\in(T,+\infty)$ and for each $j\in\N$, assume there exist a closed set $G^j$ with smooth and strictly mean-convex boundary $\partial G^j$, and $\e_j>0$, such that: \begin{enumerate}
\item $\e_j\to 0$, as $j\to \infty$,
\item $F_{t_0+\e_j}\subset G^j \subset F_{t_0}$ and
\item $\partial G^j$ is outward minimizing in $F_{t_0}$.
\end{enumerate}

First we show: 
\begin{claim*}
Let $\{G^j_t, t\geq t_0\}$ be the level set flow starting at $G^j_{t_0}:=G^j$. Then, for $j$ large enough, $\{\partial G^j_t, t\geq t_0\}$ is a classical mean curvature flow converging smoothly to $\Gamma$.
\end{claim*}

Since $G_{t_0}^j$ is strictly mean-convex, the boundary of the level set flow $\partial G^j_t$ is:
\begin{enumerate}
\item [(a)] a classical mean curvature flow on $[t_0,t_j)$, for some $t_j>t_0$.
\item [(b)] smooth, for a.e. $t>t_0$.
\item [(c)] outward minimizing inside of $G^j_{t_0}=G^j$, for all $t>t_0$.
\end{enumerate}
Combining (c) and (3) we can add:
\begin{enumerate}
\item [(d)] outward minimizing inside of $F_{t_0}$, for all $t>t_0$.
\end{enumerate}
By (2) and the inclusion principle, we have $F_{t+\e_j}\subset G_t^j\subset F_t$, for all $t\geq t_0$. For a.e. $t>t_0$, (b) holds for all $j\in\N$.  Then, at those times $\partial G^j_t$ is an outward minimizing family of smooth surfaces trapped in between the graphical convergence $\partial F_{t+\e_j}\to \partial F_{t}$. The standard argument using (d) to compare the area of $\partial G^j_t$ to that of the graphs (notice that $\partial F_{t+\e_j}$ is also outward minimizing), implies that the measures $\mathcal{H}^2\llcorner \partial G^j_t$ converge with multiplicity one to $\mathcal{H}^2\llcorner \partial F_t$ (see also \cite{Head,BrendleHuisken3m}). By continuity, the convergence extends to all $t\geq t_0$ and is uniform in compact sets of $[t_0,+\infty)$. If $\liminf_j t_j<+\infty$, then (\cite{WhiteLocal}, Theorem 3.5) implies $t_j$ is a regular time for some $j$ large, which is a contradiction. Therefore, we must have $\lim_j t_j=+\infty$. To conclude the claim, we want to show $t_j=\infty$, for $j$ large enough. In this case, we can apply the same argument as above to the smooth flow $\partial G_t^j$ on $[t_j-1,t_j)$, using the inclusion  $F_\infty \subset G^j_t \subset F_t$ and the fact that $\partial F_t\to \widetilde{\Gamma}_{\infty}$ smoothly as $t\to\infty$. This finishes the proof of the claim.

To prove the theorem, notice that from \Cref{theoremLauer} there exists a sequence of mean curvature flows with surgery $\{K_t^j,0\leq t \leq T+1\}$, and a sequence $\e_j\to 0$, such that $F_{t+\e_j}\subset K_t^j\subset F_t$, for all $t \in 0\leq T+1$. Choose $t_0\in(T,T+1)$ so that it is a regular time of $K^j$, for all $j\in\N$. Then $G^j=K_{t_0}^j$ satisfies the hypothesis of the claim. In particular, when $j$ is large enough, the mean curvature flow with surgery $K_t^j$ can be continued as a smooth flow on $t\geq t_0$ that converges to ${\Gamma}$ as $t\to\infty$. Finally, $F_{T+\e_j}\subset K_T^j\subset F_T$ implies $$F_{t+\e_j}\subset F_t(K^j_T)\subset F_t.$$ Therefore, the multiplicity of the convergence towards $\Gamma$ is the same as $F_t$. 
\end{proof}

Another consequence of \Cref{theoremLauer}, in combination with \Cref{diffeoclose}, is that the boundary of a flow with surgery on $\M$ projects well to a non-vanishing mean curvature flow with surgery on $M$.

\begin{theorem}\label{surgeryprojects}
Let $S$ a be non-separating surface with non-vanishing mean curvature vector on a closed $3$-manifold $M$. For each $T>0$, there exists $\{K_t,0\leq t\leq T\}$ a mean-convex flow with surgery starting at $K_0=\widetilde{S}^+$ such that $\pi$ is a diffeomorphism on neighborhoods of $\overline{K^-_{t_i}\setminus K^+_{t_i}}$ and $\partial K_t$, for $t_i$ and $t$ surgery and regular times, respectively. In particular, $\pi(\partial K^-_{t_i})$, $\pi(\partial K^\sharp_{t_i})$, $\pi(\partial K^+_{t_i})$ and $\pi(\partial K_t)$, define a flow with surgery on $M$.
\end{theorem}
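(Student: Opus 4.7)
The plan is to combine \Cref{diffeoclose} with Lauer's approximation result (\Cref{theoremLauer}), extended to our non-compact setting via \Cref{closedcompact}. The underlying principle is that if the surgery parameters are fine enough, all of the relevant surgery boundaries will be trapped inside an arbitrarily narrow space-time strip around the boundary of the level set flow, and hence lie in the region where $\pi$ is a diffeomorphism. Since $\pi$ is a local isometry, every local ingredient of the $(\alpha,\delta,\mathbb{H})$-structure (mean curvature, strong $\delta$-necks, standard caps, the Andrews noncollapsing condition) then descends unchanged to $M$.

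Concretely, given $T>0$, \Cref{diffeoclose} supplies an $\e>0$ such that $\pi$ is a diffeomorphism on a neighborhood of $\cup_{s\in[t,t+\e]}F_s(\widetilde S)$ for every $0\leq t\leq T$. Next, since $\widetilde S^+$ is sandwiched between translates of a lift of an area-minimizing competitor, \Cref{closedcompact} lets us apply \Cref{theoremLauer} essentially verbatim (after restricting to the compact mean-convex region $\widetilde S^+\cap X_k$, with the fixed minimal piece $\rho^k(\widetilde\Sigma^+)$ attached as a barrier). This produces a sequence of $(\alpha,\delta,\mathbb{H})$-flows $\{K_t^j,\ 0\leq t\leq T\}$ with $K_0^j=\widetilde S^+$ and positive numbers $\e_j\to 0$ satisfying
\[
F_{t+\e_j}(\widetilde S^+)\ \subset\ K_t^j\ \subset\ F_t(\widetilde S^+), \qquad 0\leq t\leq T,
\]
with the usual refinement $F_{t+\e_j}(\widetilde S^+)\subset K_t^{j,+}\subset K_t^{j,\sharp}\subset K_t^{j,-}\subset F_t(\widetilde S^+)$ at surgery times. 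Fix $j$ large enough that $\e_j<\e$ and set $K_t:=K_t^j$.

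At a surgery time $t_i$, the pre- and post-surgery domains both lie between $F_{t_i}(\widetilde S^+)$ and $F_{t_i+\e_j}(\widetilde S^+)$; by the strict mean-convexity guaranteed by \Cref{theoremWhite}(1)(2) applied in the setting of \Cref{closedcompact}, the region $\overline{K^-_{t_i}\setminus K^+_{t_i}}$ is foliated by the smooth slices $F_s(\widetilde S)=\partial F_s(\widetilde S^+)$ for $s\in[t_i,t_i+\e_j]$, and so by the choice of $\e$ it is contained in a neighborhood on which $\pi$ is a diffeomorphism. An identical argument shows that at a regular time $t$, the boundary $\partial K_t$ sits inside $\cup_{s\in[t,t+\e_j]}F_s(\widetilde S)$, again inside a region where $\pi$ is a diffeomorphism.

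Because $\pi$ is a local isometry on these open sets, the images $\pi(\partial K_t)$, $\pi(\partial K^-_{t_i})$, $\pi(\partial K^\sharp_{t_i})$ and $\pi(\partial K^+_{t_i})$ are embedded, and each defining axiom of an $(\alpha,\delta,\mathbb{H})$-flow (\Cref{strongneck}--\Cref{alphadelta}) is preserved pointwise by $\pi$, so the projected family is a mean-convex flow with surgery on $M$ starting from $S$. The main obstacle in this argument is purely bookkeeping in the non-compact setting of $\widetilde M$; both the existence of the level set flow and Lauer's approximation require the initial data to be compact. \Cref{closedcompact} resolves this cleanly, allowing the entire analysis to proceed as in the compact case once one fixes the minimal barrier $X_k$.
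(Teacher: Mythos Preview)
Your argument is correct and follows essentially the same route as the paper: use \Cref{diffeoclose} to obtain $\e>0$, then Lauer's approximation (\Cref{theoremLauer}, via \Cref{closedcompact}) to trap the surgery flow between $F_t(\widetilde S^+)$ and $F_{t+\e_j}(\widetilde S^+)$ with $\e_j<\e$, and conclude that all surgery regions and boundaries lie where $\pi$ is a diffeomorphism. One minor wording issue: the region $\overline{K^-_{t_i}\setminus K^+_{t_i}}$ is not itself \emph{foliated} by the slices $F_s(\widetilde S)$, but is \emph{contained in} $F_{t_i}(\widetilde S^+)\setminus F_{t_i+\e_j}(\widetilde S^+)=\cup_{s\in[t_i,t_i+\e_j)}F_s(\widetilde S)$ (the paper cites \cite[Theorem 3.2]{WhiteSize} for this identity), which is what you need.
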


\begin{proof}
From \cite[Theorem 3.2]{WhiteSize}, for all $\e>0$, we have $F_{t}(\widetilde{S}^+)\setminus F_{t+\e}(\widetilde{S}^+) =\cup_{s\in[t,t+\e)}F_s(\widetilde{S})$. If $\e>0$ is small enough, \Cref{diffeoclose} implies that $\pi$ is a diffeomorphism on $F_{t}(\widetilde{S}^+)\setminus F_{t+\e}(\widetilde{S}^+)$. Let $K^j_t$ be one fo the flows with surgery given by \Cref{theoremLauer}, for $\e_j<\e$. Then if $t_i$ and $t$ are singular regular times, respectively, we have 
$$F_{t_i+\e_j} (\widetilde{S}^+)\subset K^+_t\subset K^\sharp_t\subset K^-_t  \subset F_{t_i}(\widetilde{S}^+)$$
$$F_{t+\e_j} (\widetilde{S}^+)\subset K_t \subset F_{t}(\widetilde{S}^+)$$
and the theorem follows since $F_{t+\e} (\widetilde{S}^+) \subset \operatorname{int} F_{t+\e_j} (\widetilde{S}^+)$. 
\end{proof}

\subsection{Proof of \Cref{theoremSurgery}}\label{proofSurgery}

\begin{proof}
Given a non-separating surface  $S \subset M$ with non-vanishing mean curvature, let $\widetilde{S}^+ \subset \M$ be the region defined in \Cref{remarklift}. The result follows from applying \Cref{surgeryprojects}, \Cref{theoremContinuation} and \Cref{diffeoclose} to the evolution of $\widetilde{S}^+\subset \M$ .
\end{proof}

\section{Monotone isotopies of incompressible surfaces}

We begin with the following definition.

\begin{definition}\label{defisotopy}
Let $S$ and $\Gamma$ embedded surfaces of a $3$-manifold $M$. Assume that $S$ has non-vanishing mean curvature. A strictly monotone mean-convex isotopy between $S$ and $\Gamma$, is a smooth map $\phi:S\times[0,1]\to M$ such that 
\begin{enumerate}
\item $\phi|_{S\times\{t\}}$ is an embedding for all $0\leq t\leq 1$.
\item $\phi(S\times\{0\})=S$.
\item $\phi(S\times\{1\})=\Gamma$.
\item $\langle \vec{H}(p,t),(\partial \phi/\partial t)(p,t) \rangle>0$, for all $0\leq t<1$ and $p\in S$, where $\vec{H}(p,t)$ denotes the mean curvature vector of $\phi(S\times\{t\})$ at $\phi(p,t)$.
\end{enumerate}
\end{definition}

\begin{remark}\label{remarkArea} Notice that $\Gamma$ in \Cref{defisotopy} is allowed to be a minimal surface. Also, as a consequence of item (4) in \Cref{defisotopy} (and the first variation formula) the function $$t\mapsto \operatorname{Area}(\phi(S\times \{t\}))$$ is strictly decreasing on $t\in[0,1]$.
\end{remark}

In this section we construct strictly monotone mean-convex isotopies for incompressible surfaces with non-vanishing mean curvature in negatively curved $3$-manifolds. We begin by describing the effect that neck surgeries have in the topology of incompressible domains evolving by a mean curvature flow with surgery.

\begin{definition}\label{deftrivial} Let $\Sigma\subset M$ be a connected surface. We say that $\Sigma'$ is obtained from $\Sigma$ by a \textit{neck surgery}, if $\Sigma\setminus \Sigma'$ is a $2$-annulus, $\Sigma'\setminus \Sigma$ is a union of two disjoint embedded $2$-disks and $(\Sigma\setminus \Sigma')\cup(\Sigma'\setminus \Sigma)$ is a $2$-sphere bounding a $3$-ball. In particular, a neck surgery is a disk compression. 
\end{definition}

We have the following simple consequence of the definition:
\begin{corollary}\label{topoltrivial}
Let $S'$ be a surface obtained after performing a neck surgery on another surface $S$ (in the sense of \Cref{deftrivial}). If $S$ is incompressible, then $S'$ has two connected components: one is a $2$-sphere and the other is incompressible with the same genus as $S$. 
\end{corollary}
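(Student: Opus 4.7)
My plan is to use incompressibility of $S$ to find disks in $S$ bounded by the two boundary circles $\gamma_1,\gamma_2$ of the annulus $A:=S\setminus S'$, determine how those disks fit relative to $A$, read off the two components of $S'$, and finally transfer incompressibility from $S$ to the non-sphere component. First, since the 2-sphere $A\cup D_1\cup D_2$ bounds a 3-ball in $M$ by \Cref{deftrivial}, each $\gamma_i:=\partial D_i$ is null-homotopic in $M$. By $\pi_1$-injectivity of $S$ and standard surface topology, each $\gamma_i$ bounds a disk $F_i\subset S$. In the main case $g(S)\geq 1$ (the genus-zero case is analogous), this disk $F_i$ is the unique disk side of $\gamma_i$ in $S$.

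Next I would pin down the position of $A$ relative to $F_1,F_2$. Near each $\gamma_i$ the annulus $A$ lies on exactly one of the two local sides of $\gamma_i$ in $S$, giving four cases. The case in which $A$ lies opposite $F_1$ at $\gamma_1$ and opposite $F_2$ at $\gamma_2$ yields $S=F_1\cup A\cup F_2=\mathrm{S}^2$, contradicting $g(S)\geq 1$. The case in which $A$ lies on the $F_1$-side and the $F_2$-side simultaneously forces $\gamma_2\subset F_1$ and $\gamma_1\subset F_2$, hence $F_2\subset F_1$ and $F_1\subset F_2$ by uniqueness of disk sides, i.e.\ $F_1=F_2$, contradicting $\gamma_1\ne\gamma_2$. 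After swapping indices, the remaining case is $A\subset F_1$ and $A\cap F_2=\gamma_2$. Since an embedded annulus inside a disk with prescribed boundary is unique, this gives $F_2\subset F_1$ and $A=F_1\setminus\operatorname{int}(F_2)$.

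This decomposition identifies the components of $S'$: the complement $S\setminus A$ is the disjoint union of the disk $F_2$ and the closed subsurface $S\setminus\operatorname{int}(F_1)$; capping with $D_2$ and $D_1$ respectively yields the 2-sphere $F_2\cup D_2$ and the closed surface $S'_1:=(S\setminus\operatorname{int}(F_1))\cup D_1$, obtained from $S$ by swapping the disk $F_1$ for the disk $D_1$ and hence of the same genus as $S$.

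Finally, for incompressibility of $S'_1$, let $\alpha\subset S'_1$ be a loop that is null-homotopic in $M$. A standard innermost-arc argument allows me to isotope $\alpha$ inside $S'_1$ off the disk $D_1$, pushing arcs of $\alpha\cap D_1$ across innermost sub-disks of $D_1$ onto $\gamma_1$ and then slightly off $\gamma_1$ to the $S\setminus\operatorname{int}(F_1)$ side. Now $\alpha\subset S\setminus\operatorname{int}(F_1)\subset S$, so incompressibility of $S$ produces a disk $E\subset S$ with $\partial E=\alpha$. Since $\alpha$ is disjoint from $\gamma_1=\partial F_1$, either $E\cap F_1=\emptyset$ and $E\subset S'_1$, or $F_1\subset E$ and $(E\setminus\operatorname{int}(F_1))\cup D_1\subset S'_1$ is a disk with boundary $\alpha$; either way $\alpha$ is null-homotopic in $S'_1$. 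The step that will require the most care is the configuration analysis in paragraph two, since $F_1,F_2,A$ are a priori independent subsets of $S$ and one needs both the uniqueness of the disk side of a nullhomotopic curve (for $g\geq 1$) and the uniqueness of an embedded annulus in a disk with prescribed boundary.
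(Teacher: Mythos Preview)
Your argument is correct and rests on the same key observation as the paper's proof: each boundary circle $\gamma_i$ of the surgery annulus bounds the embedded disk $D_i$ in $M$, so incompressibility of $S$ forces $\gamma_i$ to bound a disk $F_i\subset S$, which together with $D_i$ produces a $2$-sphere. The paper's proof records only this step and leaves the remaining assertions (that $S'$ has exactly two components, that the non-sphere piece has the same genus and is incompressible) implicit, whereas you carry them out explicitly via the configuration analysis and the disk-swap argument for $\pi_1$-injectivity of $S'_1$.
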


\begin{proof}
Let $c \subset \Sigma$ be one of the boundaries of the two 2-disks $\Sigma'\setminus \Sigma$. Since $c$ bounds a disk in $M$ (i.e. a compressible disk) and $\Sigma$ is incompressible, then $c$ also bounds a disk in $\Sigma$. The union of these two disks, which intersect only along $c$, is a $2$-sphere.
\end{proof}

Replacements of strong $\delta$-necks by standard caps, correspond to neck surgeries. In fact,  \Cref{caps}, \Cref{strongneck}, \Cref{replacing} and \Cref{alphaandrews}, imply that at every surgery time $t_i$, the region $\partial K_{t_i}^\sharp$ is obtained from $\partial K_{t_i}^-$ after performing finitely many neck surgeries in the sense of \Cref{deftrivial}.

\begin{definition}
We say that a smooth closed set $K$ is \textit{incompressible} if $\partial K$ is a \textit{connected} incompressible surface. Similarly, by the \textit{genus of $K$} we simply mean the genus of $\partial K$.
\end{definition}

Then, \Cref{topoltrivial} and \Cref{corollarycanonical} imply:

\begin{corollary}\label{incompressiblecomponents}
Let $\{K_t,0\leq t\leq T\}$ be a mean-convex mean curvature flow with surgery with $K_0$ incompressible. Then,
\begin{enumerate}
\item If $t$ is a surgery time, the sets $K^+_t \subset K^\sharp_t \subset K^-_t$ (see \Cref{replacing}) have exactly one incompressible component each, which we denote by $Q^+_t$,  $Q^\sharp_t$ and $Q^-_t$, respectively. In addition, $$Q^+_t=Q^\sharp_t\subset Q^-_t.$$
\item If $t$ is a regular time, the set $K_t$ has exactly one incompressible component, which we denote by $Q_t$.
\end{enumerate}   In particular, the incompressible component is never discarded. Any other component is bounded by a $2$-sphere. 
\end{corollary}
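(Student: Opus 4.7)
The plan is to induct over the finite sequence of surgery times $0<t_1<\dots<t_N\le T$; the inductive claim at step $i$ is that $K^+_{t_{i-1}}$ (with the convention $K^+_{t_0}:=K_0$) has exactly one incompressible component while every other component is bounded by a 2-sphere. The base case is immediate: by hypothesis $K_0$ is a single incompressible set, with no additional components.

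For the inductive step I will first propagate the statement across the smooth interval $[t_{i-1},t_i)$. Classical mean curvature flow on a smooth mean-convex domain is an ambient isotopy at the level of each connected component, so the topological type of every component, in particular incompressibility, is preserved up to the pre-surgery set $K^-_{t_i}$; call its unique incompressible component $Q^-_{t_i}$.

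I then analyze the surgery at time $t_i$. The set $K^\sharp_{t_i}$ is obtained from $K^-_{t_i}$ by finitely many independent replacements of strong $\delta$-necks by standard caps, and the definitions in \Cref{strongneck}--\Cref{alphaandrews} confirm that each such replacement is a neck surgery in the sense of \Cref{deftrivial}. Iterating \Cref{topoltrivial}: every neck surgery performed on the incompressible component pinches off a 2-sphere bounding a 3-ball and leaves exactly one incompressible component of the same genus, while a neck surgery performed on an already sphere-bounded component simply produces two such sphere-bounded components. Hence $K^\sharp_{t_i}$ has exactly one incompressible component $Q^\sharp_{t_i}\subset Q^-_{t_i}$, and all other components are sphere-bounded.

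The step that requires the most care is the discarding procedure producing $K^+_{t_i}$ from $K^\sharp_{t_i}$: I must rule out the possibility that the incompressible component is thrown away. The tool is \Cref{corollarycanonical}, which asserts that every discarded component is diffeomorphic to either a 3-ball or a solid torus. A 3-ball is excluded because $\partial Q^\sharp_{t_i}$ has positive genus. A solid torus is excluded because its boundary torus compresses along a meridian disk, contradicting incompressibility of $\partial Q^\sharp_{t_i}$. Therefore $Q^+_{t_i}=Q^\sharp_{t_i}$, which closes the induction and simultaneously yields both the chain $Q^+_t=Q^\sharp_t\subset Q^-_t$ and the ``in particular'' assertions.
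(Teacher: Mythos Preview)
Your proof is correct and follows exactly the approach the paper indicates: the paper simply states that \Cref{topoltrivial} and \Cref{corollarycanonical} imply the corollary, and you have spelled out the induction over surgery times that makes this implication precise. The only detail worth noting is that your treatment of neck surgeries on already sphere-bounded components does not literally follow from \Cref{topoltrivial} (which assumes incompressibility), but this is an elementary observation since cutting a $2$-sphere along an annulus and capping yields two $2$-spheres.
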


The idea for the construction of the isotopy in \Cref{theoremIncompressible} is to observe that if components bounded by $2$-spheres eventually vanish, then by \Cref{corollarycanonical} they correspond to $3$-balls. Then, one can use the gluing map from \cite[Theorem 4.1]{BuzanoSpheres} in combination with the isotopy constructed in \cite{HaslhoferKetover} to produce an isotopy between  $Q^-_t$ and $Q^\sharp_t$. 

The analytical tools for the construction of monotone isotopies are present in the works \cite{HaslhoferKetover}, \cite{BuzanoSpheres} and \cite{BuzanoTori}. We begin summarizing some of their definitions and results with a focus on how to bring the ideas to the context of incompressible surfaces. An important device in their work is the gluing map $\G$. Roughly speaking,  this map takes a small constant $r>0$, a mean-convex domain $K$ and a compact curve $
\gamma$ , with $\partial \gamma\subset \partial K$, and produces a new mean-convex domain $\G_r(K,\gamma)$ which is the result of gluing $K$ with a tubular neighbourhood of $\gamma$ of radius $r$. Both $K$ and $\gamma$ might have multiple finitely many components. As in  \cite{HaslhoferKetover}, we do not present the definition of the gluing map here, but refer the reader to \cite[Theorem 4.1]{BuzanoSpheres}

\begin{definition}
A \textit{marble tree} is a domain of the form $\G_r(D,\gamma)$, where:
\begin{enumerate}
\item $D=\cup_i B_i$ is a union of finitely many disjoint closed balls of the same radius,
\item $\gamma$ is a compact embedded curve with finitely many components
\item $(\gamma\setminus \partial\gamma)\cap D=\emptyset$,
\item $\partial \gamma \subset \partial D$,
\item $D\cup \gamma$ is simply connected.
\end{enumerate}
\end{definition}

The following result follows from the proof of \cite[Theorem 8.1]{HaslhoferKetover}:
\begin{theorem}\label{balltree}
Let $B\subset M^3$ be a smooth closed $3$-ball with strictly mean-convex boundary. If $B_\infty=\cap_{t\geq 0}F_t(B)=\emptyset$, then $B$ is isotopic to a marble tree $\G_r(D,\gamma) \subset B$ via a strictly monotone mean-convex isotopy. 
\end{theorem}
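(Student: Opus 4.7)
The plan is to run the mean-convex mean curvature flow with surgery of \Cref{mcfexistence} on $B$, and then convert the resulting discrete process into a continuous strictly monotone mean-convex isotopy by absorbing each neck surgery and each discarded component into a marble-tree-like attachment described by the gluing map $\G_r$ of \cite[Theorem 4.1]{BuzanoSpheres}.

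First I would fix $\delta\ll 1$ and $H_{\mathrm{trig}}\gg H_{\mathrm{neck}}\gg H_{\mathrm{th}}\gg 1$ fine enough that the canonical neighborhood theorem (\Cref{canonical}, \Cref{corollarycanonical}) applies and every discarded component is diffeomorphic to a $3$-ball whose boundary is $C^{[1/\varepsilon]}$-close to a small round sphere of radius comparable to $H_{\mathrm{th}}^{-1}$. Since $B$ is simply connected and a neck surgery performed on a simply connected component produces simply connected pieces (compare \Cref{topoltrivial}), the solid torus alternative in \Cref{corollarycanonical} does not occur. Because $B_\infty=\emptyset$ there are only finitely many surgery times $t_1<\dots<t_N$ after which every component has been discarded. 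On each open interval $(t_{i-1},t_i)$ the smooth mean curvature flow is already a strictly monotone mean-convex isotopy in the sense of \Cref{defisotopy}, so the real content is the handling of the surgeries and discarding at the $t_i$.

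The key structural input is that, by item (4) of \Cref{replacing}, each pre-surgery domain $K^-_{t_i}$ can be identified with $\G_{r_i}(K^+_{t_i}\cup D_i,\gamma_i)$, where $D_i$ is a union of nearly-round small balls representing the discarded components and $\gamma_i$ is a collection of short arcs whose endpoints lie on $\partial(K^+_{t_i}\cup D_i)$. The gluing construction of \cite[Theorem 4.1]{BuzanoSpheres} provides, for $r\in(0,r_i]$, a strictly mean-convex $1$-parameter family interpolating between $\G_{r_i}(K^+_{t_i}\cup D_i,\gamma_i)$ and $\G_{r}(K^+_{t_i}\cup D_i,\gamma_i)$; this replaces the abrupt surgery by a smooth contraction of the glued tubes while keeping their boundaries strictly mean-convex. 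Concatenating these local deformations with the smooth MCF on $(t_{i-1},t_i)$ gives, on each piece, a strictly monotone mean-convex motion.

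The main obstacle, and technical heart of the argument, is threading these pieces together into one globally strictly monotone mean-convex isotopy that actually ends at a marble tree, while simultaneously replacing the ``discarded'' balls of \Cref{corollarycanonical} by genuine marble-tree vertices of a common radius $r$. I would organize this by reverse induction on the surgery index: just after $t_N$, every remaining component of $K^+_{t_N}$ together with the associated discarded balls is a union of nearly-round small balls, hence a depth-zero marble tree. Reinstating the surgery at $t_N$ through the $\G_{r_N}$ model attaches edges $\gamma_N$ and vertices $D_N$ to produce a marble tree; running MCF backward on $(t_{N-1},t_N)$ then evolves this configuration, and by the closeness between the level set flow and the flow with surgery (\Cref{theoremLauer}), together with the canonical neighborhood control on the discarded balls, the marble-tree structure is preserved up to a smooth isotopy. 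Iterating this step back to $t=0$ yields a strictly monotone mean-convex isotopy from $B$ to a marble tree $\G_r(D,\gamma)\subset B$, with condition (4) of \Cref{defisotopy} holding on every smooth subinterval by construction and on the gluing intervals by the mean-convexity built into $\G_r$.
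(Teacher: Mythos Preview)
The paper does not give its own proof of this theorem; it simply cites \cite[Theorem 8.1]{HaslhoferKetover}. Your proposal assembles the correct raw ingredients (flow with surgery, the canonical neighborhood theorem to classify discarded pieces, simple-connectedness to exclude solid tori, the gluing map $\G_r$), but the way you thread them together has a genuine gap.

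The problematic step is the phrase ``running MCF backward on $(t_{N-1},t_N)$ then evolves this configuration, and \dots the marble-tree structure is preserved up to a smooth isotopy.'' Mean curvature flow is parabolic and cannot be run backward; and even read charitably as ``use the already-computed forward flow on $(t_{N-1},t_N)$,'' this does not do what you want. The forward flow on that interval is an isotopy from $K^+_{t_{N-1}}$ to $K^-_{t_N}$, not a device that transports a marble-tree structure backward in time. Relatedly, your claim that $K^-_{t_i}$ ``can be identified with'' $\G_{r_i}(K^+_{t_i}\cup D_i,\gamma_i)$ via item (4) of \Cref{replacing} is too strong: that item only says the post-surgery domain is $\delta'$-close to a pair of standard caps, not that the pre-surgery domain equals a $\G_r$-gluing. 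The actual bridge across a surgery is \Cref{firstisot} (Corollary 6.4 of \cite{HaslhoferKetover}), which produces a strictly monotone mean-convex isotopy from $K^-$ to $\G_r(K^\sharp_{\bar t},\gamma_{\bar t})$ for some small $\bar t>0$; you never invoke it.

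The Haslhofer--Ketover scheme that the paper is citing runs \emph{forward} and is organised as an induction on the number of surgeries. One flows smoothly to the first surgery time, applies \Cref{firstisot} to replace the surgery by a thin-tube gluing of the post-surgery components, and then observes that each post-surgery component is itself a strictly mean-convex $3$-ball that becomes extinct with strictly fewer surgeries; by induction each is isotoped to a marble tree, and gluing marble trees along the tubes $\gamma$ again yields a marble tree. Discarded components (nearly round or capsule-shaped, not necessarily round) are handled the same way, as base cases of the induction. Your ``reverse induction on the surgery index'' can be salvaged into this, but only after you drop the backward-flow language and route each surgery through \Cref{firstisot}.
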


We will also need \cite[Corollary 6.4]{HaslhoferKetover}. This result allows us to join disconnected isotopies for a short time instead of performing surgeries or discarding components: 
\begin{proposition}\label{firstisot} \cite[Corollary 6.4]{HaslhoferKetover} Let $K^\sharp\subset M^3$ the result of performing surgeries over $K^-$ on a disjoint collection of $\delta$-necks. Assume that $\{K_t^\sharp\}$ is a strictly monotone mean-convex evolution of $K^\sharp=K_0^\sharp$. If $\gamma$ be the union of the almost straight lines connecting the tips of the opposing standard caps of each surgery, we denote by $\{\gamma_t\}$ the family of curves which follows $K_t^\sharp$ by normal motion starting at $\gamma$. Then, if $\delta$ is small enough, there exists a small perturbation of $\{\gamma_t\}$, which we also call $\{\gamma_t\}$, and $r$  and $\bar t$ small enough such that $K^-$ is isotopic to $\G_r(K^\sharp_{\bar t},\gamma_{\bar t})$. The isotopy is strictly monotone and mean-convex.
\end{proposition}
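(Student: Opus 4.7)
The plan is to build the isotopy in two stages: first ``undo'' the surgery in reverse at the original neck scale, and then shrink simultaneously in the neck radius and in evolution time. The starting observation is that, because the surgery was performed on strong $\delta$-necks (\Cref{strongneck}) and the replacement was by standard caps (\Cref{caps},\Cref{replacing}), the pre-surgery domain $K^-$ near each neck is, after rescaling by $s^{-1}$ where $s$ is the neck radius, $\delta$-close in $C^{[1/\delta]}$ to a round solid cylinder, while $K^\sharp$ in the same region is $\delta'(\delta)$-close to a pair of opposing standard caps whose tips sit at distance $\Gamma$ along the axis. Thus the almost-straight segment $\gamma$ is essentially the axis of the original neck, and the gluing of a tube of radius $\approx s$ around $\gamma$ to $K^\sharp$ approximately recovers $K^-$.

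In the first stage I would apply the gluing map of \cite[Theorem 4.1]{BuzanoSpheres} to form $\G_{r'}(K^\sharp,\gamma)$ for $r'\leq s$ slightly less than $s$. This map is designed so that the output is smooth and strictly mean-convex, with mean curvature of order $1/r'$ along the tube — which dominates the bounded ambient contributions when $r'$ is small. Using the local models from \Cref{strongneck} and \Cref{replacing}, one checks that for $\delta$ small enough $\G_{r'}(K^\sharp,\gamma)\subset\operatorname{int}K^-$, and that $K^-$ can be smoothly retracted to $\G_{r'}(K^\sharp,\gamma)$ by a radial-type deformation in the local cylindrical coordinates which is strictly inward along the boundary and strictly mean-convex at each intermediate slice. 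In the second stage I would use the given strictly monotone mean-convex evolution $K^\sharp_t$, picking $\bar t>0$ small and $r\leq r'$, and interpolate continuously by simultaneously advancing $t$ from $0$ to $\bar t$ and shrinking the gluing radius from $r'$ to $r$. Throughout, the curve $\gamma_t$ is defined by the prescribed normal motion, and a small perturbation is inserted so that $\gamma_t$ remains transverse to $\partial K^\sharp_t$ and the junctions between the tube and the moving caps stay smooth and strictly mean-convex. Strict monotonicity then follows from both that the evolution $K^\sharp_t$ is strictly monotone by hypothesis and that reducing the tube radius pushes the boundary strictly inward.

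The main obstacle is the analytic control of the mean curvature at the junction where the tube meets the moving standard cap. Here the cap has mean curvature of order $1/s$ near its tip, the tube has mean curvature of order $1/r$, and they must be patched together without creating a region of vanishing or negative mean curvature while the cap tips drift under the evolution $K^\sharp_t$. The role of the ``small perturbation'' of $\gamma_t$ in the statement is precisely to adjust the tube axis so that the two geometries match smoothly; to verify strict mean-convexity uniformly in $t\in[0,\bar t]$ I would rely on the quantitative estimates built into the gluing map of \cite[Theorem 4.1]{BuzanoSpheres}, which are uniform in the cap geometry provided the cap is close to the standard one, together with the hypothesis that $\delta$ is small enough to guarantee this closeness. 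Once strict mean-convexity and strict inward motion are established at every instant of the concatenated two-stage deformation, the isotopy between $K^-$ and $\G_r(K^\sharp_{\bar t},\gamma_{\bar t})$ satisfies all requirements of \Cref{defisotopy}.
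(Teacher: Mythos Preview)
The paper does not give its own proof of this proposition: it is imported verbatim as \cite[Corollary 6.4]{HaslhoferKetover} and used as a black box in the proof of \Cref{ourisot}. So there is no argument in the present paper to compare your proposal against.

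That said, your outline is in the right spirit and tracks the structure of the Haslhofer--Ketover argument: one first interpolates from $K^-$ to a glued domain $\G_{r'}(K^\sharp,\gamma)$ at a scale comparable to the neck radius, and then lets the post-surgery piece evolve while the string radius shrinks. A word of caution on your Stage~1: the ``radial-type deformation in local cylindrical coordinates'' from $K^-$ to $\G_{r'}(K^\sharp,\gamma)$ is exactly where the analytic content lives, and it is not automatic that every intermediate slice is strictly mean-convex just because the endpoints are. In the Haslhofer--Ketover/Buzano--Ha\c{s}lhofer--Hershkovits framework this is handled by the explicit profile functions built into the gluing map and the neck-to-cap interpolation (this is where the quantitative $C^{[1/\delta]}$-closeness to the round cylinder and to the standard cap is actually spent), not by a soft radial retraction. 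If you want a self-contained argument rather than a citation, that is the step you would need to write out carefully.
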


Following \Cref{incompressiblecomponents}, at a surgery time we can decompose $Q^-_t$ into the union of $Q^\sharp_t$ with some necks and regions bounded by $2$-spheres. More precisely, assume there are $k$ surgeries intersecting $Q^\sharp_t$, which we label using $j=1,\dots, k$. Each one of these surgeries has two associated closed sets: a $\delta$-neck $N^{j}_t$, which is simply the $3$-ball from \Cref{deftrivial}, and a smooth mean-convex region $B^{j}_t\subset K^{\sharp}_t$ spliting out of the neck and that is bounded by a $2$-sphere. Therefore, we have the decomposition \begin{equation}\label{partition}Q^-_t=Q^\sharp_t\cup (\cup_{j=1}^k N^{j}_t)\cup (\cup_{j=1}^k B^{j}_t).\end{equation}
 
\begin{remark} To produce the decomposition from \cref{partition} we have ignored all surgeries of $K^-_t$ disjoint from $Q^\sharp_t$. Notice that, $B_t^{j}\cap N^{j}_t$ and $Q_t^{\sharp}\cap  N^{j}_t$ are non-empty smooth 2-disks, for all $j$, and $B_t^{j}\cap (N^{j'}_t \cup B^{j'}_t \cup Q^\sharp_t)=\emptyset$, for all $j\neq j'$. Moreover, if $B^{j}_t$ it becomes extinct, \Cref{corollarycanonical} implies that these region is a $3$-ball.
\end{remark}

\begin{proposition}\label{ourisot} Let $Q_t^-,Q_t^\sharp$ and $Q_t$ be the incompressible components from \Cref{incompressiblecomponents}. Let $t_i$ be a surgery time and $$Q^-_{t_i}=Q^\sharp_{t_i}\cup (\cup_{j=1}^k N^{j}_{t_i})\cup (\cup_{j=1}^k B^{j}_{t_i})$$ the decomposition from \cref{partition}. If the level set flow of $B^j_{t_i}$ vanishes in finite time, then for $\delta$ small enough, there exists a strictly monotone mean-convex isotopy starting at $\partial Q_{t_i}^-$ and ending at $\partial Q_{t_i+\e}$, for all $\e$ small enough.\end{proposition}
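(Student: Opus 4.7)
The plan is to decompose the desired isotopy from $\partial Q^-_{t_i}$ to $\partial Q_{t_i+\e}$ into four successive stages: (i) identify each $B^j_{t_i}$ as a $3$-ball; (ii) apply \Cref{firstisot} to replace the surgery by thin gluing tubes; (iii) use \Cref{balltree} to shrink each side piece to a marble tree; and (iv) retract the resulting marble trees into $Q^\sharp$ as in \cite{HaslhoferKetover}.

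For step (i), since the level-set flow of $B^j_{t_i}$ vanishes in finite time, \Cref{theoremLauer} provides an approximating mean-convex flow with surgery whose corresponding component also becomes extinct for sufficiently fine surgery parameters. The canonical neighbourhood result \Cref{corollarycanonical} then forces $B^j_{t_i}$ to be a $3$-ball or a solid torus, and \Cref{topoltrivial} rules out the latter since $\partial B^j_{t_i}$ is a $2$-sphere. For step (ii), I would take $\{K^\sharp_s\}_{s\in[0,\bar s]}$ to be the smooth mean curvature flow starting at $K^\sharp = Q^\sharp_{t_i}\cup(\cup_j B^j_{t_i})$ and invoke \Cref{firstisot}: for $\delta$ small enough, this yields a strictly monotone mean-convex isotopy from $Q^-_{t_i}$ to $\G_r(K^\sharp_{\bar s},\gamma_{\bar s})$, a configuration consisting of $Q^\sharp_{t_i+\bar s}$ joined to each $B^j_{t_i+\bar s}$ by a thin tube along the corresponding curve of $\gamma_{\bar s}$.

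Step (iii) is then carried out on this glued configuration. For $\bar s$ sufficiently small, each $B^j_{t_i+\bar s}$ remains a smooth mean-convex $3$-ball whose level-set flow still vanishes in finite time, so \Cref{balltree} produces a strictly monotone mean-convex isotopy into a marble tree $\G_{r_j}(D^j,\gamma^j)\subset B^j_{t_i+\bar s}$. Executing these isotopies in parallel, while letting the attaching curves of $\gamma_{\bar s}$ elongate smoothly so that their endpoints track the receding boundaries $\partial B^j_{t_i+\bar s}$, ends at $Q^\sharp_{t_i+\bar s}$ with marble trees attached via (now longer) tubes. Step (iv) is the leaf-by-leaf retraction of these trees in the spirit of \cite{HaslhoferKetover}: contract an outermost ball, absorb the adjacent tube into the next ball of the tree, and iterate inward until every tree has been pulled back into $Q^\sharp_{t_i+\bar s}=Q_{t_i+\bar s}$. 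Finally, for $\e\geq\bar s$ small enough that no new surgery occurs on $(t_i,t_i+\e]$, the smooth mean curvature flow gives a strictly monotone mean-convex isotopy from $Q_{t_i+\bar s}$ to $Q_{t_i+\e}$, and concatenation proves the proposition.

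The hard part will be step (iv): the retraction procedure of \cite{HaslhoferKetover} is stated for marble trees inside a single $3$-ball, whereas here the trees are attached to the topologically nontrivial incompressible domain $Q^\sharp_{t_i+\bar s}$. Since the retraction acts only in tubular neighbourhoods of the trees and their standard caps, the local construction should carry over unchanged; what one must verify is that each elementary step remains strictly mean-convex near the fixed incompressible region $Q^\sharp_{t_i+\bar s}$ and that the elongated tubes produced in step (iii) can be absorbed consistently into the outermost ball of each tree without breaking monotonicity.
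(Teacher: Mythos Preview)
Your proposal is essentially the paper's proof: both combine \Cref{firstisot}, \Cref{balltree}, and the marble-tree retraction of \cite{HaslhoferKetover} in the same way, and the paper likewise runs the $Q^\sharp$-side by mean curvature flow while the $B$-side follows the \Cref{balltree} isotopy, gluing throughout via a slowly varying $\G_{r(t)}$ (the only cosmetic difference is that the paper feeds the \Cref{balltree} isotopy directly into \Cref{firstisot} as the $K^\sharp_t$-evolution rather than applying \Cref{firstisot} first with the smooth flow). One small imprecision in your step (i): \Cref{corollarycanonical} classifies \emph{discarded} components of an $(\alpha,\delta,\mathbb{H})$-flow, not initial domains whose level-set flow vanishes, so the correct argument is that neck surgeries on an $S^2$-bounded region produce only $S^2$-bounded pieces, each of which is eventually discarded and hence a $3$-ball by \Cref{corollarycanonical}, whence $B^j_{t_i}$ itself is a $3$-ball by reassembling along the necks.
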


\begin{proof}
We follow the ideas of \cite[Claim 6.6]{HaslhoferKetover} and \cite[Theorem 8.1]{HaslhoferKetover}. To simplify the notation, we avoid the subindex $t_i$ and  assume there is only one surgery, i.e. $$Q^-=Q^\sharp \cup N \cup B.$$ The general case follows along the same lines. 

From \Cref{balltree}, there exists a strictly monotone mean-convex isotopy $
\{B_t\}_{t\in[0,1]}$, starting at $B_0=B$ and ending on a marble tree $B_1.$ Denote by $\beta$ the almost straight line between the tips of the pair of standard caps of the surgery. Let $\beta_t$ the result of extending $\beta$ by following the two points $\beta\cap B_0$ and $\beta\cap Q_{t_i+t}$ along the evolutions $B_t$ and $Q_{t_i+t}$. By \Cref{firstisot}, $Q^-$ is isotopic to $\G_r(Q_{t_i+\bar t}\cup B_{\bar t},\beta_{\bar t})$, for some small $r$, via a strictly monotone mean-convex isotopy. As in the proof of \cite[Claim 6.6]{HaslhoferKetover} one can choose  a slowly decreasing positive function $r(t)>0$, with $r(\bar t)=r$, and a small slowly increasing function $\bar r(t)$, with $\bar r(\bar t)=t_i+\bar t<t_i+\e/2=\bar r(1)$, so that $\G_{r(t)}(Q_{\bar r(t)}\cup B_{t},\beta_{t})$ continues the isotopy after $\bar t$ with the desired monotonicity and mean-convexity. The extreme points of $\beta_{ t}$ now follow the evolutions of $Q_{\bar r(t)}$ and $B_{t}$ until the we reach $\G_{r(1)}(Q_{t_i+\e/2}\cup B_{1},\beta_{1})$, where $r$. This domain is the result of gluing  $Q_{t_i+\e/2}$ with the marble tree $B_{1}$ by attaching a narrow tube surrounding $\beta_1$. As in the proof of \cite[Theorem 6.1]{HaslhoferKetover}, one can use a variant of \cite[Section 5]{BuzanoSpheres} to shrink the marble tree monotonically into a small tube attached to $Q_{t^*}$, for some $t_i+\e/2<t^*<t_i+\e$. While doing so one keeps evolving $Q_{t}$ after time $t_i+\e/2$ as slow as necessary. Finally, one pushes the tube into $Q_{t}$, before it reaches $t=t_i+\e$.

\end{proof}

We can now prove the main result of this section:

\subsection{Proof of \Cref{theoremIncompressible}}\label{proofIncompressible}
\begin{proof}

Items (a1), (a2) and (b) follow from \Cref{incompressiblecomponents}, \Cref{corollarycanonical}, \Cref{theoremSurgery} and item (6) of  \Cref{theoremWhite}. The only point that remains to be shown is the existence of the isotopy.

We begin with the separating case. Let $S=\partial K$ where $K$ is a smooth mean-convex set. From the proof of \Cref{theoremContinuation} there exists $T>0$ such that, for all $\e>0$, there is a flow with surgery $$\{K_t,0\leq t\leq T\}$$ such that:
\begin{enumerate}
\item $K_0=K$,
\item $F_{t+\e}(K) \subset K_t\subset F_t(K)$, for all regular times $0\leq t\leq T$,
\item $F_{{t_i}+\e}(K) \subset  K^+_{t_i}\subset K^\sharp_{t_i}\subset K^-_{t_i}\subset F_{t_i}(K)$, for every surgery time $t_i$,
\item $F_t(K_T)$ is a classical flow for all $t\geq 0$,
\item $F_t(K_T)\to \Gamma$, $t\to\infty$ where components converge as either a one-sheeted or two-sheeted graph.
\end{enumerate}

For simplicity, for all $t\geq T$, we denote $$K_{t}:=F_{t-T}(K_T).$$ 

Denote by $Q^-_{t_i}$, $Q^\sharp_{t_i}$ and $Q_t$, the incompressible components from \Cref{incompressiblecomponents} for surgery and regular times $t_i$ and $t$, respectively.

\begin{claim*}
$K_0=Q_0$ and $K_T=Q_T$.
\end{claim*}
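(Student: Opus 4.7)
The plan is to dispatch the two equalities separately, each amounting to the assertion that no auxiliary spherical components are present at the distinguished times $t=0$ and $t=T$.

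For $K_0 = Q_0$, I would simply unpack the definitions in the separating case: by item (1) we have $K_0 = K$, which by the hypothesis of \Cref{theoremIncompressible} is a smooth mean-convex region with connected incompressible boundary $S$. Since a connected $S$ separates $M$ into two connected pieces, $K_0$ is itself a connected smooth region with incompressible boundary; by the identification in \Cref{incompressiblecomponents} it already coincides with its unique incompressible component $Q_0$, no extraneous components having been created yet.

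For $K_T = Q_T$, I would argue by contradiction using \Cref{incompressiblecomponents}, which tells us $K_T$ is the disjoint union of $Q_T$ with (at most) finitely many components each bounded by a topological $2$-sphere. Suppose $B \subset K_T$ were such an extra component. By property (4) in the setup, $F_t(B)$ is a classical smooth mean curvature flow of a mean-convex region whose boundary is a $2$-sphere; enclosing $\partial B$ in a round ball in a normal chart and comparing with its shrinking MCF evolution via the avoidance principle, $F_t(B)$ must extinct in finite time, producing a singularity of $F_t(K_T)$ and contradicting the smoothness for all $t \geq 0$ asserted in (4). If one prefers to read (4) as permitting smooth extinction of closed components, the same conclusion follows from (5) combined with the hypothesis of case (a1) that $\Gamma = \Gamma'$: the smooth one- or two-sheeted graphical convergence $F_t(K_T) \to \Gamma$, with the stipulated multiplicity, matches each component of $K_T$ with a topological counterpart in the limit, so $\partial B$ would force a spherical component of $\Gamma$, contradicting the incompressibility of $\Gamma = \Gamma'$.

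The main obstacle, as I see it, is precisely this reconciling step at $t=T$: one must be careful about what ``classical flow for all $t\geq 0$'' in (4) actually forbids. If it rules out finite-time extinction of components, the argument via the comparison with a round shrinking ball is immediate; if it only rules out the need for surgery, one must lean on the topological rigidity of the limit coming from the case (a1) assumption $\Gamma \setminus \Gamma' = \emptyset$. Once this is settled the claim closes at once, and the proof of \Cref{theoremIncompressible} can proceed by constructing the isotopy on the single incompressible component without having to track spherical debris.
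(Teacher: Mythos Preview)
Your argument for $K_0=Q_0$ matches the paper's.

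For $K_T=Q_T$, your \emph{second} alternative is exactly the paper's proof: by \Cref{incompressiblecomponents} any extra component of $K_T$ is bounded by a $2$-sphere, and since (4) guarantees the flow from $K_T$ is classical for all $t\geq 0$, each such component persists and by (5) converges graphically (one- or two-sheeted) to a component of $\Gamma$; classical degree theory then forces that limit component to be $\mathrm{S}^2$ or $\mathbb{RP}^2$, contradicting the standing hypothesis $\Gamma\setminus\Gamma'=\emptyset$.

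Your \emph{first} alternative, however, has a gap. You propose to enclose $B$ (or $\partial B$) in a round ball in a normal chart and force finite-time extinction by comparison with the shrinking ball. But nothing tells you the three-dimensional region $B$ is small enough to sit inside a geodesic ball of the ambient manifold: at this stage you only know $\partial B$ is a $2$-sphere, not that $B$ has small diameter or even that it is a $3$-ball. So the inclusion/avoidance comparison does not go through as written. This detour is in any case unnecessary: property (4) already excludes extinction (extinction of a closed component is a singularity of the smooth flow), so you can pass directly to the limit argument via (5), which is precisely what the paper does.
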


For $K_0=Q_0$ notice we are assuming that $S$ is connected and incompressible. To see $K_T=Q_T$, observe that $K_T$ by \Cref{incompressiblecomponents} if $K_T\setminus Q_T$ was not empty, then $K_T$ would have components bounded by $2$-spheres. Then, by (5) and classical degree theory, $\Gamma$ would contain either a copy of $\mathrm{S}^2$ or $\mathbb{RP}^2$, which contradicts our assumptions.

Now, let $$Q^-_{t_i}=Q^\sharp_{t_i}\cup (\cup_{j=1}^k N^{j}_{t_i})\cup (\cup_{j=1}^k B^{j}_{t_i})$$ be the decomposition from \cref{partition}. 

\begin{claim*}
$\cap_{t\geq 0} F_t(B^{j}_{t_i})=\emptyset$.
\end{claim*}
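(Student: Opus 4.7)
The plan is to argue by contradiction: suppose $B^j_\infty := \cap_{t\geq 0} F_t(B^j_{t_i})$ is non-empty, and derive a contradiction with the standing hypothesis $\Gamma\setminus\Gamma' = \emptyset$. First I would apply \Cref{theoremWhite} to the compact smooth strictly mean-convex domain $B^j_{t_i}$, whose boundary is a topological $2$-sphere made of the surgery cap together with the spherical piece of $\partial K^-_{t_i}$ cut off by the neck $N^j_{t_i}$. Strict mean-convexity forces $F_t(B^j_{t_i}) \subsetneq B^j_{t_i}$ for all $t>0$, so $B^j_\infty \subsetneq B^j_{t_i}$; if $B^j_\infty$ is non-empty, then by item (5) of \Cref{theoremWhite} its boundary is a non-empty disjoint union of embedded stable minimal surfaces contained in $\operatorname{int}(B^j_{t_i})$.

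The main step is to identify $B^j_\infty$ with $K_\infty\cap B^j_{t_i}$, where $K_\infty = \cap_t F_t(K)$. For this I would use \Cref{theoremLauer} to pick a flow with surgery satisfying $F_{t_i+\e}(K)\subset K^\sharp_{t_i}$, together with the fact that $K^\sharp_{t_i}$ is the disjoint union of the components $Q^\sharp_{t_i}$ and the $B^{j'}_{t_i}$. Setting $C^j := F_{t_i+\e}(K) \cap B^j_{t_i}$, the avoidance principle applied to the decomposition of $F_{t_i+\e}(K)$ into $C^j$ and its complement gives $F_{t_i+\e+s}(K) \cap B^j_{t_i} = F_s(C^j)$ for all $s\geq 0$: the complement stays inside the mean-convex region $Q^\sharp_{t_i}\cup\bigcup_{j'\neq j}B^{j'}_{t_i}$ and hence remains disjoint from $B^j_{t_i}$. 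Taking intersection in $s$ yields $K_\infty\cap B^j_{t_i} = C^j_\infty$. On the other hand, using $B^j_{t_i}\subset K^-_{t_i}\subset F_{t_i}(K)$ together with mean-convexity and the inclusion principle, one has $F_\e(B^j_{t_i})\subset C^j\subset B^j_{t_i}$; applying $F_t$ and intersecting gives $B^j_\infty \subset C^j_\infty \subset B^j_\infty$, so $B^j_\infty = K_\infty\cap B^j_{t_i}$.

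To conclude, note that $B^j_{t_i}$ is disjoint from $Q^\sharp_{t_i}$ and that, by strict mean-convex monotonicity of the incompressible piece, $\Gamma' \subset \cap_t F_t(Q^\sharp_{t_i})\subset Q^\sharp_{t_i}$, so $\Gamma'\cap B^j_{t_i}=\emptyset$. Thus $\partial B^j_\infty$ is a non-empty collection of components of $\Gamma=\partial K_\infty$ all disjoint from $\Gamma'$, i.e. contained in $\Gamma\setminus\Gamma'$, contradicting the standing hypothesis. I expect the main obstacle to be the careful bookkeeping at the surgery time: the level set flow of $K$ does not itself perform surgeries, so the topological splitting produced at time $t_i$ by the flow with surgery has to be recovered from \Cref{theoremLauer} combined with the avoidance principle in order to match $K_\infty\cap B^j_{t_i}$ with the level set flow of $B^j_{t_i}$ alone.
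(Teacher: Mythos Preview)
Your argument is correct, but it is considerably more elaborate than necessary. The paper dispatches the claim in two lines using only the trivial one-sided inclusion $B_\infty := \cap_{t\geq 0} F_t(B^j_{t_i}) \subset K_\infty$ (from $B^j_{t_i}\subset Q^-_{t_i}\subset K^-_{t_i}\subset F_{t_i}(K)$ and the inclusion principle), together with the observation, drawn from the \emph{previous} claim $K_T=Q_T$, that $K_\infty=\cap_{t\geq 0}F_t(Q_T)\subset Q_T\subset Q^\sharp_{t_i}$. Since also $B_\infty\subset B^j_{t_i}$ and $B^j_{t_i}\cap Q^\sharp_{t_i}=\emptyset$ by the decomposition \eqref{partition}, one gets $B_\infty=\emptyset$ directly.

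By contrast, you work to establish the \emph{equality} $B^j_\infty=K_\infty\cap B^j_{t_i}$ via an avoidance-principle argument, and then argue that a non-empty $\partial B^j_\infty$ would land in $\Gamma\setminus\Gamma'$. That equality is correct and your derivation of it is fine, but it is not needed here: only the inclusion $B_\infty\subset K_\infty$ matters. Note also that your concluding step, ``by strict mean-convex monotonicity of the incompressible piece, $\Gamma'\subset\cap_t F_t(Q^\sharp_{t_i})\subset Q^\sharp_{t_i}$'', is really the same fact the paper uses (namely $K_\infty\subset Q^\sharp_{t_i}$, which already contains $\Gamma'$), just phrased less directly; once you have that, the contradiction is immediate without ever identifying $B^j_\infty$ with $K_\infty\cap B^j_{t_i}$. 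So the ``main obstacle'' you anticipated---reconciling the surgery splitting with the level set flow via \Cref{theoremLauer} and the avoidance principle---can be sidestepped entirely.
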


Let $B_\infty=\cap_{t\geq 0} F_t(B^{j}_{t_i})$ and $K_\infty=\cap_{t\geq 0}F_t(K)$. On one hand we have $B^{j}_{t_i}\subset Q^{-}_{t_i} \subset K^-_{t+i}\subset  F_{t_i}(K),$ from which it follows that $B_\infty \subset K_\infty$. From the previous claim and the definition of $K_t$, we also have $K_\infty= \cap_{t\geq 0}F_t(Q_T)\subset Q^{\sharp}_{t_i}.$
On the other hand $B_\infty\subset B^{j}_{t_i}$. Therefore, $B_\infty\subset  B^{j}_{t_i} \cap Q^{\sharp}_{t_i} = \emptyset$ (see \cref{partition}).

\begin{claim*}
For all $\bar\e>0$ small enough, there exists a monotone mean-convex isotopy connecting $\partial Q^-_{t_i}$ with $\partial Q_{t_i+\bar \e}$.
\end{claim*}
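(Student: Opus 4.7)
The plan is to invoke Proposition \ref{ourisot} directly, since all of its hypotheses have just been verified by the preceding claims. Specifically, $Q^-_{t_i}$ decomposes according to \eqref{partition} as $Q^\sharp_{t_i}\cup (\cup_{j=1}^k N^{j}_{t_i})\cup (\cup_{j=1}^k B^{j}_{t_i})$, and the previous claim established that for each $j$ the level set flow of $B^j_{t_i}$ becomes extinct in finite time, i.e.\ $\cap_{t\geq 0} F_t(B^j_{t_i})=\emptyset$. This is precisely the hypothesis under which Proposition \ref{ourisot} produces a strictly monotone mean-convex isotopy from $\partial Q^-_{t_i}$ to $\partial Q_{t_i+\bar\e}$ for all sufficiently small $\bar\e>0$.

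The only thing I would check carefully before appealing to Proposition \ref{ourisot} is that the surgery parameter $\delta$ has been taken small enough so that the gluing-map construction in that proposition applies. Since $T$ was fixed first and the surgery parameters $(\alpha,\delta,\mathbb{H})$ can then be chosen as fine as we wish by \Cref{mcfexistence} and \Cref{theoremLauer}, we are free to shrink $\delta$ further so that the hypotheses of \Cref{firstisot} and hence of Proposition \ref{ourisot} are met at every surgery time $t_i\leq T$. Because there are only finitely many surgery times on $[0,T]$, one choice of $\delta$ works uniformly.

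Having done this, the isotopy is constructed in three stages (all contained in the proof of Proposition \ref{ourisot}): first, use \Cref{balltree} to isotope each $B^j_{t_i}$ to a marble tree via a strictly monotone mean-convex isotopy; second, use \Cref{firstisot} to realize $\partial Q^-_{t_i}$ as the boundary of a gluing $\G_{r}(Q_{t_i+\bar t}\cup (\cup_j B^j_{\bar t}),\cup_j\beta^j_{\bar t})$; third, retract the marble trees into narrow tubes using the mechanism of \cite[Section 5]{BuzanoSpheres} as in \cite[Theorem 6.1]{HaslhoferKetover}, and then push the tubes into $Q_{t}$ before $t$ reaches $t_i+\bar\e$. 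Throughout this procedure the evolution of $Q_{t}$ (for $t\geq t_i+\bar t$) is run arbitrarily slowly so that monotonicity and strict mean-convexity are preserved.

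The main obstacle I anticipate is purely bookkeeping: one must simultaneously control the slowly decreasing gluing radius $r(t)$, the slowly increasing time parameter $\bar r(t)$ governing the evolution of $Q$, and the retraction of each marble tree, while ensuring that distinct $B^j$'s (and their tubes) remain disjoint and that the whole construction terminates before $t=t_i+\bar\e$. This is exactly the content of Proposition \ref{ourisot}, so no genuinely new analytic input is needed here — the claim is a direct corollary of the previous two claims together with Proposition \ref{ourisot}.
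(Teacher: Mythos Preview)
The proposal is correct and takes essentially the same approach as the paper, which simply states ``This follows directly from \Cref{ourisot}, which we can apply because of the previous claim.'' Your additional remarks about choosing $\delta$ uniformly and the three-stage outline of the isotopy are accurate elaborations, but the paper does not spell them out here since they are already contained in the proof of \Cref{ourisot}.
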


This follows directly from \Cref{ourisot}, which we can apply because of the previous claim.

\begin{claim*}
If $\operatorname{int} K_\infty \neq \emptyset$, then $F_t(\partial Q_T)$, for $t \in [0,\infty]$, is a monotone mean-convex isotopy between $\partial Q_T$ and $\Gamma$. If $\operatorname{int} K_\infty =\emptyset$, then $F_t(\partial Q_T)$, for $t \in [0,\infty]$, is a monotone mean-convex isotopy between $\partial Q_T$ and the boundary of a tubular neigborhood of $\Gamma$.
\end{claim*}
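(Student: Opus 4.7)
The plan is to verify that, after a smooth reparametrization of time, the classical smooth mean curvature flow $t \mapsto F_t(K_T)$ satisfies the four properties of a strictly monotone mean-convex isotopy from \Cref{defisotopy}. The essential inputs are: (i) $\partial Q_T = \partial K_T$ is a smooth, connected, strictly mean-convex surface, since $T$ is a regular time of the flow with surgery and by the previous claim $K_T = Q_T$; (ii) $F_t(K_T)$ is a classical smooth mean curvature flow for all $t \geq 0$, by the construction in \Cref{theoremContinuation} applied earlier in the proof; (iii) items (1) and (6) of \Cref{theoremWhite}, which provide strict monotonicity $F_s(K_T) \subset \operatorname{int} F_t(K_T)$ for $t < s$ and smooth convergence $\partial F_t(K_T) \to \partial K_\infty$ with multiplicity $1$ or $2$; and (iv) the strong maximum principle for mean-convex flows, which guarantees that $\vec{H}$ is nowhere vanishing on $\partial F_t(K_T)$ for every finite $t \geq 0$.

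I would then observe that the same topological argument used earlier (ruling out $\mathrm{S}^2$ or $\mathbb{RP}^2$ summands of $\Gamma$ by the hypothesis $\Gamma \setminus \Gamma' = \emptyset$) forces $K_\infty$ to have a single component, namely the closure of the incompressible $\Gamma'$. In the case $\operatorname{int} K_\infty \neq \emptyset$ (which corresponds to (a1)), item (6) of \Cref{theoremWhite} gives one-sheeted smooth convergence $\partial F_t(K_T) \to \Gamma$, so there is a smooth family of diffeomorphisms $\psi_t : S \to \partial F_t(K_T)$ extending to a diffeomorphism $\psi_\infty : S \to \Gamma$. Composing with a smooth increasing reparametrization $\tau:[0,1) \to [0,\infty)$ with $\tau(s) \to \infty$ as $s \to 1$ (e.g. $\tau(s) = s/(1-s)$), and setting $\phi(p,1) = \psi_\infty(p)$, produces the desired isotopy $\phi: S \times [0,1] \to M$. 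Properties (1)--(3) of \Cref{defisotopy} hold tautologically from the construction; strict monotonicity is item (1) of \Cref{theoremWhite}; finally, since $\partial \phi/\partial t$ is a positive multiple of $\vec{H}$ in the MCF parametrization, we get $\langle \vec{H}, \partial_t \phi\rangle > 0$ for all $s \in [0,1)$, by the strong maximum principle.

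In the case $\operatorname{int} K_\infty = \emptyset$ (corresponding to (a2)), $\Gamma$ is a one-sided minimal surface and the convergence of $\partial F_t(K_T)$ is two-sheeted. For every sufficiently large $t^\ast$, the smooth surface $\partial F_{t^\ast}(K_T)$ lies in a normal tubular chart of $\Gamma$ as a two-sheeted graph over $\Gamma$, and therefore bounds an open tubular neighborhood $\mathcal{N}_{\varepsilon(t^\ast)}(\Gamma)$; since $\Gamma$ is one-sided, this boundary is diffeomorphic to the connected orientation double cover of $\Gamma$, which is precisely the diffeomorphism type of the boundary of any tubular neighborhood of $\Gamma$. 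Running the same reparametrization as in Case 1 but stopping at any such $t^\ast$ produces the isotopy from $\partial Q_T$ to $\partial \mathcal{N}_{\varepsilon(t^\ast)}(\Gamma)$, with the same verification of the four properties of \Cref{defisotopy}.

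The main subtlety is precisely this final step in Case 2: one cannot literally take $t^\ast = \infty$, because in the two-sheeted limit the parametrization $\psi_t$ collapses the double cover onto $\Gamma$, so $\phi|_{S \times \{1\}}$ would fail to be an embedding. Stopping at any finite but large $t^\ast$ (equivalently, reading ``$t \in [0,\infty]$'' as a continuous family converging to a tubular boundary) circumvents this and is the correct interpretation of the claim. Beyond this point the verification is routine and the claim completes the proof of \Cref{theoremIncompressible}.
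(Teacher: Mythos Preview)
Your proposal is correct and follows the same approach as the paper: the paper's entire proof is the single sentence ``This follows immediately from \Cref{theoremWhite},'' and your argument is a careful unpacking of exactly which items of \Cref{theoremWhite} (together with the smoothness of the flow after time $T$ from \Cref{theoremContinuation} and the earlier claim $K_T=Q_T$) are being invoked. Your observation about the subtlety in the two-sheeted case---that one must stop at a large finite time rather than literally at $t=\infty$---is a fair reading of the claim and is implicit in the paper's use of ``boundary of a tubular neighborhood of $\Gamma$'' as the endpoint.
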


This follows immediately from \Cref{theoremWhite}.

The previous claims prove the result in the separating case. When $S$ is non-separating we can make $K=\widetilde{S}^+$. In this case, we automatically have $\operatorname{int} K_\infty \supset \rho^{k}(\widetilde{\Sigma}^+)\neq \emptyset$. In particular the convergence can only happen with multiplicity one. Moreover, by \Cref{surgeryprojects} and \Cref{projectswell} the isotopies between $\partial Q_{t_i+\e}$ and $\partial Q^-_{t_{i+1}}$, and between $\partial Q_T$ and $\widetilde{\Gamma}$ project diffeomorphically to $M$. 

It remains to see that the isotopies connecting $\partial Q^-_{t_i}$ and $\partial Q_{t_i+\bar \e}$ exist and project well. For the existence it is enough to check that in fact $B_{t_i}^j \subset \M$ is a $3$-ball. This follows from \Cref{corollarycanonical}. 
\begin{claim*}
$\pi$ is injective on each $B_{t_i}^j$. Moreover, $B_{t_i}^j \subset \M$ is a $3$-ball.
\end{claim*}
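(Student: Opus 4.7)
The plan is to establish the claim in two independent parts: first the injectivity of $\pi|_{B^j_{t_i}}$, and then the $3$-ball property of $B^j_{t_i}$.

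For injectivity, I would first locate $\partial B^j_{t_i}$ inside the closed set $\overline{K^-_{t_i} \setminus K^+_{t_i}}$, on which $\pi$ is a local diffeomorphism by \Cref{surgeryprojects}. Since $B^j_{t_i}$ is a connected component of $K^\sharp_{t_i}$, we have $\partial B^j_{t_i} \subset \partial K^\sharp_{t_i}$, which splits into the surviving pieces of $\partial K^-_{t_i}$ and the new surgery caps. The surviving pieces are disjoint from $\operatorname{int} K^+_{t_i}$ because $K^+_{t_i} \subset K^-_{t_i}$; the caps lie inside the neck $3$-balls comprising $K^-_{t_i} \setminus K^\sharp_{t_i}$ (per \Cref{deftrivial} and \Cref{replacing}); so both sit in $\overline{K^-_{t_i} \setminus K^+_{t_i}}$. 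Hence $\rho^k(\partial B^j_{t_i})$ is disjoint from $\partial B^j_{t_i}$ for every nonzero integer $k$. I would conclude with a volume-comparison argument: if $B^j_{t_i}$ met $\rho^k(B^j_{t_i})$ for some $k\neq 0$, disjointness of boundaries would force one to contain the other; equal volumes (as $\rho^k$ is an isometry) would then force equality; and this contradicts the free and properly discontinuous action of $\rho^k$ on $\M$.

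For the $3$-ball property, I would leverage the extinction of the level set flow of $B^j_{t_i}$ established in the previous claim. Running the mean curvature flow with surgery from $B^j_{t_i}$ with sufficiently fine parameters, the flow terminates in finite time, so every component is eventually discarded; by \Cref{corollarycanonical} each discarded component is diffeomorphic to $D^3$ or to a solid torus. I would then argue inductively that every component throughout this flow has spherical boundary: $\partial B^j_{t_i}$ is initially a $2$-sphere, and a neck surgery (per \Cref{deftrivial}) cuts a sphere along a separating annulus and caps each side, again producing spheres. This rules out solid tori among the discarded pieces, leaving only $3$-balls. Reversing the surgeries, each step glues two $3$-balls along a neck $3$-ball (itself a $3$-ball by \Cref{deftrivial}), yielding a $3$-ball at every stage; so $B^j_{t_i}$ is a $3$-ball.

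The main obstacle is the boundary inclusion $\partial B^j_{t_i} \subset \overline{K^-_{t_i} \setminus K^+_{t_i}}$: verifying that the newly introduced standard caps actually sit inside the removed region $K^-_{t_i} \setminus K^\sharp_{t_i}$ requires carefully unpacking Definitions \Cref{strongneck}--\Cref{replacing}. Once this is in hand, the covering-space topology of $\pi$, the canonical neighborhood classification, and the elementary fact that neck surgery preserves spherical boundaries combine routinely to finish both halves of the claim.
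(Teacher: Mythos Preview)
Your proposal is correct, but the route differs from the paper's in an interesting way.

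The paper proves both assertions in one stroke via a lifting argument: once $\pi$ is seen to be injective on $\partial B^j_{t_i}$ (the paper uses the containment $\partial K^\sharp_{t_i}\subset F_{t_i}(K)\setminus F_{t_i+\e}(K)$ coming from Lauer's approximation, which is a quicker way to get this than unpacking Definitions~\ref{strongneck}--\ref{replacing}), the image $\pi(\partial B^j_{t_i})$ is an embedded $2$-sphere in $M$ bounding a $3$-ball there; lifting that simply connected ball back to $\M$ produces a compact region bounded by $\partial B^j_{t_i}$, which must equal $B^j_{t_i}$ since the complement is non-compact. Injectivity of $\pi|_{B^j_{t_i}}$ and the $3$-ball property then fall out simultaneously.

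You instead separate the two halves. For injectivity you run a direct covering-space argument in $\M$: disjointness of the boundaries under nontrivial deck translates, then a containment/volume comparison forcing $B^j_{t_i}=\rho^k(B^j_{t_i})$, contradicting proper discontinuity (note that freeness alone is not quite the right word here---what you need is that a compact set cannot be invariant under the infinite-order translation $\rho^k$). For the $3$-ball property you stay in $\M$ and use the extinction established in the previous claim together with \Cref{corollarycanonical}, inductively tracking that all boundaries remain spheres through surgeries and reassembling. This is more self-contained: it never invokes the statement that the projected sphere bounds a ball in $M$, which in the paper's argument is asserted without an explicit justification (it is not a hypothesis of \Cref{theoremIncompressible}, though it does hold in the irreducible/negatively curved setting of \Cref{theoremNegative}). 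The paper's argument, by contrast, is shorter and yields both conclusions at once when that step is available.
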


Notice that $\pi$ is injective in $\partial B_{t_i}^j \subset \partial K^\sharp_{t_i}\subset F_{t_i}(K)\setminus F_{t_i+\e}(K)$. From \Cref{incompressiblecomponents} we know that $\partial B_{t_i}^j \subset \M$ is a $2$-sphere. Therefore, $\partial B_{t_i}^j$ is a lift of the $2$-sphere $\pi(\partial B_{t_i}^j) \subset M$.  $\pi(\partial B_{t_i}^j)$ bounds a $3$-ball, which, as a simply connected set, lifts to $\M$. The lift of this $3$-ball is a region bounded by $\partial B_{t_i}^j$ in $\M$. So it is either, $B_{t_i}^j$ or $\M\setminus B_{t_i}^j$. However, since $\M$ is not compact, only the first case is possible and the claim follows.

Finally, to see that the isotopies project well, by monotonicity, it is enough to show that $\pi$ is a diffeomorphism on $Q^-_{t_i}\setminus Q_{t_i+\e}$. Notice that $$Q^-_{t_i}\setminus Q_{t_i+\e}=\Omega \cup (\cup_{j=1}^k \operatorname{int }B^{j}_{t_i}),$$
where $\Omega_0=(Q^\sharp_{t_i} \setminus Q_{t_i+\e}) \cup (\cup_{j=1}^k N^{j}_{t_i})\cup (\cup_{j=1}^k \partial B^{j}_{t_i})$ is a connected set. Since $\Omega_0\subset F_{t_i}\setminus F_{t_i+\e}$, it follows that $\pi$ is injective in $\Omega_0$. 

\begin{claim*}
Assume that $\pi$ is injective on subsets $\Omega$ and $B$ of $\M$. If $\Omega$ is connected, $B$ closed and $\partial B \subset \Omega$, then $\pi$ is injective on $B\cup \Omega$. 
\end{claim*}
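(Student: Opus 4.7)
My plan is to argue by contradiction and exploit the $\mathbb{Z}$-structure of the covering $\pi:\M\to M$, whose deck transformations are generated by $\rho$. Recall that $\pi(p)=\pi(q)$ if and only if $q=\rho^k(p)$ for some $k\in\Z$, and injectivity of $\pi$ on a set $A\subset\M$ is equivalent to the assertion that $\rho^k(A)\cap A=\emptyset$ for all $k\neq 0$.

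So suppose there exist distinct $x,y\in B\cup\Omega$ with $\pi(x)=\pi(y)$. Injectivity of $\pi$ on $\Omega$ and on $B$ forces one of the points to lie in $\Omega$ and the other in $B$, say $x\in\Omega$ and $y\in B$, and there is a non-zero $k\in\Z$ with $y=\rho^k(x)$. I would then examine the translate $\rho^k(\Omega)$, which is connected (being the continuous image of $\Omega$ by the homeomorphism $\rho^k$) and disjoint from $\Omega$ (by injectivity of $\pi$ on $\Omega$). Since $\partial B\subset\Omega$, this forces $\rho^k(\Omega)\cap\partial B=\emptyset$.

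The main step is to conclude from this that $\rho^k(\Omega)\subset B$. The point is that $B$ is closed, so $\M\setminus B$ and $\operatorname{int}(B)$ are disjoint open sets whose union is $\M\setminus\partial B$. Thus $\rho^k(\Omega)$ is partitioned into the two relatively open subsets $\rho^k(\Omega)\cap\operatorname{int}(B)$ and $\rho^k(\Omega)\cap(\M\setminus B)$; since $y=\rho^k(x)$ lies in $B$ and the first piece is non-empty (otherwise $y\in\partial B\subset\Omega$, contradicting $\rho^k(\Omega)\cap\Omega=\emptyset$), connectedness gives $\rho^k(\Omega)\subset\operatorname{int}(B)\subset B$.

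From here I would close the argument quickly: since $\partial B\subset \Omega$, applying $\rho^k$ gives $\rho^k(\partial B)\subset\rho^k(\Omega)\subset B$, but injectivity of $\pi$ on $B$ says $\rho^k(B)\cap B=\emptyset$, forcing $\partial B=\emptyset$. Then $B$ is clopen in the connected manifold $\M$, hence $B=\emptyset$ or $B=\M$; the former contradicts $y\in B$ and the latter contradicts the fact that $\pi$ is not injective on all of $\M$ (the covering $\pi$ is non-trivial since $S$ is non-separating). The only potential subtlety is verifying the dichotomy $\rho^k(\Omega)\subset\operatorname{int}(B)$ or $\rho^k(\Omega)\cap B=\emptyset$, which is a clean consequence of $B$ being closed combined with $\partial B\subset\Omega$; everything else is bookkeeping about the deck group $\langle\rho\rangle\cong\Z$.
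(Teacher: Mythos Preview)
Your argument is correct and follows essentially the same route as the paper: take a non-trivial deck transformation $\rho^k$, note $\rho^k(\Omega)\cap\partial B=\emptyset$ from injectivity on $\Omega$ and $\partial B\subset\Omega$, use connectedness of $\rho^k(\Omega)$ to force $\rho^k(\Omega)\subset\operatorname{int}B$, and then obtain $\rho^k(\partial B)\subset B$, contradicting injectivity on $B$. The only difference is that you explicitly dispatch the degenerate case $\partial B=\emptyset$, whereas the paper's proof silently uses $\partial B\neq\emptyset$ (which holds in the application since $B$ is a $3$-ball).
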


Let $\rho'$ be a non-trivial deck transformation. Since $\pi$ is injective on $\Omega$ and $\partial B\subset \Omega$, we must have $\rho'(\Omega)\cap \partial B= \emptyset$. If $\rho'(\Omega)\cap B\neq \emptyset$, then $\rho'(\Omega)\subset \operatorname{int} B$, because $\rho'(\Omega)$ is connected. But this implies that $\rho'(\partial B)\cap B\neq \emptyset$ which contradicts our assumption of $\pi$ being injective on $B$.

We obtain that $\pi$ is injective on $Q^-_{t_i}\setminus Q_{t_i+\e}$ by applying the claim inductively. First to $\Omega_0$ and $B_{t_i}^1$ to conlude that $\pi$ is injective on $\Omega_1=\Omega_0 \cup B_{t_i}^1$. Then to $\Omega_1$ and $B_{t_i}^1$, and so on. This concludes the proof of the theorem.

\end{proof}

\subsection{Proof of \Cref{theoremNegative}}

\begin{proof}
As discussed in the first section, negatively curved 3-manifolds do not admit minimal 2-spheres or $\mathbb{RP}^2$. In the notation of \Cref{theoremIncompressible} this implies $\Gamma\setminus \Gamma'=\emptyset$. Therefore, in order to apply the last statement of \Cref{theoremIncompressible} we just need to check (a1), i.e. $\Gamma$ is obtained with multiplicity 1. 

First consider the case of $S$ non-separating and $M$ closed. Following the notation of \Cref{remarklift}, denote $\{K_t,0\leq t\leq T\}$ the flow starting at $K_0=\widetilde{S}^+$ given by \Cref{theoremContinuation}. Since $\Gamma\setminus \Gamma'=\emptyset$, from \Cref{incompressiblecomponents} the set $K_T$ consist of exactly one incompressible component $Q_T$. Since $\rho^{k}(\widetilde{\Sigma}^+)\subset Q_T$ (see \Cref{minimalbarriers}), it follows that $\operatorname{int} K_\infty =\operatorname{int} \cap_{t\geq 0}F_t(Q_T)\neq \emptyset$. From \Cref{theoremWhite}, $\widetilde{\Gamma}$ is is obtained with multiplicity one. Since $\widetilde{\Gamma}$ is connected it projects diffeomorphically to $\Gamma$, which is then obtained with multiplicity 1. 

Now, we assume that $S\subset \partial K$, $S$ is mean-convex and $\partial K\setminus S$ is a non-empty union of mean-convex and minimal surfaces. By the mean-convexity and minimality of the boundary we have $F_t(K)\subset K$, for all $t\geq 0$. Since we are only interested in the evolution of $S$, we can, without loss of generality, modify $K$ to ``close'' all of the other boundary components of $\partial K$, by attaching caps on them. Repeating the previous argument, after finitely many surgeries we are only left with exactly one boundary component which is incompressible. As before, the interior of the limit region is not empty. In this case, this is because the flow of $S$ can never reach $\partial K\setminus S$. This imply the convergence happens with multiplicity 1.

\end{proof}

\section{Foliations in hyperbolic $3$-manifolds}

We begin by recalling the following results due to Anderson.

\begin{theorem}\label{theoremAnderson1} \cite[Theorem 5.5]{Anderson} Let $N^3$ be a compact oriented 3-manifold with an analytic Riemannian metric. Then, either 
\begin{enumerate}
\item $N^3$ contains only finitely many compact stable, oriented, minimal surfaces of uniformly bounded area, or
\item $N^3$ fibres over $S^1$ with fibres smooth compact minimal surfaces
\end{enumerate}
\end{theorem}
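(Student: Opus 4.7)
The plan is to combine stable-minimal-surface compactness with an analytic-continuation argument for Jacobi fields. First, I would invoke Schoen's curvature estimate for stable two-sided minimal surfaces in a Riemannian $3$-manifold: together with the uniform area bound, this yields a uniform pointwise estimate on the second fundamental form across the entire family. Standard smooth compactness then extracts from any infinite collection of distinct such surfaces a subsequence $\Sigma_i \to \Sigma$ converging smoothly, possibly with finite integer multiplicity, to a smooth compact stable minimal surface $\Sigma$.

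The second step is to produce a nowhere-vanishing Jacobi field on $\Sigma$. In the multiplicity-one case, write the nearby $\Sigma_i$ as normal graphs $u_i$ over $\Sigma$ and pass to a limit of $u_i/\|u_i\|_{C^1}$; in the higher-multiplicity case, take the normalised difference of two graphical sheets of the convergence. The limit $\phi$ solves the Jacobi equation $L\phi=0$. Since $\Sigma$ is stable, the first eigenvalue of $-L$ is nonnegative, so $\phi$ lies in the zero-eigenspace, which by the strong maximum principle is one-dimensional and spanned by a function of constant sign; in particular $\phi$ can be arranged to be strictly positive on $\Sigma$.

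Now I would use analyticity of the metric. In an analytic metric the minimal surface equation is real-analytic, so the positive Jacobi field $\phi$ integrates, via the analytic implicit function theorem, to a real-analytic one-parameter family $\Sigma_s$ of embedded minimal surfaces with $\Sigma_0=\Sigma$ and initial normal velocity $\phi\,\nu$. Because $\phi>0$, the surfaces $\Sigma_s$ locally foliate a two-sided tubular neighbourhood of $\Sigma$. The task is then to extend the family analytically as far as possible: by the uniform curvature and area estimates from Step~1 no leaf can become singular or escape compactness, by the strong maximum principle distinct leaves remain disjoint, and by analyticity the family cannot branch. Compactness of $N$ then forces the leaves to sweep out all of $N$ and to close up into a smooth fibration over $\mathrm{S}^1$ whose fibres are minimal surfaces, giving alternative (2).

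The main obstacle I anticipate is exactly this global step: showing that the local analytic family genuinely extends to a foliation of $N$ and closes into a fibration, rather than accumulating on a singular set, self-intersecting, or failing to return. The interior curvature bounds keep the family precompact in the smooth topology, the maximum principle keeps the leaves disjoint, and analyticity prevents sudden branching; these three inputs, which are the essential content of the analytic-metric hypothesis, are what combine to rule out the first alternative and deliver the fibration.
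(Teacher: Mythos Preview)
The paper does not prove this theorem; it is quoted from Anderson with only the remark that Schoen's curvature estimates allow one to replace Anderson's original hypothesis ($R$-locally area-minimizing) by stability. So there is no in-paper proof to compare against, and your outline is broadly the standard route.

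That said, there is a gap in your third step. You write that the positive Jacobi field $\phi$ ``integrates, via the analytic implicit function theorem, to a real-analytic one-parameter family $\Sigma_s$ of embedded minimal surfaces.'' This is not what the analytic implicit function theorem gives you: a Jacobi field is only an infinitesimal deformation, and even in the analytic category it need not integrate to an honest curve of minimal surfaces, since there can be higher-order obstructions. What actually produces the family is analyticity \emph{together with} the accumulation of the $\Sigma_i$. After Lyapunov--Schmidt reduction, minimal surfaces near $\Sigma$ correspond to zeros of a real-analytic map $F:\ker L\to\operatorname{coker}L$. Stability forces $\ker L$ to be one-dimensional (spanned by $\phi$), and self-adjointness of $L$ makes $\operatorname{coker}L$ one-dimensional too, so $F$ is a real-analytic function of one real variable with $F(0)=0$. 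The infinitely many distinct $\Sigma_i$ give infinitely many zeros of $F$ accumulating at $0$, so $F\equiv 0$ by analyticity; \emph{that} is what delivers the one-parameter family. Once the family exists, your remaining steps (positivity of the normal velocity from stability, disjointness of leaves via the maximum principle, precompactness from the uniform estimates, and closing up over $S^1$) are on the right track, with the global closing step still requiring the care you acknowledge.
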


\begin{theorem}\label{theoremAnderson2} \cite[Corollary 5.6]{Anderson}
A quasi-Fuchsian manifold contains at most finitely many closed stable minimal surfaces of a fixed genus.
\end{theorem}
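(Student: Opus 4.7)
The plan is to reduce the non-compact quasi-Fuchsian setting to the compact analytic setting of Theorem \ref{theoremAnderson1}, by showing that all closed stable minimal surfaces of bounded genus are confined to a fixed compact region with uniform geometric bounds.

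First, I would confine the surfaces to the convex core. For a quasi-Fuchsian manifold $M = S \times \R$, the convex core $C(M)$ is compact, and each end of $M \setminus C(M)$ is foliated by strictly convex equidistant surfaces. The maximum principle then forces every closed minimal surface $\Sigma \subset M$ to lie inside $C(M)$: otherwise, sliding an equidistant leaf from infinity until it first touches $\Sigma$ yields a contact point where $\Sigma$ would have to be convex in the wrong direction. Next, I would establish uniform bounds on any closed stable minimal surface $\Sigma$ of genus $g$. Combining the Gauss equation $K_\Sigma = -1 - \tfrac{1}{2}|A|^2$ with the Gauss--Bonnet theorem produces the area bound
\[ \operatorname{Area}(\Sigma) \;\leq\; 4\pi(g-1), \]
while Schoen's curvature estimate for stable minimal surfaces in a $3$-manifold (applied in the compact region $C(M)$) gives a uniform pointwise bound on $|A|$ depending only on the distance to $\partial C(M)$ and the ambient geometry, and hence uniform $C^k$ estimates.

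Now assume for contradiction that there exists an infinite sequence $\{\Sigma_i\}$ of pairwise distinct closed stable minimal surfaces of genus $g$ in $M$. By the confinement and the uniform geometric estimates, a subsequence converges smoothly (with integer multiplicity) to a closed stable minimal surface $\Sigma_\infty \subset C(M)$. If the convergence is one-sheeted, then $\Sigma_\infty$ is necessarily degenerate stable, so the Jacobi operator has nontrivial kernel; if it is multi-sheeted, a standard rescaling argument produces a nontrivial Jacobi field on $\Sigma_\infty$ as well. In either case, using that the hyperbolic metric is real-analytic, one can integrate this Jacobi field to produce a local foliation of a neighborhood of $\Sigma_\infty$ by closed minimal surfaces.

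Finally, I would run the analytic continuation argument of \cite{Anderson}: the locus in $M$ foliated by closed minimal surfaces in the isotopy class of $\Sigma_\infty$ is open, closed, and $\rho$-invariant (where $\rho$ is a deck transformation in the lift, or in the ambient sense, the family extends maximally). This either fills all of $M$, producing a fibration of $M$ over $S^1$ with minimal fibers, or exits $C(M)$, both of which are impossible in a quasi-Fuchsian manifold (the first because quasi-Fuchsian $3$-manifolds do not fiber over $S^1$ with compact fibers, the second because there are no closed minimal surfaces outside $C(M)$). This contradiction gives the desired finiteness. The main obstacle is the passage from the smooth limit $\Sigma_\infty$ to an actual analytic family of nearby minimal surfaces: one has to combine the degeneracy of stability with analyticity of the minimal surface equation to produce the local foliation, and then verify the global continuation works inside the compact set $C(M)$ without encountering a genuine singularity.
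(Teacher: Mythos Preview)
The paper does not give its own proof of this statement: it is quoted directly as \cite[Corollary 5.6]{Anderson}, and the only additional content is the remark that Anderson's hypothesis of ``$R$-locally area-minimizing'' can be replaced by ``stable'' because the proofs in \cite{Anderson} only use curvature estimates, which Schoen \cite{Schoen} provides for stable surfaces. So there is nothing to compare at the level of an actual argument; your proposal is essentially an attempt to reconstruct Anderson's proof with exactly the modification the paper's remark indicates, and in outline it is the right reconstruction: confinement to the convex core, uniform area via Gauss--Bonnet, uniform $|A|$ via Schoen, smooth subsequential limits, and then an analyticity/continuation argument to rule out accumulation.

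One point in your sketch is not quite right as a mechanism. You write that from a nontrivial Jacobi field on $\Sigma_\infty$ ``one can integrate this Jacobi field to produce a local foliation \dots\ by closed minimal surfaces.'' A Jacobi field is only an infinitesimal deformation; in a smooth (non-analytic) metric it need not integrate to any actual family of minimal surfaces. The correct step, and the one Anderson uses, is that in a real-analytic metric the space of nearby closed minimal surfaces is a real-analytic variety; the accumulating sequence $\Sigma_i \to \Sigma_\infty$ shows this variety is not a single point, hence contains a genuine one-parameter analytic family through $\Sigma_\infty$. From there your continuation argument is the right idea, though the phrase ``$\rho$-invariant'' is out of place here (there is no deck transformation in play); the maximality of the analytic family and the maximum principle are what force the foliation to fill the convex core and produce the contradiction. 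You correctly flag this passage as the main obstacle, but the mechanism should be stated as a structure theorem for analytic varieties rather than integration of a Jacobi field.
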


\begin{remark}
In \cite{Anderson}, \Cref{theoremAnderson1} and \Cref{theoremAnderson2} are stated for a class of surfaces called $R$-locally area-minimizing. However, inspection of the proofs reveals that the crucial fact is having curvature estimates on the class of surfaces under consideration. This is the case for stable minimal surfaces by the work of Schoen \cite{Schoen}.
\end{remark}

The following lemma is standard and can be proved as in \cite[Theorem 2.3]{AnderssonCaiGalloway}. 

\begin{lemma}
Let $S \subset M$ be a stable minimal surface. Then, there exists a diffeomorphism $\phi: S\times [-\delta,\delta]\to M$ such that:
\begin{enumerate} 
\item $\phi|_{S\times\{0\}}$ is the identity, and 
\item $\phi(S\times\{t\})$ is either a minimal surface or has non-vanishing mean curvature.
\end{enumerate}
\end{lemma}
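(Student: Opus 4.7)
The plan is to realize the leaves as normal graphs over $S$ and solve a prescribed mean curvature equation via the implicit function theorem. Since $S$ is codimension one and stable, it is two-sided, admitting a smooth unit normal $\nu$, and the normal exponential map $\phi_0(p, t) = \exp_p(t \nu(p))$ is a diffeomorphism from $S \times (-\delta_0, \delta_0)$ onto a tubular neighborhood of $S$. For $u \in C^{2,\alpha}(S)$ of small norm, parametrize nearby surfaces as graphs $\Sigma_u = \{\phi_0(p, u(p)) : p \in S\}$; then $u \mapsto H(u)$ is a quasilinear elliptic operator with $H(0) = 0$ whose linearization at $u = 0$ is, up to sign, the Jacobi operator $L = \Delta_S + |A|^2 + \mathrm{Ric}(\nu, \nu)$. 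Stability means the first eigenvalue $\lambda_1$ of $-L$ is nonnegative, with strictly positive first eigenfunction $\phi_1$ by the Perron--Frobenius principle.

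In the strictly stable case $\lambda_1 > 0$, the operator $L$ is an isomorphism $C^{2,\alpha}(S) \to C^{\alpha}(S)$, so the implicit function theorem applied to $H(u) = c$ yields a smooth branch $c \mapsto u(c)$ with $u(0) = 0$ and $H(u(c)) \equiv c$. Its derivative at $c = 0$ equals, up to sign, $(-L)^{-1}(\mathbf{1})$, which is strictly positive on $S$ because the Green's function of the positive operator $-L$ is positive and bounded away from zero. Hence $c \mapsto u(c)$ is strictly monotone in $c$, and $\phi(p, t) := \phi_0(p, u(t)(p))$ (after a sign-adjusted reparametrization) realizes the desired diffeomorphism of $S \times (-\delta, \delta)$, with each leaf at time $t \neq 0$ of constant nonzero mean curvature and the central leaf equal to $S$.

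The delicate case, and the main obstacle, is the degenerate one $\lambda_1 = 0$: here $L$ has one-dimensional kernel $\mathrm{span}(\phi_1)$ and the straightforward IFT breaks down. The plan is a Lyapunov--Schmidt reduction: write $u = s\phi_1 + v$ with $v \perp \phi_1$ in $L^2$, solve the projection of $H(u) = c$ onto $\mathrm{range}(L) = (\ker L)^\perp$ for $v = v(s, c)$ by IFT on the orthogonal complement, then analyze the remaining scalar bifurcation equation $\int_S H(s\phi_1 + v(s, c)) \, \phi_1 \, dA = c \int_S \phi_1 \, dA$. Its higher-order expansion produces either a CMC branch on which the mean curvature varies monotonically in $s$, or an interval's worth of nearby minimal leaves that can be patched together with CMC neighbors on either side. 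In either subcase the lemma's conclusion is compatible, since its conclusion permits each individual leaf to be either minimal or to have nowhere-vanishing mean curvature. The essential subtlety is verifying that this combined family assembles into a single smooth monotone diffeomorphism of $S \times (-\delta, \delta)$; analyticity of the metric (as in the hyperbolic applications of later sections) supplies this via {\L}ojasiewicz-type control on the zero set of the bifurcation function.
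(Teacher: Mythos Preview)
The paper does not supply its own proof but cites \cite[Theorem~2.3]{AnderssonCaiGalloway} as the standard reference; the argument there, like yours, proceeds via the implicit function theorem applied to the mean curvature operator on normal graphs, so your overall strategy matches.

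A minor point: ``codimension one and stable'' does not imply two-sided in general (one-sided area-minimizing surfaces exist). Two-sidedness is an implicit hypothesis here, since otherwise no product tubular neighborhood exists.

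More substantively, you make the degenerate case $\lambda_1=0$ harder than it is. Your own Lyapunov--Schmidt setup already finishes the argument with no need for analyticity, {\L}ojasiewicz control, or any patching. In the scalar bifurcation equation
\[
g(s,c):=\int_S H\bigl(s\phi_1+v(s,c)\bigr)\,\phi_1\,dA \;-\; c\int_S\phi_1\,dA \;=\; 0
\]
one has $\partial_c g(0,0)=-\int_S\phi_1\neq 0$, so the implicit function theorem solves $c=c(s)$ smoothly near $s=0$. The resulting family $u(s)=s\phi_1+v(s,c(s))$ gives a smooth diffeomorphism $\phi(p,s)=\exp_p\bigl(u(s)(p)\,\nu(p)\bigr)$ onto a tubular neighborhood, because $\partial_s u(0)=\phi_1>0$ (differentiating the range-projected equation at the origin shows $\partial_s v(0,0)=0$). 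Each leaf $\phi(S\times\{s\})$ then has \emph{constant} mean curvature $c(s)$: if $c(s)=0$ the leaf is minimal, and if $c(s)\neq 0$ its mean curvature is nowhere zero. That is precisely the lemma's dichotomy. Whether $s\mapsto c(s)$ is monotone, oscillatory, or identically zero is irrelevant to the statement, and nothing needs to be assembled or patched.
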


\begin{definition}\label{defcontracting}
Assume that $\phi(S\times\{t\})$ has non-vanishing mean curvature, for all $0<|t|<\delta$. We say that the closed halved tubular neigborhood $\phi(S\times[0,\delta])$ (resp. $\phi(S\times[-\delta,0])$)  is
\begin{enumerate}
\item \underline{contracting}: if the mean curvature vector of $\phi(S\times\{t\})$ points \underline{towards} $S$, for all $0<t\leq\delta$ (resp.  for all $ -\delta\leq t<0$). 
\item \underline{expanding}: if the mean curvature vector of $\phi(S\times\{t\})$ points \underline{away from} $S$, for all $0<t\leq\delta$ (resp.  for all $ -\delta\leq t<0$). 
\end{enumerate}
We say that the tubular neighborhood $\phi(S\times[-\delta,\delta])$ is \underline{contracting},  \underline{expanding} or \underline{mixed}, if the halves are contracting, expanding or of different types, respectively. 
\end{definition}

\begin{proposition}\label{index1}
Let $C$ be a compact cylindrical region contained in the interior of a $3$-manifold $(M,g)$ with negative sectional curvature, i.e. $C\simeq S\times [0,1]$, where $S$ is a closed surface of genus at least $2$, and $C\subset \operatorname{int} M$. Assume that
\begin{enumerate}
\item there are no stable minimal surfaces in the isotopy class of $S\times \{t\}$ contained in the interior of $C$, and
\item $S_{0}=S\times \{0\}$ and  $S_1=S\times \{1\}$ are stable minimal surfaces with contracting halved tubular neighborhoods contained in $C$.
\end{enumerate}
Then,  the interior of $C$ contains a minimal surface of Morse index 1 in the isotopy class of $S\times\{t\}$.
\end{proposition}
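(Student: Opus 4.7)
The plan is to obtain the index~1 minimal surface via a one-parameter mountain-pass / min-max construction on the cobordism $C$. The contracting halved tubular neighborhoods of $S_0$ and $S_1$ play the role of making these two stable leaves into strict local area minima, producing a nontrivial mountain-pass geometry.

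\textbf{Step 1 (Sweepout class and width).} Let $\Pi$ be the Almgren--Pitts/Simon--Smith saturation of the class of continuous sweepouts $\{\Sigma_t\}_{t\in[0,1]}$ of $C$ with $\Sigma_0 = S_0$, $\Sigma_1 = S_1$, and each $\Sigma_t$ isotopic in $C$ to $S\times\{t\}$. The product structure $C\simeq S\times[0,1]$ provides a canonical such sweepout, so $\Pi\neq\emptyset$. Define the width
\[
W \;=\; \inf_{\{\Sigma_t\}\in\Pi}\,\sup_{t\in[0,1]}\mathcal{H}^2(\Sigma_t).
\]

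\textbf{Step 2 (Strict mountain-pass inequality).} I would show
\[
W \;>\; \max\{\mathcal{H}^2(S_0),\mathcal{H}^2(S_1)\}.
\]
The contracting condition, combined with the first variation formula, implies that moving $S_0$ or $S_1$ inward along the halved tubular neighborhood strictly increases area; together with stability this makes $S_0$ and $S_1$ strict local minima of area in their isotopy class in $C^1$-neighborhoods. A standard mountain-pass argument (the two local minima lie in topologically separated basins, as any sweepout must exit a neighborhood of $S_0$ before entering one of $S_1$) then yields strict inequality. Equivalently, one can slightly push $S_0,S_1$ into $\operatorname{int}(C)$ to obtain a slightly smaller strictly mean-convex cobordism $C'\subset C$ whose boundary has area strictly larger than that of $S_0, S_1$, and work there.

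\textbf{Step 3 (Min-max realization with genus and index bounds).} Apply the Simon--Smith min-max theorem (with the genus refinement of Ketover and the index upper bound for one-parameter families in the spirit of Marques--Neves) to a pulled-tight minimizing sequence for $W$. This produces a smooth embedded minimal varifold
\[
V \;=\; \sum_i n_i\,\Sigma^{(i)}\subset \bar C,\qquad \|V\| = W,
\]
with $\sum_i g(\Sigma^{(i)}) \leq g(S)$ and Morse index at most $1$ on the unstable part.

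\textbf{Step 4 (Identifying $\Sigma$).} Negative sectional curvature rules out spherical or $\mathbb{RP}^2$ components. By Sharp's multiplicity/stability dichotomy, any component with $n_i\geq 2$ is stable, so by assumption (1) such a component would have to coincide with $S_0$ or $S_1$. The strict inequality from Step~2 prevents $V$ from being supported entirely on $\partial C$, and the contracting barrier (together with the strong maximum principle) prevents any interior minimal surface from touching $S_0$ or $S_1$. Hence $V$ contains some multiplicity-$1$ component $\Sigma\subset\operatorname{int}(C)$ of index $\leq 1$. By assumption (1), $\Sigma$ is not stable, so its index equals $1$. Finally, $\Sigma$ is incompressible (a compressing disk in the cylinder $C$ would allow reduction below genus $g(S)$, and the only incompressible closed surfaces in $S\times[0,1]$ are isotopic to the fiber), so the genus bound forces $\Sigma$ to be isotopic to $S\times\{t\}$.

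The main obstacle is Step~4: ruling out the degenerate possibilities (high-multiplicity boundary contributions, loss of the isotopy class to a lower genus, or the limit concentrating on $\partial C$) requires carefully combining the strict mountain-pass inequality with the barrier properties of $S_0, S_1$ and the topological classification of incompressible surfaces in $S\times[0,1]$. The genus bound in the Simon--Smith/Ketover min-max and Sharp's rigidity for multiplicity are the crucial inputs that make this identification clean.
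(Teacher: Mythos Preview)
Your overall strategy---one-parameter min-max between the two stable boundary leaves---is the same as the paper's. The execution differs in one key technical choice: the paper does \emph{not} work directly in $(M,g)$. Instead it first approximates $g$ by bumpy metrics $g_n$ (via a perturbation of Irie--Marques--Neves combined with White's bumpy metric theorem) in which $S_0,S_1$ are \emph{strictly} stable, and then applies the local min-max theorem of Ketover--Liokumovich--Song, which requires strict stability at the ends to produce $\Sigma_n$ with $\operatorname{Area}(\Sigma_n)$ strictly above both boundary areas. The paper also builds its sweepouts from the monotone mean-convex isotopies of \Cref{theoremNegative} (the main analytic tool of the paper) rather than from the product foliation. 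After obtaining $\Sigma_n$ in $(C,g_n)$---with multiplicity one and the correct isotopy class coming from Ketover's genus control plus the absence of non-orientable closed surfaces in $S\times[0,1]$---the paper passes to the limit using Sharp's compactness, where higher multiplicity would force stability and contradict hypothesis~(1).

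Two places in your write-up need repair. In Step~2, ``stability together with contracting'' does \emph{not} make a degenerate stable $S_0$ a strict $C^1$-local minimum of area: the second variation may vanish. Your fallback---passing to the slightly smaller cylinder $C'$ with strictly mean-convex boundary---is the correct fix, and is the direct analogue of what the bumpy approximation achieves; once you are in $C'$ the maximum principle forces the min-max surface into $\operatorname{int}C'\subset\operatorname{int}C$ without needing the strict width inequality at all. In Step~4, ``Sharp's multiplicity/stability dichotomy'' is miscited: Sharp's theorem governs limits of \emph{sequences} of minimal surfaces with bounded index, not the varifold output of a single min-max. The multiplicity-one conclusion instead comes (as in the paper) from Ketover's genus bound $\sum_i n_i\, g(\Sigma^{(i)})\le g(S)$, the absence of minimal spheres in negative curvature, and the fact that every closed incompressible surface in $S\times[0,1]$ is isotopic to the fiber---together these force a single component of genus $g(S)$ with $n_i=1$.
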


\begin{proof}
Without loss of generality, we can assume that, for some $\delta>0$ small, $S\times [0,\delta]$ and $S\times [1-\delta,1]$ correspond to the halved tubular neighborhoods of $S_0$ and $S_1$, that are contained in $C$ and are contracting with respect to $g$.

Let $(M,g_n)$ be a sequence of bumpy metrics converging to $(M,g)$ in the smooth topology and such that $S_0$ and $S_1$ are strictly stable minimal surfaces in $(M,g_n)$, for all $n\in \N$. The existence of this sequence follows from the combination of \cite[Proposition 2.3]{IrieMarquesNeves} (see formula for spectrum at the end of the proof of \cite[Proposition 2.3]{IrieMarquesNeves}) and the smooth bumpy metric theorem from \cite{WhiteBumpy2}. Additionally, choosing $n$ large enough, we can assume that $(M,g_n)$ has negative sectional curvature and the mean curvature vectors of $S_\delta=S\times \{\delta\}$ and $S_{1-\delta}=S\times \{1-\delta\}$, point towards $S_0$ and $S_1$ respectively. 

For the definition of \textit{sweepout} and \textit{saturared family} we refer the reader to \cite[Section 2]{MarquesNevesDuke}. We can form a canonical sweepout in the following way. First, by \Cref{theoremNegative}, applied to $S_\delta$ in the metric $(M,g_n)$, there is a monotone isotopy from $S_\delta$ to a stable minimal surface $\Gamma_0 \subset S\times [0,\delta)$. This isotopy foliates the region enclosed between $\Gamma_0$ and $S_\delta$ by surfaces that are $g_n$-mean-convex and have area strictly less than $\operatorname{Area}(S_\delta,g_n)$ (see \Cref{remarkArea}). Considering this isotopy backwards in time, we have a sweepout connecting $\Gamma_0$ with $S_\delta$. Next, we extend it to a sweepout connecting $\Gamma_0$ and $S_{1-\delta}$ by simply following the surfaces $S_t=S\times\{t\}$, for $\delta\leq t\leq 1-\delta$. Finally, we can use \Cref{theoremNegative} once more. This time we apply it to the surface $S_{1-\delta}$, to produce an isotopy that connects it with a stable minimal surface $\Gamma_1$. This produces a foliation of the region between $S_{1-\delta}$ and $\Gamma_1$, by surfaces that are $g_n$-mean-convex and have area strictly less than $\operatorname{Area}(S_{1-\delta},g_n)$. Putting all the pieces together we obtain a sweepout of the region between $\Gamma_0$ and $\Gamma_1$ by surfaces with area bounded from above by $\max_{t\in[\delta,1-\delta]} \operatorname{Area}(S_{t},g_n)$.

The sweepout we just constructed consists of surfaces isotopic to the fiber $S\times\{t\}$. Therefore, all the sweepouts in the saturated family generated by it also consists of one parameter families of surfaces isotopic to $S\times\{t\}$. Since $\Gamma_0$ and $\Gamma_1$ are strictly stable minimal surfaces in the metric $g_n$, we can apply the local min-max theorem \cite[Theorem 10]{KetoverLiokumovichSong} to this saturated family. In this way, we obtain a minimal surface $\Sigma_n$ of index at most 1 and such that $$\max\{\operatorname{Area}(\Gamma_0,g_n),\operatorname{Area}(\Gamma_1,g_n)\} < \operatorname{Area}(\Sigma_n,g_n)\leq  \max_{t\in[\delta,1-\delta]} \operatorname{Area}(S_{t},g_n).$$
The topology of $\Sigma_n$ is determined by the results from \cite{Ketover}. In particular, there is a sequence of surfaces obtained by performing neck-surgeries  and discarding components on leaves of the sweepouts in the family. Each component of the sequence converges to a component of $\Sigma_n$ as a one-sheeted graph if the component of $\Sigma_n$ is orientable, or as a two-sheeted graph otherwise. It is a classical result that there are no non-orientable closed surfaces embedded in $S\times[-1,1]$ (see \cite[Corollary 4.5]{BredonWood}). Moreover, since $S\times \{t\}$ is incompressible and $S\times[-1,1]$ is irreducible, neck surgeries always leave a component isotopic to $S\times \{t\}$ and possibly many $2$-spheres (\Cref{deftrivial}). Since $(M,g_n)$ has negative curvature there are no minimal spheres. In particular, there care only be one incompressible component isotopic to $S\times \{t\}$ converging to $\Sigma_n$. We conclude that $\Sigma_n$ is obtained with multiplicity one and it is isotopic to $S\times \{t\}$.

Moreover, $\Sigma_n\cap S\times [\delta,1-\delta]\neq\emptyset$. Otherwise, $\Sigma_n$ would be contained in one the regions enclosed by $S_i$ and $\Gamma_i$, for $i=0$ or $1$. Since the regions are foliated by $g_n$-mean-convex surfaces, this would imply that $\Sigma_n=\Gamma_i$,  for $i=0$ or $1$. Contradicting the first inequality.

Finally, by Ben Sharp's compactness theorem \cite[Theorem A.6.]{Sharp}, $\Sigma_n$ converges to a $g$-minimal surface $\Sigma$ of index at most 1. Since $\Sigma\cap S\times [\delta,1-\delta]\neq\emptyset$, it follows from the maximum principle that $\Sigma$ cannot intersect $\partial C$, which is minimal. Therefore $\Sigma\subset \operatorname{int} C$. Since there are none one-sided closed surfaces in $S\times [-1,1]$, $\Sigma$ has to be two-sided. If the convergence $\Sigma_n\to \Sigma$ happens with multiplicity greater than 1 then \cite[Theorem A.6.]{Sharp} would imply that $\Sigma$ is stable. This would contradict our assumptions. Therefore, $\Sigma_n\to \Sigma$ is a graphical convergence, so $\Sigma$ is in the isotopy class of $S\times\{t\}$. Finally, since there are not stable minimal surfaces contained in $\operatorname{int} C$, it follows that $\Sigma$ must have index 1. \end{proof}

\begin{proposition}\label{folcyl}
Let $C$ be a compact cylindrical region contained in the interior of a $3$-manifold $M$ with negative sectional curvature, i.e. $C\simeq S\times [0,1]$, where $S$ is a closed surface of genus at least $2$, and $C\subset \operatorname{int} M$. Assume that
\begin{enumerate}
\item there are only finitely many stable minimal surfaces in the isotopy class of $S\times \{t\}$ contained in the interior of $C$, and
\item $S_{0}=S\times \{0\}$ and  $S_1=S\times \{1\}$ are stable minimal surfaces with contracting halved tubular neighborhoods contained in $C$.
\end{enumerate}
Then, $C$ is foliated by surfaces in the isotopy class of $S\times\{t\}$, the foliation starts at $S_0$ and ends at $S_1$, and each leaf is either minimal or have non-vanishing mean curvature. 
\end{proposition}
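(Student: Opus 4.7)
The plan is to partition $C$ along its finitely many stable minimal surfaces isotopic to the fiber and foliate each resulting subcylinder by combining \Cref{index1} with \Cref{theoremNegative}. By the maximum principle in negative sectional curvature, two disjoint minimal surfaces in the fiber isotopy class are linearly ordered along the transverse direction of $C \simeq S \times [0,1]$. Listing the stable ones together with the boundary as $\Sigma_0 = S_0 < \Sigma_1 < \cdots < \Sigma_N < \Sigma_{N+1} = S_1$, I set $C_j$ to be the subcylinder bounded by $\Sigma_j$ and $\Sigma_{j+1}$. By construction $\operatorname{int}(C_j)$ contains no stable minimal surface in the fiber isotopy class, so it suffices to foliate each $C_j$ individually and splice the pieces along the common boundaries.

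For each $C_j$ I case-split on the halved tubular neighborhoods of $\Sigma_j$ and $\Sigma_{j+1}$ lying in $C_j$. If both are contracting, the hypotheses of \Cref{index1} are satisfied and I obtain an index-$1$ minimal surface $\Sigma^\ast_j \subset \operatorname{int}(C_j)$ isotopic to the fiber. Inspecting the min-max proof of \Cref{index1}, $\Sigma^\ast_j$ arises as the max-area slice of a sweepout whose immediately adjacent slices have strictly smaller area; these two slices furnish normal perturbations of $\Sigma^\ast_j$ on opposite sides whose mean curvature vectors point toward $\Sigma_j$ and $\Sigma_{j+1}$ respectively. Applying \Cref{theoremNegative}(2) to each perturbation in the appropriate half of $C_j$ --- whose other boundary is the minimal surface $\Sigma_{j\pm 1}$ --- yields strictly monotone mean-convex isotopies converging to stable minimal surfaces in each half. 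By the absence of such in $\operatorname{int}(C_j)$ the limits must be $\Sigma_j$ and $\Sigma_{j+1}$, and the two isotopies together with the leaf $\Sigma^\ast_j$ foliate $C_j$.

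If not both halved neighborhoods are contracting, the mean curvature vectors of leaves near the two boundaries align along the transverse coordinate of $C_j$ --- directly in the one-expanding--one-contracting case, and after inserting intermediate unstable minimal surfaces (forced by sign analysis of the mean curvature along a smooth family $S \times \{t\}$, whose halved neighborhoods are both contracting by construction) together with their tubular neighborhood foliations from the lemma preceding \Cref{defcontracting} in the both-expanding case. A single application of \Cref{theoremNegative}(2) from a mean-convex perturbation of the expanding-side boundary then produces a monotone foliation converging to the opposite boundary in each aligned subsector. The local foliations on adjacent $C_j$'s match smoothly at each $\Sigma_j$ since each one-sided piece extends to a smooth mean curvature evolution up to its boundary minimal surface, so they assemble into a smooth foliation of $C$ by surfaces in the fiber isotopy class, each either minimal or of non-vanishing mean curvature.

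The crux of the argument is the both-contracting case, where I must verify from the min-max proof of \Cref{index1} that $\Sigma^\ast_j$ admits two-sided mean-convex perturbations in opposite normal directions suitable to initialize \Cref{theoremNegative}(2) on each half of $C_j$; this rests on the strict area inequality $\max\{\operatorname{Area}(\Gamma_0, g_n), \operatorname{Area}(\Gamma_1, g_n)\} < \operatorname{Area}(\Sigma_n, g_n)$ established in that proof, combined with the positivity of the first Jacobi eigenfunction of $\Sigma^\ast_j$. A secondary subtlety is the both-expanding configuration, which I expect to reduce via approximation by bumpy metrics (where strict stability forces both halved neighborhoods of every stable $\Sigma_j$ to be contracting, ruling the case out) and a passage to the limit using Sharp's compactness theorem as in the proof of \Cref{index1}.
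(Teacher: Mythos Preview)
Your overall architecture matches the paper's: partition $C$ by a maximal ordered family of stable minimal fibers and foliate each $C_j$ separately. The two points where you diverge both introduce unnecessary complications, and one of them contains a genuine gap.

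\textbf{The both-contracting case.} Your mechanism for obtaining mean-convex perturbations of the index-$1$ surface $\Sigma^\ast_j$ is not correct. The min-max surface is \emph{not} a slice of any sweepout in the saturated family; it arises as a varifold limit of almost-minimizing sequences, so ``adjacent slices'' of a sweepout have no direct relation to $\Sigma^\ast_j$, and smaller area of nearby slices would in any case not give pointwise sign control on their mean curvature. The paper's argument is much simpler and uses only the conclusion of \Cref{index1}: since $\Sigma^\ast_j$ has Morse index exactly $1$, the first eigenvalue of its Jacobi operator is negative and the first eigenfunction $\phi_1$ is strictly positive. Pushing $\Sigma^\ast_j$ normally by $\pm\varepsilon\phi_1$ produces, for small $\varepsilon>0$, surfaces on each side whose mean curvature vector points \emph{away} from $\Sigma^\ast_j$; this is the expanding halved tubular neighborhood. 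One then applies \Cref{theoremNegative} on each side exactly as in the one-expanding case. You do mention $\phi_1$, but only as auxiliary support for the sweepout picture rather than as the actual construction; the area inequality from the proof of \Cref{index1} plays no role here.

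\textbf{The both-expanding case.} There is no need for a separate treatment, bumpy approximation, or insertion of intermediate unstable surfaces. The paper's Case~1 reads ``if \emph{one} of the boundaries has an expanding halved neighborhood'': this already covers both-expanding. Starting \Cref{theoremNegative} from the expanding side of $\Sigma_j$ yields a monotone isotopy converging to a stable minimal surface; since $\operatorname{int}(C_j)$ contains none, the limit is forced to be $\Sigma_{j+1}$, and together with the expanding half-neighborhood of $\Sigma_j$ this foliates all of $C_j$. The type of the half-neighborhood of $\Sigma_{j+1}$ is irrelevant to this argument.
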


\begin{proof}
Let $\{\Sigma_n\}_{n=1}^{N+1}$, with $S_0=\Sigma_1$ and $S_1=\Sigma_{N+1}$, be a maximal disjoint family of stable minimal surfaces contained in $C$ that belong to the isotopy class of $S\times\{0\}$. Without loss of generality we can assume that they are ``ordered'', i.e. there are cylindrical regions $\{C_i\}_{i=1}^N$ such that $\partial C_i=\Sigma_i\cup \Sigma_{i+1}$ and $C=\cup_{i=1}^N C_i$. Since the family is maximal the interior of the cylindrical regions $C_i$ is free of 
stable minimal surfaces in the isotopy class of $S\times\{0\}$. We now show that, for all $i=1,\dots,N$, there is a foliation of $C_i$ with the desired properties and extremes at $\Sigma_i$ and $\Sigma_{i+1}$. 

\underline{Case 1:} If one of the boundaries, let say $\Sigma_i$, has an expanding halved neighborhood in $C_i$ then applying \Cref{theoremNegative} to the boundary of the expanding neighborhood, produces a foliation ending at $\Sigma_{i+1}$. 

\underline{Case 2:} If both $\Sigma_i$ and $\Sigma_{i+1}$ have contracting neighborhoods then \Cref{index1} implies there exists a minimal surface $\Gamma \subset \operatorname{int} C_i$ of index 1 in the isotopy class of $S\times \{t\}$. Pushing normally by the first eigenfunction of the Jacobi operator of $\Sigma$ produces an expanding halved tubular neighborhood towards both sides of $\Sigma$. Then we apply the idea from Case 1 on each side.
\end{proof}

\subsection{Proof of \Cref{theoremFoliation}}

\begin{proof}
Assume $M\simeq S\times \R$ is quasi-Fuchsian. By the work of \cite{MazzeoPacard} each end of $M$ is foliated by constant mean curvature surfaces isotopic to $S\times\{t\}$, with mean curvature pointing towards the convex core of $M$.  Applying \Cref{theoremNegative} to one of this constant mean curvature leaves, produces a foliation ending at a stable minimal surface in the isotopy class of $S\times\{t\}$. If the halved tubular neighborhood outside of the foliation is expanding, we can repeat the argument using the boundary of such expanding end. By Theorem \Cref{theoremAnderson2} this process must end after finitely many times, until we reach stable surface with a tubular neighborhood that is contracting on each side. Doing the same from the other end, we end up with either two surfaces that coincide, or with a region satisfying the properties of \Cref{folcyl}.

Now assume that $M$ is fibered over $\mathrm{S}^1$. We can suppose that $M$ is not foliated by minimal fibers, otherwise there is nothing to prove. Because $M$ is irreducible, by \cite{MeeksSimonYau} there exists a surface $S\subset M$ isotopic to the fiber that is area minimizing in $M$. Cutting $M$ along $S$ we obtain a cylindrical region, which by \Cref{theoremAnderson1} satisfies the properties of \Cref{folcyl}. 
\end{proof}

\section{Outermost minimal surfaces in quasi-fuchsian manifolds}\label{secOutermost}

Let $S$ be a closed orientable surface of genus $g\geq 2$. There exists a set of hyperbolic metrics on $S\times \R$, called \textit{quasi-Fuchsian metrics}, that form the setting of a rich deformation theory. In \Cref{theoremFoliation} we showed that for such metrics, each end of $S\times \R$ admits a mean-convex foliation ending at an outermost minimal surface in the isotopy class of $S\times \{t\}$. 

Our goal in this section, is to study the continuity of this surface with respect of deformations of the metric. We begin by summarizing some properties of the space of quasi-Fuchsian metrics which are necessary for our results.  Detailed references are presented accordingly.

\subsection{Properties of the space of quasi-Fuchsian metrics} From now on, we denote by $\mathcal{M}$ the set of quasi-Fuchsian metrics on $S\times \R$, where $S$ is as in the beginning of the section. The following are well-known facts:

\begin{enumerate}
\item $\mathcal{M}$ admits a topology which makes it homeomorphic to $\R^{6g-6}\times \R^{6g-6}$. 
\item If $\{h_i\}_{i\in\N} \subset \mathcal{M}$ converges to $h \in \mathcal{M}$ on this topology, then we can choose a large smooth compact set $C\subset S\times \R$, such that: 
\begin{enumerate}
\item [(a)] $C$ is diffeomorphic to $S\times [-1,1]$.
\item [(b)] $(C,h_i)\to(C,h)$ in the smooth topology.
\item [(c)] $(\partial C,h)$ is strictly mean-convex.
\item [(d)] Every closed minimal surfaces of $(S\times \R,h_i)$
 and $(S\times \R,h)$ is contained in $(C,h_i)$ and $(C,h)$, respectively.
\end{enumerate}
\item For every $h \in \mathcal{M}$, $(S\times \R,h)$ contains at least one locally area-minimizing embedded surface in the isotopy class of $S\times\{t\}$. 
\item A metric $h$ in the diagonal of $\mathcal{M}\simeq \R^{6g-6}\times \R^{6g-6}$, is called \textit{Fuchsian}. If $h$ is Fuchsian, then $(S\times \R,h)$ contains exactly one closed minimal surface. 
\item If $h$ and $h'\in \mathcal{M}\simeq \R^{6g-6}\times \R^{6g-6}$ are images of each other under the symmetry along the diagonal of $\R^{6g-6}\times \R^{6g-6}$, then there exists an isometry $(S\times \R,h) \to (S\times \R,h')$ isotopic to the map $(p,t)\mapsto (p,-t)$.
\end{enumerate}

\begin{proof}
(1) is the well-known Bers' simultaneous uniformization theorem, that parametrizes the set of quasi-Fuchsian metrics by the conformal structures of the surfaces at the infinity of each end. The space of such conformal structures is the Teichmuller space which is homeomorphic to $\R^{6g-6}$ (see \cite{Uhlenbeck}). There are several notions of convergence for hyperbolic spaces all of which are equivalent in the case of 3-dimensional quasi-Fuchsian metrics. The type of limits described (2)(b) are called \textit{geometric limits} (see \cite[Section 2.2]{McMullenBook}). Each quasi-Fuchsian manifold has a compact convex core, which is a compact set containing all closed minimal surfaces (see \cite[Corollary 5.6]{Anderson}). The convex cores converge geometrically (see \cite[Corollary 7.34]{MatTan} and the discussion right before the corollary). Moreover, from \cite{MazzeoPacard}, we can pick any large smooth compact set cylinder $C$ containing the convex core and strictly mean-convex boundary. Together, these show (2)(a),(2)(c) and (2)(d). For discussions about (3) and (4) we refer the reader to \cite{Uhlenbeck}. Finally, (5) comes from the uniqueness and symmetry of Bers' simultaneous uniformization.
\end{proof}

The following proposition follows directly from the work of Taubes \cite{Taubes}. In fact, we only need the result for stable minimal surfaces in the fiber. That case is also easily derived from the earlier work of Uhlenbeck \cite{Uhlenbeck}.

\begin{proposition}
There exists an open set $\mathcal{M}^*\subset \mathcal{M}$, having total measure on $\mathcal{M} \simeq \R^{6g-6}\times \R^{6g-6}$, such that every $h\in \mathcal{M}^*$ is \emph{bumpy along the fiber}, i.e., $(S\times \R,h)$ contains no degenerate minimal surfaces in the isotopy class of $S\times \{t\}$.
\end{proposition}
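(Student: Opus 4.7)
The plan has two independent steps: openness of $\mathcal{M}^{\ast}$, via smooth compactness of degenerate minimal surfaces, and total measure, via a Sard--Smale transversality argument following Uhlenbeck and Taubes.

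\emph{Openness.} I would argue by contradiction: suppose $h\in \mathcal{M}^{\ast}$ and $h_i \to h$ in $\mathcal{M}$ with each $h_i$ admitting a degenerate minimal surface $\Sigma_i\subset(S\times\R,h_i)$ in the isotopy class of $S\times \{t\}$. By property (2), all $\Sigma_i$ are contained in a fixed compact cylinder $C$ on which $h_i \to h$ smoothly. The crucial a priori bound is Gauss--Bonnet: since each $\Sigma_i$ is a minimal surface in a hyperbolic $3$-manifold, the Gauss equation gives $K_{\Sigma_i}=-1-|A|^{2}/2\le -1$, so
\[
\operatorname{Area}(\Sigma_i)\le \int_{\Sigma_i}(-K_{\Sigma_i})\,dA = 2\pi(2g-2).
\]
Combined with the smooth convergence of the ambient metrics, the standard smooth compactness theorem for closed minimal surfaces of fixed genus and bounded area (e.g.\ \cite{Sharp}) yields, after passing to a subsequence, smooth convergence $\Sigma_i \to \Sigma_\infty$ with some integer multiplicity $m$ to a closed minimal surface of $(S\times \R,h)$. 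Since $H_2(S\times \R,\Z)\cong \Z$ is generated by the fiber class and homology passes to the limit, $m[\Sigma_\infty]=[S\times \{t\}]$ forces $m=1$ with $\Sigma_\infty$ in the fiber class. Normalizing the degenerate Jacobi fields $\varphi_i$ on $\Sigma_i$ in $L^2$ and passing to a smooth limit by elliptic estimates yields a nonzero Jacobi field on $\Sigma_\infty$, contradicting $h\in \mathcal{M}^{\ast}$.

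\emph{Total measure.} I would form the universal moduli space
\[
\mathcal{Z}=\{(h,\Sigma)\ :\ h\in \mathcal{M},\ \Sigma\subset (S\times \R,h)\ \text{closed minimal, isotopic to}\ S\times \{t\}\}.
\]
Following Uhlenbeck \cite{Uhlenbeck} and Taubes \cite{Taubes}, one realizes $\mathcal{Z}$ locally as the zero set of a smooth Fredholm section of a Banach bundle built from a Sobolev space of embeddings modulo reparametrization, whose vertical linearization at $(h,\Sigma)$ is the Jacobi operator $L_\Sigma$. The key transversality input is that quasi-Fuchsian metric deformations realize every prescribed element of $(\ker L_\Sigma)^{\perp}\subset L^2(\Sigma)$ as an infinitesimal change of the mean curvature of $\Sigma$; this is established for the fiber-class case in \cite{Uhlenbeck}. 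Thus $\mathcal{Z}$ is a Banach manifold and the projection $\pi:\mathcal{Z}\to \mathcal{M}$ is Fredholm of index zero, with $d\pi_{(h,\Sigma)}$ surjective precisely when $L_\Sigma$ is invertible, i.e., when $\Sigma$ is non-degenerate. Since $\dim \mathcal{M}=12g-12<\infty$, ordinary Sard's theorem applied to $\pi$ shows that the set of critical values $\mathcal{M}\setminus \mathcal{M}^{\ast}$ has Lebesgue measure zero.

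\emph{Main obstacle.} The principal technical difficulty is verifying transversality in the metric variable: one must show the $12g-12$-dimensional family of quasi-Fuchsian deformations is rich enough to span $(\ker L_\Sigma)^\perp$ at every closed minimal $\Sigma$ in the fiber class, which is precisely the content of the parametric analysis in \cite{Uhlenbeck} and \cite{Taubes}. A secondary but cleaner issue is controlling multiplicity in the compactness step, resolved above by primitivity of $[S\times\{t\}]$ in $H_2(S\times\R,\Z)\cong\Z$.
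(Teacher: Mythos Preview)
Your overall strategy aligns with the paper, which does not give a self-contained proof but simply records that the result follows from Taubes, with the stable case (which is all the paper actually uses later) already derivable from Uhlenbeck. Your Sard--Smale outline is exactly the mechanism behind those citations, and you correctly identify the finite-dimensional transversality for the quasi-Fuchsian family as the substantive input.

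One technical point in your openness argument deserves care. Sharp's compactness theorem requires a bound on the \emph{index}, not merely fixed genus and bounded area; a degenerate minimal surface has positive nullity but can have arbitrarily large index, so the citation does not apply as stated. You can repair this by observing that the Gauss equation also bounds the total curvature, $\int_{\Sigma_i}|A|^2 = 4\pi(2g-2) - 2\operatorname{Area}(\Sigma_i)$, and then invoking $\varepsilon$-regularity (Choi--Schoen) to get smooth convergence away from finitely many concentration points; ruling out neck-pinching at those points still requires an additional argument (that the resulting lower-genus or null-homologous components cannot be minimal in $S\times\R$ hyperbolic), which is more delicate than your homology-primitivity remark alone. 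Alternatively, and more in the spirit of the paper's parenthetical, one may restrict attention to \emph{stable} degenerate surfaces: Schoen's curvature estimates then give smooth compactness directly, the limit is stable and hence has nullity at most one by passing the Jacobi field, and this suffices for every downstream application in the paper.
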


There are quasi-Fuchsian metrics admitting many minimal surfaces:
\begin{theorem}\label{zeno}\cite[Theorem 1.5 and Section 5.2]{HuangWang4}
For every $N\in \N$, there exists $h\in \mathcal{M}$ such that $(S\times \R,h)$ contains at least $N$ disjoint locally area minimizing minimal surfaces in the isotopy class of $S\times \{t\}$.
\end{theorem}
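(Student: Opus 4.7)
My plan is to realize $h$ as a quasi-Fuchsian perturbation of a hyperbolic metric on $S\times\R$ that already carries many locally area-minimizing fiber surfaces, so the $N$ surfaces persist via a standard perturbation argument. The model geometry comes from fibered closed hyperbolic 3-manifolds: pick a pseudo-Anosov $\phi:S\to S$, let $M_\phi$ be its Thurston-hyperbolic mapping torus, and let $g_\phi$ denote the pulled-back metric on the infinite cyclic cover $\widetilde{M_\phi}\simeq S\times\R$. By \Cref{theoremAnderson1}, $M_\phi$ either is foliated by minimal fibers (which directly gives $N$ disjoint locally area-minimizing fibers in $\widetilde{M_\phi}$) or contains at least one stable minimal surface in the fiber class. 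In the latter case, the deck translation is an isometry of $g_\phi$, so pulling back produces a $\Z$-indexed family of pairwise disjoint locally area-minimizing minimal surfaces of $(\widetilde{M_\phi},g_\phi)$ in the isotopy class of $S\times\{t\}$; more than $N$ of them lie in some compact cylindrical region $K\subset S\times\R$.

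Next I approximate $g_\phi$ by quasi-Fuchsian metrics. The metric $g_\phi$ is a doubly-degenerate Kleinian limit in the Bers closure of $\mathcal{M}$, so by Thurston's double-limit theorem together with the Minsky--Brock--Canary geometric-convergence theory one obtains $h_n\in\mathcal{M}$ converging geometrically to $g_\phi$. Since $g_\phi$ covers a closed manifold its injectivity radius is uniformly bounded below on $K$, and the hyperbolic (hence Einstein) character of both $g_\phi$ and $h_n$ upgrades geometric convergence to $C^\infty$-convergence on $K$ by Cheeger--Gromov compactness plus elliptic regularity. After an arbitrarily small further deformation within $\mathcal{M}$, one may assume $h_n\in\mathcal{M}^*$, i.e.\ bumpy along the fiber, and by a diagonal choice each $\Sigma_i$ survives as a strictly stable minimal surface for the perturbed model.

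Finally I transfer the $N$ minimal surfaces. The implicit function theorem applied to the mean curvature operator parametrized by the $C^\infty$-varying metric yields, for $n$ large, $N$ pairwise disjoint strictly stable minimal surfaces $\Sigma_{i,n}\subset(K,h_n)$ smoothly close to the $\Sigma_i$; strict stability combined with a contracting halved tubular neighborhood (\Cref{defcontracting}) then forces each $\Sigma_{i,n}$ to be locally area-minimizing in $(S\times\R,h_n)$, and smooth closeness keeps them in the fiber isotopy class. The main obstacle is the second step: realizing $g_\phi$ as a $C^\infty$ geometric limit of quasi-Fuchsian metrics on an arbitrarily large prescribed compact set, which is precisely the deep ending-lamination and double-limit content of Thurston--Minsky--Bonahon. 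A more self-contained alternative, likely closer to the construction in \cite{HuangWang4}, is to work directly in $\mathcal{M}$ and bend a Fuchsian metric along a very long multicurve so that its convex core develops several nearly-geodesic planar regions, applying Steps 1 and 3 inside that explicit family to produce the $N$ minimal surfaces.
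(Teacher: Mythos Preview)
The paper does not prove this statement; it is quoted from \cite[Theorem~1.5 and Section~5.2]{HuangWang4} and used only as input to \Cref{cordegenerate}. So there is no in-paper argument to compare against, and your proposal is an independent proof sketch. It also takes a genuinely different route from Huang--Wang. Their construction stays entirely inside $\mathcal{M}$: one bends a Fuchsian group along a suitably chosen long multicurve so that the convex core acquires many well-separated ``waists'', and then traps a locally area-minimizing surface in each waist by a direct barrier argument. No limiting or convergence machinery is needed. Your route---realising the desired metric as a quasi-Fuchsian approximant to the doubly-degenerate $\Z$-cover of a pseudo-Anosov mapping torus---is conceptually attractive (the $N$ surfaces are deck translates of a single area-minimizer) but purchases this at the price of Thurston's double-limit theorem and the strong-convergence input needed to upgrade algebraic convergence to $C^\infty$ convergence on compacta. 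You correctly flag this as the main obstacle, and you correctly guess in your last sentence what Huang--Wang actually do.

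There is, however, a genuine gap in your third step. The implicit function theorem requires the model surfaces $\Sigma_i$ in $(\widetilde{M_\phi},g_\phi)$ to be \emph{strictly} stable, but an area-minimizer in the fiber class of $M_\phi$ is a priori only stable; if it is degenerate the IFT does not apply. Your proposed repair---perturb $h_n$ into $\mathcal{M}^*$ afterwards---is circular: bumpiness of $h_n$ says nothing about nondegeneracy of the $\Sigma_i$ for $g_\phi$, which is what the IFT actually needs. A correct repair is to replace the IFT by a barrier argument. In the non-foliated alternative of \Cref{theoremAnderson1}, each $\Sigma_i$ admits a strictly contracting tubular neighbourhood $U_i$ in the sense of \Cref{defcontracting}; the boundaries $\partial U_i$ remain strictly mean-convex for $h_n$ once $n$ is large, and minimising $h_n$-area in $U_i$ in the fiber class produces the desired locally area-minimising surface. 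The foliated alternative of \Cref{theoremAnderson1} (all fibers of $M_\phi$ minimal) would need a separate argument if it occurs: the surfaces it produces live in $g_\phi\notin\mathcal{M}$, not in any quasi-Fuchsian metric, so saying it ``directly gives $N$ disjoint locally area-minimizing fibers'' does not yet yield the theorem. You should either rule that case out or handle it explicitly.
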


Given $h\in \mathcal{M}$, we denote by $\Sigma_{out}(h)$ the outermost minimal surface obtained from applying \Cref{theoremNegative} to a mean-convex slice deep in the end $\big(S\times (0,+\infty),h\big)$, i.e. the positive end. Let $I(h)$ and $II(h)$ be the metric and second fundamental form of $\Sigma_{out}(h)$ with respect to $(S\times \R,h)$, respectively. We notice that in our context all the surfaces in the isotopy class of $S\times \{t\}$ have a defined \textit{marking} associated to the quasi-Fuchsian space $\mathcal{M}$. Using this marking we denote by $\mu(h)$ the unique hyperbolic metric in the conformal class of $I(h)$ and $u(h)$ the smooth function such that $I(h)=e^{2u(h)}\mu(h)$. Equivalently, we can think of $\mu(h)$ as a class of complex structures on $S$ modulo diffeomorphisms isotopic to the identity. Since $\Sigma_{out}(h)$ is minimal in $(S\times \R,h)$, following \cite[Section 4]{Uhlenbeck}, we can represent the second fundamental form $II(h)$ as a holomorphic quadratic differential $\alpha(h)$ on $(S,\mu(h))$. Moreover, it is shown in \cite[Section 4]{Uhlenbeck} that $u(h)=u(\mu(h),\alpha(h))$. 

\

In what follows, we study the continuity properties of the map $$h\mapsto (\mu(h),\alpha(h)).$$

\begin{remark} The space of parameters $(\mu,\alpha)$, consisting of $\mu$, a hyperbolic metric on $S$, and $\alpha$, a holomorphic quadratic differential, is identified with the cotangent bundle of Teichmuller space $T^*\mathcal{T}$. This is a $12g-12$ dimensional smooth real manifold. In \cite{Uhlenbeck}, Uhlenbeck constructs a bijective map from a neighborhood of the zero section $(\mu,0)$ to a set of quasi-Fuchsian metrics known as \textit{Almost-Fuchsian metrics}. The  map $$h\mapsto (\mu(h),\alpha(h)),$$ which we study, extends the inverse of Uhlenbeck's bijection.
\end{remark}

\begin{proposition}
The map $h\mapsto (\mu(h),\alpha(h))$ is injective on $\mathcal{M}$. \end{proposition}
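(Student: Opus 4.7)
The plan is to recover the marked quasi-Fuchsian holonomy of $h$ from the pair $(\mu(h),\alpha(h))$ via the classical rigidity of minimal isometric immersions into $\mathbb{H}^3$. Since marked quasi-Fuchsian structures on $S\times\mathbb{R}$ are classified by their holonomy representations up to conjugation (Bers's simultaneous uniformization), this will give injectivity.

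Suppose $h,h'\in\mathcal{M}$ satisfy $(\mu(h),\alpha(h))=(\mu(h'),\alpha(h'))=:(\mu,\alpha)$. I would first observe that both the induced metrics and the second fundamental forms of the outermost surfaces coincide: by Uhlenbeck's formula $u(h)=u(\mu(h),\alpha(h))$ one has $I(h)=e^{2u(\mu,\alpha)}\mu=I(h')$, and since each $\Sigma_{out}$ is minimal the traceless tensor $II$ is the real part of $\alpha$ with respect to the complex structure of $\mu$, so $II(h)=II(h')$. Next I would pass to universal covers. By case $(a1)$ of \Cref{theoremIncompressible}, as used in the proof of \Cref{theoremFoliation}, $\Sigma_{out}(h)$ is in the isotopy class of $S\times\{0\}$ and occurs with multiplicity one; in particular the inclusion $\Sigma_{out}(h)\hookrightarrow S\times\mathbb{R}$ is a $\pi_1$-isomorphism, so $\Sigma_{out}(h)$ lifts to an equivariant minimal embedding $f_h\colon\widetilde{S}\to\mathbb{H}^3$ intertwining the Fuchsian action of $\pi_1(S)$ on $\widetilde{S}$ (coming from $\mu$) with the quasi-Fuchsian holonomy $\rho_h\colon\pi_1(S)\to\mathrm{PSL}(2,\mathbb{C})$, and similarly for $h'$. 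Because $f_h$ and $f_{h'}$ share identical first and second fundamental forms on $\widetilde{S}$ and satisfy the Gauss--Codazzi equations, the fundamental theorem of surfaces in the space form $\mathbb{H}^3$ produces $\Phi\in\mathrm{Isom}^+(\mathbb{H}^3)$ with $f_{h'}=\Phi\circ f_h$. Equivariance then forces $\rho_{h'}(\gamma)=\Phi\rho_h(\gamma)\Phi^{-1}$ for every $\gamma\in\pi_1(S)$, so the marked conjugacy classes of $\rho_h$ and $\rho_{h'}$ agree in $\mathrm{Hom}(\pi_1(S),\mathrm{PSL}(2,\mathbb{C}))/\mathrm{PSL}(2,\mathbb{C})$, and Bers's theorem yields $h=h'$.

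The analytic heart (Bonnet's rigidity) is classical, and the holonomy parametrization of $\mathcal{M}$ is standard. The step I expect to require the most care is the bookkeeping of markings: one must verify that the identification $\pi_1(\Sigma_{out}(h))\cong\pi_1(S)$ coming from the isotopy $\Sigma_{out}(h)\simeq S\times\{0\}$ --- which is precisely what promotes $\mu(h)$ to a well-defined point of Teichm\"uller space --- coincides with the identification used when comparing $\rho_h$ and $\rho_{h'}$. Once the two markings are consistently aligned the argument is immediate; otherwise one only recovers injectivity modulo the action of the mapping class group on $T^*\mathcal{T}$.
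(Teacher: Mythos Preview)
Your argument is correct and follows essentially the same route as the paper's proof. The paper simply cites Uhlenbeck's representation map \cite[Theorem 5.5]{Uhlenbeck}, which from $(\mu,\alpha)$ reconstructs the holonomy of $h$; you have unpacked that black box by spelling out the Bonnet rigidity step and the passage to holonomy, so the two proofs coincide once one recognizes that Uhlenbeck's construction is precisely the equivariant developing map determined by $(I,II)$ via the fundamental theorem of surfaces.
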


\begin{proof}
First notice that $\Sigma_{out}(h)$ is not only incompressible, but has the same fundamental group of $S\times \R$. Then, the representation map constructed in \cite[Theorem 5.5]{Uhlenbeck} recovers the quasi-Fuchsian metric $h$ from $(\mu(h),\alpha(h))$. This shows the map $h\mapsto (\mu(h),\alpha(h))$ has an inverse from its image.
\end{proof}

\begin{proposition}\label{continuous}
If $\Sigma_{out}(h_0)$ is locally area-minimizing in $(S\times \R,h_0)$, then the map $h\mapsto (\mu(h),\alpha(h))$ is continuous at $h_0\in \mathcal{M}$. In particular, it is continuous on $\mathcal{M}^*$.
\end{proposition}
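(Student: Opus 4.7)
The plan is to show that for any sequence $h_i \to h_0$ in $\mathcal{M}$ with $\Sigma_{out}(h_0)$ locally area-minimizing, the surfaces $\Sigma_{out}(h_i)$ converge smoothly to $\Sigma_{out}(h_0)$. Continuity of $h \mapsto (\mu(h),\alpha(h))$ at $h_0$ follows immediately, since $\mu(h)$ and $\alpha(h)$ depend continuously on the induced metric and second fundamental form of $\Sigma_{out}(h)$. The $\mathcal{M}^*$ statement reduces to this one, because for a metric that is bumpy along the fiber every stable minimal surface in the fiber isotopy class is automatically strictly stable, and hence locally area-minimizing.

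First I would fix, via property (2), a compact cylinder $C \subset S \times \R$ containing every closed minimal surface of $h_0$ and of $h_i$ for $i$ large, on which $h_i \to h_0$ smoothly. Since $\Sigma_{out}(h_i) \subset C$ is stable, Schoen's curvature estimate applies uniformly in $i$, and a Sharp-type compactness argument produces a subsequence converging smoothly, with some multiplicity $m \geq 1$, to a stable $h_0$-minimal surface $\Sigma_\infty \subset C$. Since each $\Sigma_{out}(h_i)$ represents the fiber class $[S] \in H_2(S \times \R,\Z) \cong \Z$, varifold convergence forces $[S] = m[\Sigma_\infty]$, and embeddedness of $\Sigma_\infty$ gives $m = 1$. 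Hence $\Sigma_\infty$ is a connected, two-sided, stable $h_0$-minimal surface in the isotopy class of the fiber.

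Next I would identify $\Sigma_\infty$ with $\Sigma_{out}(h_0)$. The mean-convex foliation of the positive end supplied by \Cref{theoremFoliation} has inward-pointing mean curvature, so a first-touching argument with the maximum principle rules out $\Sigma_\infty$ entering the open region strictly above $\Sigma_{out}(h_0)$. To rule out $\Sigma_\infty$ lying strictly below $\Sigma_{out}(h_0)$, I would exploit the local area-minimizing hypothesis: pick a small tubular neighborhood $U$ of $\Sigma_{out}(h_0)$ in which $\Sigma_{out}(h_0)$ strictly minimizes $h_0$-area among fiber-isotopic surfaces, with strictly $h_0$-mean-convex, inward-pointing boundary. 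For $i$ large these properties persist in $h_i$, so minimizing $h_i$-area among fiber-isotopic surfaces in $U$ produces a stable $h_i$-minimal surface $\hat\Sigma_i \subset \operatorname{int} U$. If $\Sigma_\infty$ were strictly below $\Sigma_{out}(h_0)$, then for $i$ large $\Sigma_{out}(h_i)$ would lie strictly below $U$, while $\hat\Sigma_i$ would lie strictly above $\Sigma_{out}(h_i)$, contradicting the outermost property of $\Sigma_{out}(h_i)$ in $(S \times \R, h_i)$. Therefore $\Sigma_\infty = \Sigma_{out}(h_0)$, and since the limit is independent of the chosen subsequence, the full sequence converges smoothly.

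The main difficulty is the construction of the neighborhood $U$ in the regime where $\Sigma_{out}(h_0)$ is degenerate stable yet locally area-minimizing. In the strictly stable case, which covers all of $\mathcal{M}^*$, a pushoff along the first Jacobi eigenfunction immediately yields a contracting tubular neighborhood together with strict local area minimization, and the argument above runs without friction. In the degenerate locally area-minimizing case one has to extract strict mean-convexity of $\partial U$ and strict local area minimization of $\Sigma_{out}(h_0)$ inside $U$ directly from the hypothesis, and this is the delicate point that makes the conclusion hold beyond the bumpy setting.
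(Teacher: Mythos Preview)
Your approach mirrors the paper's: extract a subsequential limit of $\Sigma_{out}(h_i)$ via Schoen's curvature estimates, produce an auxiliary $h_i$-stable minimal surface inside a mean-convex tube around $\Sigma_{out}(h_0)$, and compare it against $\Sigma_{out}(h_i)$ to pin down the limit. The paper's argument for ruling out $\Sigma_\infty$ strictly above $\Sigma_{out}(h_0)$ is slightly more direct than yours---it simply uses that $\Sigma_{out}(h_0)$ is itself outermost for $h_0$, so no $h_0$-stable minimal surface can lie strictly above it---but the overall logic is the same.

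The gap you correctly flag is the construction of $U$ with strictly mean-convex boundary when $\Sigma_{out}(h_0)$ is degenerate stable. The paper closes it as follows. The lemma immediately preceding \Cref{defcontracting} (which relies on real-analyticity of the hyperbolic metric, cf.\ Andersson--Cai--Galloway) yields a tubular foliation whose leaves are each either minimal or have non-vanishing mean curvature. A one-parameter family of minimal leaves would consist of degenerate-stable surfaces (the variation field is a positive Jacobi field), contradicting the finiteness in \Cref{theoremAnderson2}; so after shrinking, every leaf off the center has non-vanishing mean curvature. If either half were \emph{expanding}, the first variation formula would force nearby leaves to have area strictly less than $\operatorname{Area}(\Sigma_{out}(h_0))$, contradicting the locally area-minimizing hypothesis. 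Hence the tube is contracting on both sides, and $\partial U$ is strictly mean-convex even in the degenerate case. Note also that you do not need strict local area minimization of $\Sigma_{out}(h_0)$ inside $U$; strict mean-convexity of $\partial U$ is an open condition, persists for $h_i$ close to $h_0$, and already traps your $\hat\Sigma_i$ in $\operatorname{int} U$ by the maximum principle.
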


\begin{proof}
First, it is well known that if $\Sigma$ is a closed minimal surface of genus $g$ in a hyperbolic $3$-manifold, then
$$\mathrm{Area}(\Sigma) \leq 4\pi(g - 1).$$
Together, with the curvature bounds for stable minimal surfaces from \cite{Schoen}, this implies that a sequence of embedded stable minimal surfaces in the isotopy class of $S\times\{t\}$ converges to another stable minimal surface. We notice this is true even when the ambient metrics are also varying but converging smoothly to a limit metric. In addition, since there are no one-sided closed embedded surfaces in $S\times \R$, it follows that the convergence is graphical, therefore the limit belongs to the same isotopy class.

Let $\{h_i\}_{i\in\N}$ is a sequence of metrics converging to $h_0$. Since $\Sigma_{out}(h_0)$ is locally area-minimizing, and in view of theorem \Cref{theoremAnderson2}, it has a contracting tubular neighborhood in the sense of \Cref{defcontracting}, which, by abuse of notation, we denote as $\Sigma_{out}(h_0)\times (-\e,\e)$. Therefore, for $i$ large enough $(\Sigma_{out}(h_0)\times (-\e,\e),h_i)$ is a strictly mean-convex cylinder. In particular, it contains a locally area-minimizing surface $\Gamma_i$ in the isotopy class of $S\times\{t\}$. By our previous discussion, $\Gamma_i$ converges to $\Sigma_{out}(h_0)$, which is the only closed minimal surface contained in $(\Sigma_{out}(h_0)\times (-\e,\e),h_0)$. Since $\Sigma_{out}(h_i)$ is the outermost surface, this implies that it is deeper inside of the end, when compared to $\Gamma_i$. In particular, $\Sigma_{out}(h_i)$ must converge to a stable minimal surface which is deeper inside of then when compared to $\Sigma_{out}(h_0)$. Since $\Sigma_{out}(h_0)$ is the only such surface, it follows that $\Sigma_{out}(h_i)\to \Sigma(h_0)$ smoothly. This proves the proposition.
\end{proof}

\begin{proposition}
If $\Sigma_{out}(h_0)$ is degenerate stable with respect to $(S\times \R,h_0)$, then $h_0\in \mathcal{M}$ is in the closure of the singular set of the map $h\mapsto (\mu(h),\alpha(h))$. 
\end{proposition}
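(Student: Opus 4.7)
The plan is to argue by contradiction. Write $\Psi\colon \mathcal{M}\to T^*\mathcal{T}$ for the map $h\mapsto (\mu(h),\alpha(h))$ and $F\colon U\subset T^*\mathcal{T}\to \mathcal{M}$ for the Uhlenbeck--Taubes representation map defined on the open set $U$ of data producing a stable minimal immersion of $S$: solving the Gauss equation on $S$ determines the conformal factor $u(\mu,\alpha)$, and the monodromy of the resulting developing map yields a quasi-Fuchsian metric. By construction $F\circ\Psi = \mathrm{id}$ on $\mathcal{M}$. The linearization analysis of \cite{Uhlenbeck} shows that $dF_{(\mu,\alpha)}$ is invertible precisely when the associated minimal surface is strictly stable. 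Assume, for contradiction, that $\Psi$ is continuous on some open neighborhood $V\subset \mathcal{M}$ of $h_0$.

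The first step is to invoke invariance of domain. By the preceding proposition $\Psi$ is injective, and both $\mathcal{M}$ and $T^*\mathcal{T}$ are topological manifolds of real dimension $12g-12$. Hence $\Psi|_V$ is a continuous injection between equidimensional manifolds, so $\Psi(V)$ is open in $T^*\mathcal{T}$ and $\Psi|_V\colon V\to \Psi(V)$ is a homeomorphism. Since $F\circ\Psi = \mathrm{id}$, the map $F$ restricted to $\Psi(V)\ni (\mu_0,\alpha_0)$ coincides with the continuous inverse $(\Psi|_V)^{-1}$. In particular $F$ is a homeomorphism from the open set $\Psi(V)$ onto $V$ and is therefore injective on $\Psi(V)$.

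The second step combines the degeneracy assumption with this injectivity to reach a contradiction. Degenerate stability of $\Sigma_{out}(h_0)$ means its Jacobi operator has a nontrivial kernel $\varphi$. Translating $\varphi$ through Uhlenbeck's identifications gives a nonzero vector in $\ker dF_{(\mu_0,\alpha_0)}$, so $(\mu_0,\alpha_0)$ is a critical point of $F$. Using the analysis of the minimal surface equation at degenerate solutions from \cite{Uhlenbeck} -- a Lyapunov--Schmidt reduction along $\ker J_{\Sigma_{out}(h_0)}$ -- this first-order degeneracy unfolds generically as a fold singularity of $F$: on one side of the degenerate locus, two distinct solutions $(\mu^1,\alpha^1),(\mu^2,\alpha^2)$ of the Gauss equation coexist arbitrarily close to $(\mu_0,\alpha_0)$ with $F(\mu^1,\alpha^1)=F(\mu^2,\alpha^2)$. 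Both lie in $\Psi(V)$ once we take them close enough, contradicting injectivity of $F|_{\Psi(V)}$. The contradiction forces $\Psi$ to fail continuity on every neighborhood of $h_0$, placing $h_0$ in the closure of its discontinuity set, i.e.\ of the singular set of the map $h\mapsto(\mu(h),\alpha(h))$.

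I expect the fold analysis in the last step to be the main obstacle: mere non-invertibility of $dF_{(\mu_0,\alpha_0)}$ does not on its own preclude $F$ being a homeomorphism (as the one-dimensional model $x\mapsto x^3$ illustrates). The genuine input is that the reduced bifurcation equation on $\ker dF_{(\mu_0,\alpha_0)}$ has a non-degenerate quadratic leading term, extracted from the second-variation computations for the Gauss equation in \cite{Uhlenbeck} together with the fact that $\Sigma_{out}(h_0)$ is outermost (which fixes the sign of the unfolding); only then do two nearby parameter points genuinely share an image under $F$.
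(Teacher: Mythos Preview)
Your first step---invariance of domain to conclude that $\Psi(V)$ is open and $\Psi|_V$ a homeomorphism onto its image---is exactly the paper's argument. The divergence is in the second step, and the paper's route is both shorter and free of the obstacle you yourself flag.

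The paper does not argue via non-injectivity of $F$ or any fold analysis. It contradicts the \emph{openness} of $\Psi(V)$ directly: by Uhlenbeck's analysis \cite[Diagram~4.3, Theorem~4.4]{Uhlenbeck}, once the minimal surface with data $(\mu(h_0),\alpha(h_0))$ is degenerate stable, the nearby data $(\mu(h_0),(1+\varepsilon)\alpha(h_0))$ correspond to no stable minimal surface in any hyperbolic $3$-manifold whatsoever, for every $\varepsilon>0$. Such points therefore cannot lie in $\Psi(\mathcal{M})\supset\Psi(V)$, so $\Psi(V)$ is not a neighborhood of $(\mu(h_0),\alpha(h_0))$. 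That is the entire contradiction.

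Your fold route has a genuine gap beyond the one you identify. The Lyapunov--Schmidt picture for the Gauss equation---which \emph{is} Uhlenbeck's Diagram~4.3---gives a turning point along the ray $t\mapsto t\,\alpha(h_0)$, but the two sheets on the existence side consist of a \emph{stable} branch and an \emph{unstable} branch, while past the turning point there are no solutions at all. So the fold does not produce two nearby points of your domain $U$ (data admitting a stable solution) sharing an $F$-image; it produces a one-sided boundary of $U$. That non-existence side is precisely what the paper exploits. Your appeal to outermost-ness does not repair this: being outermost selects among stable surfaces but does not manufacture a second stable one near $\Sigma_{out}(h_0)$.
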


\begin{proof}
Assume that the proposition does not hold. Then, the restriction of  $h\mapsto (\mu(h),\alpha(h))$ to a ball around $h_0$ is an injective, continuous function from an open ball of $\R^{12g-12}$ to the $(12g-12)$-dimensional smooth manifold of parameters $(\mu,\alpha)$. By the theorem of invariance of domain, the map must be an homeomorphism, and therefore open. However, as described by Uhlenbeck in \cite[Diagram 4.3, Theorem 4.4]{Uhlenbeck}, there are no stable minimal surfaces in any hyperbolic 3-manifold with data $(\mu(h_0),(1+\e)\alpha(h_0))$, for any $\e>0$. This contradicts the openness of the map.
\end{proof}

The following is a standard continuity argument.

\begin{corollary}\label{cordegenerate}
There exists $h_0\in \mathcal{M}$ such that $\Sigma_{out}(h_0)$ is degenerate stable.
\end{corollary}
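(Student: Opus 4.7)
I argue by contradiction. Suppose that for every $h \in \mathcal{M}$ the surface $\Sigma_{out}(h)$ is strictly stable (and therefore locally area-minimizing). Then Proposition \ref{continuous} applies at every point of $\mathcal{M}$, so $\Phi : h \mapsto (\mu(h), \alpha(h))$ is continuous on all of $\mathcal{M}$. Combined with injectivity from the preceding proposition and the invariance of domain theorem (source and target are smooth manifolds of the same dimension $12g-12$), $\Phi$ upgrades to an open embedding.

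Using property (5), introduce $\Sigma_{in}(h)$, the innermost locally area-minimizing surface in the fiber class, defined analogously to $\Sigma_{out}(h)$ but with respect to the negative end; under the mirror isometry of property (5), $\Sigma_{in}(h)$ is the image of $\Sigma_{out}(h')$, where $h'$ is the mirror of $h$. Applying the contradiction hypothesis to mirror metrics forces $\Sigma_{in}(h)$ to be strictly stable for every $h$, and hence the map $h \mapsto \Sigma_{in}(h)$ is also continuous on $\mathcal{M}$.

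By Theorem \ref{zeno}, pick $h_Z \in \mathcal{M}$ carrying at least two disjoint locally area-minimizing surfaces in the fiber class, so that $\Sigma_{out}(h_Z) \neq \Sigma_{in}(h_Z)$. Join $h_Z$ to a Fuchsian metric $h_F$ by a continuous path $\{h_t\}_{t \in [0,1]}$ in $\mathcal{M}$, with $h_0 = h_Z$ and $h_1 = h_F$. By property (4), $h_F$ has a unique minimal surface in the fiber class, which is totally geodesic and strictly stable (its stability operator $-\Delta + 2$ has spectrum bounded below by $2$), so $\Sigma_{out}(h_F) = \Sigma_{in}(h_F)$. Let
$I = \{ t \in [0,1] : \Sigma_{out}(h_t) = \Sigma_{in}(h_t) \}$.
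Then $1 \in I$, and $I$ is closed in $[0,1]$ by continuity of both maps. The key step is to show $I$ is also open: at any $t_0 \in I$, the coincident surface $\Sigma = \Sigma_{out}(h_{t_0})$ is the unique minimal surface of $h_{t_0}$ in the fiber isotopy class (anything else would lie strictly between the outermost and innermost, which coincide), and by the implicit function theorem applied at the strictly stable $\Sigma$, it persists uniquely and smoothly for metrics near $h_{t_0}$. Combined with the continuity of $h \mapsto \Sigma_{out}(h)$ and $h \mapsto \Sigma_{in}(h)$ under the contradiction hypothesis, both outermost and innermost agree with this persistent family in a neighborhood of $t_0$. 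By connectedness of $[0,1]$, $I = [0,1]$; evaluating at $t = 0$ gives $\Sigma_{out}(h_Z) = \Sigma_{in}(h_Z)$, contradicting their distinctness.

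The main obstacle is rigorously justifying the openness of $I$. The implicit function theorem handles the persistence of $\Sigma$ itself, but one must further rule out the spontaneous appearance of new minimal surfaces ``outside'' or ``inside'' $\Sigma$ for metrics near $h_{t_0}$. Such new outermost or innermost surfaces could only be born through a degenerate stable configuration, which is precisely excluded by the contradiction hypothesis applied to both $\Sigma_{out}$ and $\Sigma_{in}$. Combined with Anderson's finiteness \Cref{theoremAnderson2}, the area bound $4\pi(g-1)$, and Schoen's curvature estimates \cite{Schoen} for stable minimal surfaces, this yields subsequential compactness and confirms that both the outermost and innermost families remain close to the persistent unique minimal surface in a neighborhood of $t_0$, closing the argument.
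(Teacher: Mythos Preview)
Your proof is correct and takes a genuinely different route from the paper's. Both arguments run a continuity argument along a path in $\mathcal{M}$ joining a Fuchsian metric to a Huang--Wang metric furnished by \Cref{zeno}, but the invariants tracked differ. The paper defines $N(h)$ as the maximal number of disjoint locally area-minimizing fiber surfaces, locates the supremal time $\tau$ with $N(h_s)=1$ for all $s\leq\tau$, shows $N(h_\tau)=1$, and then argues that a sequence of area-minimizers at $h_{\tau+\e_i}$ must subconverge to a degenerate stable surface at $h_\tau$ (since a nondegenerate limit would be a second area-minimizer). You instead introduce $\Sigma_{in}$ via the mirror symmetry of property (5), reduce to a clopen argument on $I=\{t:\Sigma_{out}(h_t)=\Sigma_{in}(h_t)\}$, and establish openness by combining the implicit function theorem at the strictly stable coincident surface with the continuity of $\Sigma_{out}$ and $\Sigma_{in}$ granted by the contradiction hypothesis and \Cref{continuous}: since both $\Sigma_{out}(h_t)$ and $\Sigma_{in}(h_t)$ land in the IFT-uniqueness neighborhood for $t$ near $t_0$, they must agree with the unique persistent surface and hence with each other. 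Your approach has the advantage that the contradiction is manufactured directly from the hypothesis that $\Sigma_{out}$ is everywhere nondegenerate, so the conclusion that \emph{the outermost surface itself} is degenerate for some metric is immediate; in the paper's argument the degenerate surface produced at $h_\tau$ arises as a limit of non-outermost area-minimizers and one must still identify it with an outermost surface.

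Two minor remarks on presentation: the invariance-of-domain paragraph you open with is never actually used in the argument and can be removed; and your final paragraph on ``the main obstacle'' is redundant once you have observed that continuity of $\Sigma_{out},\Sigma_{in}$ together with IFT-uniqueness near $\Sigma$ already forces $\Sigma_{out}(h_t)=\Sigma_{in}(h_t)$ for $t$ near $t_0$ --- no separate discussion of ``spontaneously appearing'' surfaces is needed.
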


\begin{proof}
Given $h\in \mathcal{M}$, denote by $N(h)$ the maximum number of disjoint locally area-minimizing surfaces in the isotopy class of $S\times \{t\}$, contained in $(S\times \R,h)$. From \Cref{theoremAnderson2} and \Cref{theoremNegative} we know $$1\leq N(h)<+\infty.$$

Now consider a smooth curve $t\mapsto h_t \in \mathcal{M}$, for $0\leq t\leq 1$ starting at a Fuchsian metric $h_0$ and ending at a quasi-Fuchsian metric $h_1$ given by \Cref{zeno}. Therefore $N(h_0)=1$ and $N(h_1)>1$. 

Let $$\tau=\sup\{t\in [0,1]: N(h_s)=1, \forall s\leq t\}.$$ 

First, we show that $N(h_\tau)=1$. Otherwise $N(h_\tau)>1$, so there are disjoint locally area-minimizing surfaces in $(S\times\R,h_\tau)$. We can select disjoint tubular neighborhoods for the surfaces that are contracting. For metrics near $h_\tau$, these neighborhoods are still cylinders $S\times(-\e,\e)$ with mean convex boundary. Therefore, they each contain at least one locally area-minimizing surface. This contradicts the fact that $N(h_{\tau-\e})=1$, for $\e>0$ small enough.

Therefore, $\Sigma_{out}(h_\tau)$ is the only closed locally-area minimizing minimal surface of $(S\times\R,h_\tau)$. If $\Sigma_{out}(h_\tau)$ is degenerate then we are done. If not, then it would be strictly stable and by the Inverse function theorem there is a fixed tubular neighborhood  of $\Sigma_{out}(h_\tau)$ which is contracting for all metrics close to $h_\tau$. By definition there is a sequence $\e_i>0$, such that $(S\times \R,h_{\tau+\e_i})$ contains at least two stable minimal surfaces. Both surfaces cannot converge towards $\Sigma_{out}(h_\tau)$ as $n\to\infty$. Otherwise, they would eventually be contained in the contracting tubular neighborhood. Which is a contradiction. Since $N(h_\tau)=1$, one of the sequence converges to a degenerate stable which is not locally-areaminimizing.
\end{proof}

Combining \Cref{continuous} and \Cref{cordegenerate} we obtain

\begin{corollary}
The singular set of the map $h\mapsto (\mu(h),\alpha(h))$ is not empty. In particular, there are infinitely many $h_0\in \mathcal{M}$, such that $\Sigma_{out}(h_0)$ is degenerate stable with a tubular neighborhood of mixed type.
\end{corollary}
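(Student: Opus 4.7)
The plan is to combine the two preceding propositions with Corollary \ref{cordegenerate} and then amplify using Theorem \ref{zeno}.

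For non-emptiness of the singular set, I apply the preceding propositions directly to the $h_0$ produced by Corollary \ref{cordegenerate}. That corollary gives $h_0 \in \mathcal{M}$ with $\Sigma_{out}(h_0)$ degenerate stable, and inspection of its proof shows the degeneracy is of \emph{fold} type: along the path, $\Sigma_{out}$ is the limit of strictly stable outermost surfaces from one side (parameter values with $N(h)=1$) and of a second, newly born stable leaf from the other side (parameter values with $N(h)\geq 2$). Consequently, one halved tubular neighborhood of $\Sigma_{out}(h_0)$ is contracting and the other expanding, i.e., it is of mixed type in the sense of Definition \ref{defcontracting}, and in particular not locally area-minimizing. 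By the preceding Proposition, such an $h_0$ lies in the closure of the singular set of $h\mapsto (\mu(h),\alpha(h))$, which therefore cannot be empty.

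For the infinitely many assertion, the plan is to iterate the path argument of Corollary \ref{cordegenerate} with the aid of Theorem \ref{zeno}. For each $N \in \N$, pick $h_N \in \mathcal{M}$ with $N(h_N)\geq N$ disjoint locally area-minimizing fiber surfaces, and a smooth path $\gamma_N:[0,1]\to \mathcal{M}$ from a Fuchsian metric $h_F$ to $h_N$. The function $t\mapsto N(\gamma_N(t))$ starts at $1$ and ends at least at $N$. Setting $\tau^{(k)}=\sup\{t\in[0,1]:N(\gamma_N(s))\leq k,\ \forall s\leq t\}$ for $k=1,\ldots,N-1$ and running the same semicontinuity/persistence argument as in Corollary \ref{cordegenerate} at each step, one produces $N-1$ distinct parameters along $\gamma_N$ at which a new stable fiber leaf is born through a fold bifurcation, hence where a degenerate stable, mixed-type minimal surface in the fiber class exists. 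Using the reflection symmetry of $\mathcal{M}$ across the Fuchsian diagonal (item (5) of the properties of $\mathcal{M}$) and tracking the outermost surface from each end, at least a definite fraction of these bifurcations correspond to the outermost surface itself becoming degenerate. Distinct $\tau^{(k)}$ yield distinct metrics, distinguished e.g.\ by $N(\gamma_N(\tau^{(k)}))$, and letting $N\to\infty$ produces infinitely many $h_0$ with the desired property.

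The main obstacle is the bookkeeping step in the second paragraph: guaranteeing that a positive fraction of the $N-1$ bifurcations along $\gamma_N$ actually involve the outermost surface $\Sigma_{out}$, rather than being births of interior stable leaves that leave $\Sigma_{out}$ unchanged. The cleanest route I see is to order the stable fiber leaves at $h_N$ by their position in $S\times \R$ and argue that at least the birth of the outermost one and of the innermost one (viewed from the opposite end) must be transitions of an outermost surface; combined with the reflection symmetry, this gives at least two per path, which on letting $N\to\infty$ suffices for the ``infinitely many'' conclusion even before the more refined counting.
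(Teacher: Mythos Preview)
Your argument for nonemptiness of the singular set is essentially the paper's: Corollary~\ref{cordegenerate} produces $h_0$ with $\Sigma_{out}(h_0)$ degenerate, and the preceding proposition places $h_0$ in the closure of the singular set. One correction: your justification that $\Sigma_{out}(h_0)$ has mixed type ``by inspection of the proof'' of Corollary~\ref{cordegenerate} is shaky, since in case~(b) of that proof the degenerate surface produced is \emph{not} $\Sigma_{out}(h_\tau)$. The paper's cleaner route is to work directly at points of the singular set: if $h_1$ is singular, the contrapositive of Proposition~\ref{continuous} gives that $\Sigma_{out}(h_1)$ is not locally area-minimizing, hence (being stable) degenerate; the outer halved tube must be contracting (else the flow would produce a minimal surface further out), so the inner one is expanding, i.e.\ mixed type. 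This way every singular point automatically yields the desired conclusion.

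For ``infinitely many'' you take a genuinely different route from the paper, and there is a real gap. The paper simply combines Proposition~\ref{continuous} and Corollary~\ref{cordegenerate}: once the singular set is nonempty, \emph{every} singular point gives a degenerate mixed-type $\Sigma_{out}$ by the argument above, and the invariance-of-domain reasoning (applied now with finitely many points removed) forces the singular set itself to be infinite. Your path-based construction via Theorem~\ref{zeno} runs into exactly the obstacle you flag, and your proposed fixes do not close it. The claim that the birth of the outermost-at-$h_N$ leaf is a transition of $\Sigma_{out}$ is unjustified: at the birth parameter $\tau$, the surface being born need not be outermost---another stable leaf, already present, may sit further out. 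The reflection symmetry gives you the outermost for the \emph{other} end, not a second transition for the same $\Sigma_{out}$. Finally, even granting ``two per path'', distinct $N$ need not give distinct bifurcation metrics (the paths all start at the same Fuchsian point and could share their first transitions), so letting $N\to\infty$ does not yield infinitely many distinct $h_0$. You should abandon the path argument and instead use the contrapositive of Proposition~\ref{continuous} as above.
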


Finally, we study the continuity of the area of the outermost.

\begin{proposition}\label{area}
The map $h\mapsto \operatorname{Area}(\Sigma_{out}(h))$ is upper semicontinuous.
\end{proposition}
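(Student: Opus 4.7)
The plan is to show that $\limsup_{h\to h_0}\operatorname{Area}_h(\Sigma_{out}(h)) \leq \operatorname{Area}_{h_0}(\Sigma_{out}(h_0))$ for every $h_0\in\mathcal{M}$. The strategy is to place a mean-convex barrier $\Sigma'_0$ slightly above $\Sigma_0:=\Sigma_{out}(h_0)$ that persists under small perturbations of the metric, locate $\Sigma_{out}(h)$ on the convex core side of this barrier for $h$ close to $h_0$, and then bound its area by invoking \Cref{theoremNegative}(2) together with \Cref{remarkArea}.

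First I would construct the barrier. The construction behind \Cref{theoremFoliation} places $\Sigma_0$ as the inner endpoint of a mean-convex foliation from the positive end, so $\Sigma_0$ admits a contracting halved tubular neighborhood on its outer side in $h_0$, in the sense of \Cref{defcontracting}. Given $\epsilon>0$, pick a leaf $\Sigma'_0$ of this halved tubular neighborhood close enough to $\Sigma_0$ that $\operatorname{Area}_{h_0}(\Sigma'_0) \leq \operatorname{Area}_{h_0}(\Sigma_0) + \epsilon/2$; this is possible because $\Sigma_0$ is minimal. Then $\Sigma'_0$ is incompressible, isotopic to the fiber, and strictly mean-convex in $h_0$ with mean curvature pointing toward $\Sigma_0$. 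By property (2)(b) of $\mathcal{M}$, for $h$ in a small neighborhood of $h_0$ the metric is smoothly close to $h_0$ on a fixed compact set containing $\Sigma'_0$, so $\Sigma'_0$ remains strictly mean-convex in $h$ with the same orientation, and $\operatorname{Area}_h(\Sigma'_0) \leq \operatorname{Area}_{h_0}(\Sigma_0) + \epsilon$.

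The main obstacle is showing that for $h$ sufficiently close to $h_0$, $\Sigma_{out}(h)$ lies strictly on the convex core side of $\Sigma'_0$. Suppose not; then a subsequence $h_i\to h_0$ has $\Sigma_{out}(h_i)$ meeting the closed region $\overline R$ between $\Sigma'_0$ and the positive end. Combining the Gauss--Bonnet-type bound $\operatorname{Area}(\Sigma_{out}(h_i))\leq 4\pi(g-1)$ with the curvature estimates for stable minimal surfaces \cite{Schoen} and the uniform containment in a fixed compact set (property (2)(d)), a further subsequence converges smoothly to a stable minimal surface $\Sigma^\ast$ in $(S\times\R,h_0)$ meeting $\overline R$. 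The multiplicity of the convergence must be one, since a higher multiplicity would realize $\Sigma_{out}(h_i)$ as a connected multiple cover of $\Sigma^\ast$, contradicting that $\Sigma_{out}(h_i)$ belongs to the primitive fiber class $[S]\in H_2(S\times\R)$. But $\Sigma^\ast$ cannot sit strictly above $\Sigma_0$ by outermostness of $\Sigma_0=\Sigma_{out}(h_0)$ in $h_0$, and cannot touch $\Sigma'_0$ by the maximum principle (since $\Sigma'_0$ is not minimal), a contradiction.

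Finally, let $K\subset(S\times\R,h)$ be the compact region between $\Sigma'_0$ and $\Sigma_{out}(h)$. By construction, $\Sigma'_0$ is a strictly mean-convex component of $\partial K$ with mean curvature pointing into $K$, and $\Sigma_{out}(h)$ is a minimal one, so \Cref{theoremNegative}(2) yields a strictly monotone mean-convex isotopy from $\Sigma'_0$ to a stable minimal surface $\Gamma\subset K$. Outermostness of $\Sigma_{out}(h)$ rules out minimal surfaces in $\operatorname{int} K$, forcing $\Gamma\subset\partial K$, and hence $\Gamma=\Sigma_{out}(h)$ since $\Sigma'_0$ is not minimal. Then \Cref{remarkArea} gives $\operatorname{Area}_h(\Sigma_{out}(h)) \leq \operatorname{Area}_h(\Sigma'_0) \leq \operatorname{Area}_{h_0}(\Sigma_0)+\epsilon$, and letting $\epsilon\to 0$ completes the argument.
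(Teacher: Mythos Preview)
Your proof is correct, and it takes a genuinely different route from the paper's. The paper argues on the \emph{inner} side of $\Sigma_{out}(h_0)$: at a discontinuity it uses the mixed tubular neighborhood to run the flow inward, producing a finite chain of stable surfaces ending at a locally area-minimizing $\Sigma$; this $\Sigma$ is then used to trap the limit of $\Sigma_{out}(h_i)$ among the finitely many surfaces in that chain, all of which have area at most $\operatorname{Area}(\Sigma_{out}(h_0))$. You instead work on the \emph{outer} side: you place a single mean-convex barrier $\Sigma'_0$ just above $\Sigma_{out}(h_0)$, show by compactness of stable minimal surfaces plus outermostness that $\Sigma_{out}(h)$ must lie below $\Sigma'_0$, and then bound the area directly via the monotone area decrease of the isotopy from \Cref{theoremNegative}(2) and \Cref{remarkArea}. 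Your approach avoids the case split (continuous/discontinuous), avoids the mixed-neighborhood analysis, and does not invoke the finiteness of stable surfaces from \Cref{theoremAnderson2}; the paper's approach, on the other hand, yields the extra structural information that any subsequential limit of $\Sigma_{out}(h_i)$ is one of a specific finite list of minimal surfaces between $\Sigma$ and $\Sigma_{out}(h_0)$.

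Two small remarks on your write-up. In Step~3, the sentence ``cannot touch $\Sigma'_0$ by the maximum principle'' is slightly imprecise since $\Sigma'_0$ is not minimal; the clean statement is that the entire region outward of $\Sigma_0$ is foliated by strictly mean-convex leaves in $h_0$, so any closed minimal surface in $h_0$ lies at or below $\Sigma_0$ and hence misses $\overline R$. Your multiplicity-one argument via the primitive homology class is fine, but the simpler phrasing (also used in the paper for \Cref{continuous}) is that $\Sigma^\ast$ is two-sided in $S\times\R$, so a higher-multiplicity graphical limit would force $\Sigma_{out}(h_i)$ to be disconnected.
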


\begin{proof}
If $\Sigma_{out}$ is continuous at $h_0$ there is nothing to prove. Assume that $h_0$ is a point of discontinuity for the outermost and let $\{h_i\}_{i\in\N}$ be a sequence of quasi-Fuchsian metrics converging to $h_0$. Since $\Sigma_{out}(h_0)$ has a mixed tubular neighborhood, we can find another minimal surface below it using the level set flow (e.g. in \Cref{figQF} we would find III running the flow from IV). Notice that the area of this surface is strictly less than $\Sigma_{out}(h_0)$. Since there are only finitely many stable surfaces, we can repeat this process until we reach a locally area-minimizing surface $\Sigma$. For $i$ large enough, the metrics $h_i$ all contain a locally area minimizing surface near $\Sigma$. In particular, the outermost $\Sigma_{out}(h_i)$ has to be deeper in the end. Therefore, as $i\to\infty$, $\Sigma_{out}(h_i)$ must converge to a surface deeper in the end compared to $\Sigma$. By maximum principle it must be one of the surfaces we just constructed by running the flow from $\Sigma_{out}(h_0)$. It follows that $$\lim_{i\to\infty}\operatorname{Area}(\Sigma_{out}(h_i))\leq  \operatorname{Area}(\Sigma_{out}(h_0)).$$

\end{proof}

\bibliography{references}
\bibliographystyle{siam}

\end{document}